\documentclass[11pt, reqno]{amsart}
\usepackage{amsmath}
\usepackage{amsthm}
\usepackage{amssymb}
\usepackage{amscd}
\usepackage{amsbsy}
\usepackage{epsfig}
\usepackage{graphicx}
\usepackage{psfrag}
\usepackage{bbm}
\usepackage{hyperref}
\usepackage{mathrsfs}
\usepackage[UKenglish]{babel}
\usepackage[UKenglish]{isodate}
\usepackage{cleveref}
\usepackage{todonotes}
\usepackage{soul}
\usepackage{xcolor}
\usepackage{enumitem}

\theoremstyle{plain}
\newtheorem{theorem}{Theorem}[section]

\newtheorem{remark}[theorem]{Remark}
\newtheorem{proposition}[theorem]{Proposition}
\newtheorem{lemma}[theorem]{Lemma}

\theoremstyle{definition}
\newtheorem{definition}[theorem]{Definition}

\newtheorem*{claim*}{Claim}
\newtheorem*{notation*}{Notation}

\newtheorem*{lemma*}{Lemma}

\theoremstyle{definition}

\def\R{\ensuremath{\mathbb R}}
\def\N{\ensuremath{\mathbb N}}

\def\U{\ensuremath{\mathcal U}}

\def\P{\ensuremath{\mathcal P}}

\def\F{\ensuremath{\mathcal F}}

\def\ie{{\em i.e.}, }

\def\E{\mathbb E}

\def\st{such that }

\def\ulx{\underline{x}}

\def\uly{\underline{y}}

\def\ulu{\underline{u}}
\def\i{\textbf{i}}

\numberwithin{equation}{section}

\begin{document}

\author[S.~Shaabanian]{Saeed shaabanian}
\thanks{I would like to thank my supervisor, Prof.\ Mike Todd, for their invaluable guidance, insightful comments, and continued support throughout this research.
I also thank Dr.\ Natalia Jurga for their thoughtful guidance and constructive feedback.}
\address{Mathematical Institute\\
University of St Andrews\\
North Haugh\\
St Andrews\\
KY16 9SS\\
Scotland} 
\email{ss507@st-andrews.ac.uk}
\urladdr{https://ss5071.wixsite.com/saeed-shaabanian}

\title{ Cover time for countable Markov shifts}
\keywords{Cover time, Symbolic dynamics, Gibbs measures}
\subjclass[2020]{37A25, 37B10, 37D35}
\date{\today}
\maketitle

\begin{abstract}
Cover time, in the context of dynamical systems, quantifies the rate at which orbits cover the system. We prove that for countable full shifts with a Gibbs measure, equipped with a natural metric, the rate of covering of orbits of points behaves according to the minimum measure of balls. Moreover, this rate exhibits sensitivity to changes in the metric.
\end{abstract}

\section{Introduction}\label{Sec:Int}

Assume that $(Y,d)$ is a compact metric space and let $X\subseteq Y$. Consider $(f,\mu)$ where $f:X\to X$ and $\mu$ is an ergodic probability measure.

For such a dynamical system, assume that $X=\overline{\{f^nx\}}_n$ for $\mu$ a.e. $x\in X$. We refer to this property as \emph{covering}. The question that arises is, for the point $x$, when does covering to some scale $\delta$ occur for the first time?
This question can be posed as follows.

\begin{definition}
    Let $\delta>0$. We say $A$ is \textit{$\delta$-dense} in $B$ if for all $ b\in B$, there exists $ a\in A$ such that $ d(a,b)\le \delta.$
\end{definition}

Cover time quantifies the rate at which orbits become dense in $X$:
\begin{definition}
    Let $\delta>0$. The \textit{$\delta$-covering time} is the function $\tau_\delta:X\to \N \cup \{\infty\}$ such that 
    \begin{equation}
        \tau_\delta(x):=\inf \{n\ge 1 : \{x, f(x),\ldots,f^n(x)\}\; \text{is} \; \delta\text{-dense in} \; X\}.
    \end{equation}
\end{definition}
A natural question here is, what is the asymptotic behaviour of the expected cover time in $(f,\mu)$ i.e. $\E_\mu (\tau_\delta)$ as $\delta \to 0$?

Cover time has been well studied in probability, such as in random-walk dynamics on graphs (\cite{Aldo1}, \cite{Lova} and \cite{DingLeePe}), Markov chains (\cite{LeviPer}), and stochastic geometry (\cite{FlaNew}, \cite{Janson}, \cite{Pesin} and \cite{Aldo2}). Recently, results have also been obtained in the chaos game using the cover time in \cite{JurgMorr, BarJurKol}. This concept was first studied by Jurga and Todd in the dynamical systems context \cite{MikeNatali}.
They proved for a large class of interval maps that $\E_\mu (\tau_\delta)$ is roughly like $\delta^{-\dim_M\mu}$  as $\delta\to 0$, where
\begin{equation}\label{Def: Minkowski}
\dim_M\mu:=\lim_{\delta\to0}\frac{\log M_{\mu}(\delta)}{\log\delta},\end{equation}
exists and is called the \textit{Minkowski dimension} of $\mu$  where $$M_{\mu}(\delta):=\min_{x\in\text{supp}(\mu)}\mu(B(x,\delta)).$$ The Minkowski dimension was first introduced in \cite{KennethJon} and by \cite[Proposition 4.2]{KennethJon} it is equal to the $L^{-\infty}$ dimension.

The sharp result in \cite{MikeNatali} is obtained for Gibbs Markov interval maps when the map has at least two full branches. In this case, \cite[Theorem 2.1]{MikeNatali} shows that $\E_{\mu} (\tau_\delta)$ is approximately like $ \delta^{-\dim_H\Lambda}\left(\log\frac{1}{\delta}\right)$, where $\dim_H\Lambda$ is the Hausdorff dimension of the repeller.
To obtain these results, they used the symbolic dynamics associated with interval maps. This observation motivated us to pursue this question for a symbolic system equipped with an appropriate metric. 

Assume that $(\Sigma,\sigma)$ is a countable full shift which admits a Gibbs measure. Since the usual metric here is not totally bounded and would yield $\E_{\mu} (\tau_\delta)=\infty$ for $\delta<1$, we use a metric with polynomial or exponential behaviour (see \eqref{Def:Metric d} and \eqref{Def:Metric d_2}). For such systems, we obtain the bounds for $\E_{\mu} (\tau_\delta)$. To this end, we first construct a minimal finite disjoint open cover of the space (see Proposition \ref{Lemma:Totally bounded in Full shift}), whose number of elements affects the limits of $\E_{\mu} (\tau_\delta)$, which helps us to obtain an upper bound for $\E_{\mu} (\tau_\delta)$.

One of the main tools here is the $\psi$-mixing property, which holds due to the fact that the measure is a Gibbs measure. This property can also play a key role through one of the standard recurrence rates in dynamical systems, namely the first hitting time.
\begin{definition}\label{Def: hitting time}
    Let $\delta>0$ and $U\subset \Sigma$. The \textit{first hitting time} of $\ulx\in \Sigma$ to $U$ is $\tau_U:\Sigma\to\N\cup\{\infty\}$ such that,
    $$\tau_U(\ulx)=\inf\left\{n\ge1 : \sigma^n(\ulx)\in U\right\}.$$
\end{definition}
Many results have been obtained regarding the behaviour of the hitting time. For background on hitting time statistics in dynamical systems, see \cite[Chapter 5]{Mike's book}, and for extensive information on hitting time in symbolic dynamical systems, see \cite{Coelho} and \cite{GalSch}. 

As a step in the proof of our main results, we also show that for $U$ a union of cylinders of the same depth, then $\E_\mu(\tau_U)$ is of order $1/\mu(U)$, see Theorem \ref{Lemma: upper bound for E(hitting time)}. The key issue here is that we require uniform constants, like in the case of intervals in \cite[Theorem 4.1]{MikeNatali}. We note that Kac Lemma says that $\E_{\mu_U}(\tau_U)=1/\mu(U)$, and our proof involves estimating $\E_\mu(\tau_U)$ in terms of $\E_{\mu_U}(\tau_U)$.

There is a close connection between hitting time and cover time. We use the mixing property of the system to exploit this relation, and then the problem of determining the covering behaviour of the system by orbits reduces to studying the hitting time behaviour on a finite open cover of the countable full shift. Then make use of some lemmas that were proved in \cite{MikeNatali} and that remain valid in our setting. These lemmas provide bounds for $\E_{\mu} (\tau_\delta)$ in terms of $\E_{\mu} (\tau_U)$ for suitable $U$. Then, using the estimated bounds obtained for $\E_{\mu} (\tau_U)$ and properties of the finite open cover, we derive the desired bounds for $\E_{\mu} (\tau_\delta)$.

Finally, we prove that for countable full shifts endowed with the Gibbs property, equipped with a suitable totally bounded metric, the rate of $\delta$-covering of orbits is variable and its range changes with respect to each metric. As we consider two metrics with different behaviours for the system in question: one exhibiting polynomial behaviour and the other exponential behaviour, it is shown that the expected time for the orbit of a point to cover the system equipped with the exponential metric is smaller than that for the same system equipped with polynomial behaviour. Note that, in this setting, obtaining a sharp lower bound, as done in  \cite[Theorem 2.1]{MikeNatali} (see Theorem \ref{Th:MN_sharp_theorem}), is out of reach, since the conditions for having a sharp lower bound in that theorem do not hold for a system with a Gibbs measure and the metrics under consideration (see Remark \ref{Remark: No Sharp Bound}).

\subsection{Layout of the paper.} Section \ref{Subsection: Cover time on Interval Maps} provides a brief review of the results obtained in \cite{MikeNatali}. In addition to presenting definitions that will be useful later, we restate the main results of that work. In Section \ref{Sec: CFS and main Thm}, we describe the countable full shift, examine its properties, and state our main theorems. Section \ref{Section: Propositions} is devoted to the results for the countable full shift equipped with a polynomial metric. We introduce a minimal finite open cover and derive an estimate for the first hitting time of the elements for this cover. Then we establish the proof of the results associated with this system. Section \ref{Section: exponential metric} is related to the results for the countable full shift with respect to an exponential metric. Here, we follow the same approach as in the previous section. Section \ref{Section: Examples} presents an example of this work.

\textbf{Notation}. We denote the cardinality of the set $A$ by $\#A$. We use $o(A_n)=B_n$, which means $\lim_n \frac{B_n}{A_n}=0$. So, $o(1)$ denotes a term which finally tends to zero. Also, $B_n=\mathcal{O}(A_n)$ means that there is some $C>0$ \st $B_n\le C A_n$. 

\section{Cover time for interval maps} \label{Subsection: Cover time on Interval Maps}
Let $I\subset \R$ be a bounded interval and $\mathcal{Z}=\{Z_i\}_{i\in \mathcal{I}}$ be at most a countable collection of subintervals of $I$ with disjoint interiors. Suppose that $f:\bigcup_{n\in \mathcal{I}}Z_n\to I$ is continuous and strictly monotone on each $Z_n$. Set $$\Lambda := \left\{x \in I : f^k(x) \in \bigcup_{n\in\mathcal{I}} Z_n \;\text{for all} \;k\ge 0 \right\}.$$

 Define a nonatomic Borel probability measure $m$ such that $m(D)=0 $ where $D=I\setminus \bigcup_i(Z_i)$, which is conformal with respect to a potential $\phi:\Lambda \to \R$ (i.e. $\frac{dm}{d(m\circ f)}=e^{\phi}$).

One of the most important features of $f$ is its symbolic coding, which will play a key role for us later on. Let $\mathcal{Z}^n$ be the set $n$-cylinders, i.e. this is the set of maximal intervals $Z$ of $I$ \st $f^k(x)\subset Z_{i_k}$ for some $Z_{i_k}\in \mathcal{Z}$ for $k\in \N_0$. Let $\Sigma' \subset \mathcal{I}^{\N}$ be the subshift given by the set of sequences $\textbf{i}=(i_0,i_1,\ldots)\in \mathcal{I}^{\N_0}$ for which there exists an $x\in \Lambda$ such that $f^n(x)\in Z_{i_n}$ for all $n\in \N_0$. Define the projection $\Pi:\Sigma'\to I$ such that,
$$\Pi(i_0,i_1,\ldots)=\lim_{n\to\infty}f^{-1}_{i_0} \circ \cdots \circ f^{-1}_{i_{n-1}}(I)=x,$$ where $f_i=f|_{Z_i}$. Then $f\circ \Pi=\Pi \circ \sigma$ where $\sigma: \Sigma' \to \Sigma'$ is the left shift map.
Denote $\tilde{\mu}$ to be the measure on $\Sigma'$ such that $\mu=\Pi_*\tilde{\mu}$, $\Sigma'_n$ the set of all words of length $n$ in $\Sigma'$ and $\Sigma'^*$ the set of all finite words in $\Sigma'$. Also for $w\in\Sigma'^*$ we write $[w]=\{\textbf{i}\in\Sigma': \textbf{i}|n=w\}$ where $\textbf{i}|n=(i_0,\ldots,i_{n-1})$ if $\textbf{i}=(i_n)_{n\in\N_0}.$

The following two assumptions for $\tilde{\mu}$ are used as conditions of the main theorems in \cite{MikeNatali}. First $\tilde{\mu}$ is quasi-Bernoulli i.e. there exists $C_*>1$ such that for all finite words $\textbf{i}, \textbf{j} \in \Sigma'^*, C_*^{-1}\tilde{\mu}([\textbf{i}])\tilde{\mu}([\textbf{j}])\le \tilde{\mu}([\textbf{i}\textbf{j}])\le C_*\tilde{\mu}([\textbf{i}])\tilde{\mu}([\textbf{j}])$, and the second assumption is $\tilde{\mu}$ is $\psi$-mixing, i.e. 
\begin{equation}\label{Equ:psi mixing}
    \left|\frac{\tilde{\mu}(\cup[x_0,\ldots,x_{i-1},y_0,\ldots,y_{j-1},z_0,\ldots,z_{l-1}]}{\tilde{\mu}([x_0,\ldots,x_{i-1}])\tilde{\mu}([z_0,\ldots,z_{l-1}])}-1\right|\le \psi(j),
\end{equation}
where the union is taken over all words $y_0, \ldots, y_{j-1}$ of length $j$ such that $$(x_0,\ldots,x_{i-1},y_0,\ldots,y_{j-1},z_0,\ldots,z_{l-1})\in\Sigma'_{i+j+l},$$ and $\psi(j)\to 0$ as $j\to \infty.$ 

We introduce a class of systems that satisfy the conditions of the theorems in \cite{MikeNatali} (see Remark\ref{Remark:Cond}).

\begin{definition}\label{Def:Gibbs}
   Given a Lipschitz potential $\phi:\Sigma\to \R$, $\mu$ (or $\tilde{\mu}$)  is \textit{Gibbs} if there exist constants $K,P$ such that for all $i_0,\ldots,i_{n-1}\in\Sigma'_n$ and $\textbf{i}\in[i_0,\ldots,i_{n-1}],$ $$ K^{-1}\le\frac{\tilde{\mu}([i_0,\ldots,i_{n-1}])}{e^{S_n\phi(\Pi(\textbf{i}))+nP}}\le K,$$
    where $S_n\phi(x)=\sum_{i=0}^{n-1} \phi \circ f^i(x).$
\end{definition}

\begin{definition}\label{Def:BIP}
    $f:\Lambda\to\Lambda$ is  \textit{Markov}, if for each $Z\in\mathcal{Z}, f(Z)$ is a union of elements of $\mathcal{Z}$. Also, we say that $f$ satisfies the \textit{big images and pre-images property (BIP)} if there exists a finite set $\{Z_j\}_{j\in\mathcal{J}}\subset\mathcal{Z}$ such that,
    $$\forall Z\in\mathcal{Z}, \exists j,k\in \mathcal{J} : Z\subseteq f(Z_j),Z_k\subseteq f(Z).$$

\end{definition}

\begin{definition}
    We say that $(f,\mu)$ is \textit{Gibbs-Markov} with BIP if $f$ is Markov and satisfies BIP and $\mu$ is Gibbs.
\end{definition}

 In the following a condition of the existence of a pairwise disjoint finite open cover is used in \cite{MikeNatali}. We mention it here since to obtain the result under discussion in this note we use an analogous cover.

\label{Condition (U)}\textbf{Condition (U).} We say that the system $(f,\mu)$ satisfies \textbf{(U)} if there exists $\delta_0>0$ such that for all $0<\delta\le \delta_0$ we can find a finite collection $\mathcal{U}_{\delta}$ of cylinders such that

(U)(a) there exists $0<t<1<T$ such that for all elements $U$ in $\mathcal{U}(\delta_0):=\bigcup_{0<\delta\le \delta_0}\bigcup\left\{U:U\in \mathcal{U}_{\delta}\right\} $, there exists $x\in\Lambda$ such that $B(x,t\delta)\subseteq  U \subseteq B(x,T\delta)$,

(U)(b) for all $0<\delta\le \delta_0$ the cylinders in $\mathcal{U}_{\delta}$ are pairwise disjoint,

(U)(c) for all $0<\delta\le \delta_0$, $\Lambda$ is covered by $\bigcup_{U\in\mathcal{U}_{\delta}}U$,

(U)(d) for all $U\in \mathcal{U}_{\delta}$, $U\cap\Lambda= \bigcup_{\textbf{i}\in\P_{\delta}}[\textbf{i}]$ where $\P_{\delta}\subset\Sigma^*$ is an at most countable collection of words \textbf{i} with the property that $|\textbf{i}|=\mathcal{O}(\log(\frac{1}{\delta}))$ (where the implied constant is uniform over all $\delta$).

Now, the main theorem in \cite{MikeNatali} can be stated for $(f,\mu)$.
\begin{theorem}\label{Th:MN_main_theorem}
    \cite[Theorem 2.2]{MikeNatali} Assume that $(f,\mu)$ satisfies condition \textbf{(U)} and some other conditions (described in Remark \ref{Remark:Cond}), then there exist $0<c<C<\infty$ and $\varepsilon>0$ such that for all $\delta>0$,
    $$\frac{c}{M_\mu(\frac{\delta}{\varepsilon})}\le\E_\mu(\tau_\delta)\le \frac{C}{M_\mu(\varepsilon \delta)}\left(\log\frac{1}{\delta}\right).$$
    In particular if $\dim_M\mu<\infty$ then
    $$c\delta^{-\dim_M\mu+\text{Err}(\frac{\delta}{\varepsilon})}\le \E_\mu(\tau_\delta)\le C\delta^{-\dim_M\mu-\text{Err}(\varepsilon\delta)}\left(\log\frac{1}{\delta}\right),$$
    where $\text{Err}(\delta):=\left|\dim_M\mu-\frac{\log(M_\mu(\delta))}{\log\delta}\right|.$
\end{theorem}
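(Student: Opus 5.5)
The approach is the standard hitting-time/cover-time comparison. Throughout, the covers $\mathcal{U}_\delta$ come from Condition \textbf{(U)}, and $\psi$-mixing together with the quasi-Bernoulli property supply uniform mixing constants.

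\emph{Lower bound.} Take a point $x\in\operatorname{supp}\mu$ with $\mu(B(x,\delta/\varepsilon))\approx M_\mu(\delta/\varepsilon)$. If the orbit segment $\{\ulx,\ldots,f^n\ulx\}$ is $\delta$-dense, then some $f^k\ulx$ with $k\le n$ lies in $B(x,\delta)$, so $\tau_\delta\ge \tau_{B(x,\delta)}$. It therefore suffices to show $\E_\mu(\tau_U)\ge c/\mu(U)$ for $U$ comparable to a ball. This follows from the union bound $\mu(\tau_U\le n)\le n\mu(U)$, by taking $n=1/(2\mu(U))$. Condition (U)(a) is used to pass between the ball $B(x,\delta)$ and a cylinder union, at the cost of the factor $\varepsilon$.

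\emph{Upper bound.} Fix $\delta$ and let $\mathcal{U}=\mathcal{U}_{t'\delta}$ be a cover by pairwise disjoint cylinder unions, each of diameter at most $\delta$ by (U)(a). Visiting every $U\in\mathcal{U}$ then forces $\delta$-density, so $\tau_\delta\le \max_{U\in\mathcal{U}}\tau_U$. Each $U$ contains a ball of radius $t t'\delta$, so $\mu(U)\ge M_\mu(\varepsilon\delta)$.

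The key estimate is a uniform exponential tail:
\[
\mu(\tau_U> kN)\le (1-\theta)^k
\]
with $N\approx C'/\mu(U)$ and $\theta>0$ independent of $U$ and $\delta$. I would prove this by splitting time into blocks of length $N$ separated by gaps of length $g$ with $\psi(g)<1/2$. Within a block, a second-moment (Chung--Erdős) bound gives a hitting probability of at least $\theta$, using that the depth of $U$ is $O(\log 1/\delta)$ by (U)(d) so that short-return correlations are controlled. Across blocks, $\psi$-mixing makes the events nearly independent, so the probabilities multiply.

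Summing the tails with a union bound over the $\#\mathcal{U}\lesssim 1/M_\mu(\varepsilon\delta)$ elements gives
\[
\E_\mu\Big(\max_{U\in\mathcal{U}}\tau_U\Big)\le N\Big(\log\#\mathcal{U}+O(1)\Big)\lesssim \frac{1}{M_\mu(\varepsilon\delta)}\log\frac1\delta .
\]
Here $\log\#\mathcal{U}=O(\log 1/\delta)$ because $\mu(U)$ decays at most polynomially, by the Gibbs property and (U)(d). The ``in particular'' statement then follows immediately from the definition of $\operatorname{Err}$.

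The main obstacle is making $\theta$ uniform in $U$. Short returns of $U$ to itself, at times below its depth, can spoil the second-moment bound. I would handle these using the quasi-Bernoulli property, which bounds $\mu(U\cap f^{-j}U)\le C\mu(U)\,\mu(\text{tail cylinder})$, and then sum over $j$ up to the depth.
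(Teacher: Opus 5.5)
Your proposal is sound in outline, but it follows a genuinely different route from the one the paper relies on (Jurga--Todd).

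\textbf{Comparison of the two routes.} In the paper's route, both bounds go through the two-sided estimate $c/\mu(U)\le\E_\mu(\tau_U)\le C/\mu(U)$ for cover elements. This estimate comes from comparing $\mu_U(\tau_U\ge n)$ with $\mu(\tau_U\ge n)$ via $\psi$-mixing and then summing with Kac's lemma (cf.\ the proof of Theorem~\ref{Lemma: upper bound for E(hitting time)}). The lower bound then uses $\tau_\delta\ge\tau_U$ for a cover element of small measure. The upper bound uses the Matthews-type Lemma~\ref{Lemma:subpartition}, $\E_\mu(\tau_{\mathcal U})\le C\bigl(L+\max_m\E_\mu(\tau_{U_m})\bigr)\sum_{k\le\#\mathcal U}1/k$.

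Your lower bound replaces all of this by the first-moment estimate $\mu(\tau_B\le n)\le n\mu(B)$, which needs neither mixing nor (U). Applied directly to a ball of near-minimal measure, it gives $\E_\mu(\tau_\delta)\ge c/M_\mu(\delta)\ge c/M_\mu(\delta/\varepsilon)$, so your appeal to (U)(a) there is superfluous.

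Your upper bound replaces Kac plus Matthews by a uniform exponential tail and a union bound over $\mathcal U$. This gives more, namely tail estimates for $\tau_U$ and hence for $\tau_\delta$. The price is that you need second-moment control of short returns, whereas the Kac comparison only needs the probability of an early return to be at most a uniform $\tilde\theta<1$.

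\textbf{The step that is wrong as justified.} $\mu(U)$ need not decay polynomially in $\delta$. With countably many branches, $\phi$ is unbounded below, so the Gibbs property gives no polynomial lower bound on cylinder measures. The theorem itself allows $\dim_M\mu=\infty$. The Jurga--Todd example quoted at the end of the paper has $\E_\mu(\tau_\delta)\ge 2^{\sqrt{1/(c\delta)}}$, which together with the upper bound forces $M_\mu(\varepsilon\delta)$ to be super-polynomially small. Counting by measure only gives $\#\mathcal U\le 1/M_\mu(\varepsilon\delta)$, hence a factor $\log(1/M_\mu(\varepsilon\delta))$ rather than $\log(1/\delta)$.

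The correct reason is geometric. The elements of $\mathcal U$ are pairwise disjoint, and each contains a ball of radius $tt'\delta$ about a point of $\Lambda$. Hence these centres are $tt'\delta$-separated points of the bounded interval $I$, and $\#\mathcal U=O(1/\delta)$. This count is exactly where the interval case differs from the countable full shift of this paper, where $\#\mathcal U_\delta$ is far larger and produces $(\log\frac1\delta)^2$.

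\textbf{Two smaller repairs.}
\begin{enumerate}
\item For a union of cylinders there is no single ``tail cylinder''. What quasi-Bernoulli gives is $\mu(U\cap f^{-j}U)\le C_*\mu(U)\,\mu(\widehat U_j)$, where $\widehat U_j$ is the union of $j$-cylinders meeting $U$. The bound $\mu(\widehat U_j)\le\mu(U)+2\max_{Z\in\mathcal{Z}^j}\mu(Z)$ uses that $U$ is an interval, so only the two end cylinders straddle its boundary. Together with exponential decay of $\max_{Z\in\mathcal{Z}^j}\mu(Z)$, this yields $\sum_{j<N}\mu_U(f^{-j}U)\le C_*\bigl(N\mu(U)+O(1)\bigr)$, which is what Paley--Zygmund needs.
\item The event ``no visit during a block'' depends on $L-1$ further coordinates, where $L$ is the depth. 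So the gaps must have length $L+g$, and $g$ must satisfy $(1+\psi(g))(1-\theta)<1$ rather than merely $\psi(g)<1/2$. The resulting additive $L\log\#\mathcal U$ term is harmless and mirrors the $L$ in Lemma~\ref{Lemma:subpartition}.
\end{enumerate}
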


\begin{remark}\label{Remark:Cond}
\begin{enumerate}
    \item Three conditions relate to the regularity properties of $\phi$. Two conditions are that $\tilde{\mu}$ satisfies Definitions \ref{Def:Gibbs} and \ref{Def:BIP}, and the other conditions, e.g. control over quantities such as ratio of derivatives of iterates of $f$, ratio of adjacent cylinders which all can be found in the \cite{MikeNatali}, and we have omitted here for brevity.
   \item The class of Markov maps satisfying the regularity properties of $\phi$, Definitions \ref{Def:Gibbs} and \ref{Def:BIP}, and also condition (a4) mentioned in \cite[p4]{MikeNatali}, includes the class of Gibbs-Markov maps with BIP.
    \end{enumerate}
\end{remark}

The following property of the measure helps to obtain a sharp result. The measure $\mu$ is \textit{Ahlfors regular}, if there exists $c>0$ \st for all $x\in \Lambda$ and $r>0$, $$\frac{r^{\dim_H\Lambda}}{c}\le\mu(B_r(x))\le cr^{\dim_H\Lambda},$$  where $\dim_H\Lambda$ is the Hausdorff dimension of $\Lambda$. Note that the Lebesgue measure has this property on intervals or, more generally, a conformal measure, i.e., a measure which satisfies condition $\frac{d(\mu \circ f)}{d\mu}=e^{-\phi}$ with potential $\left(-\dim_H\Lambda\right)\log \left|D_f\right|$, may satisfy this.

\begin{theorem}\label{Th:MN_sharp_theorem}
    \cite[Theorem 2.1]{MikeNatali} Assume that $(f,\mu)$ satisfies condition \textbf{(U)} and some other conditions (described in Remark \ref{Remark:Cond}), and also $\mu$ is \textit{Ahlfors regular}, then there exist $0<c<C<\infty$ such that for all $\delta>0$,
    $$c \delta^{{-\dim_H\Lambda}} \le\E_\mu(\tau_\delta)\le C \delta^{{-\dim_H\Lambda}} \left(\log\frac1{\delta}\right).$$
   Moreover, if the system is Gibbs Markov and $f$ has at least 2 full branches, then we also have the sharp lower bound,
    $$c \delta^{{-\dim_H\Lambda}} \left(\log\frac1{\delta}\right)\le\E_\mu(\tau_\delta)\le C \delta^{{-\dim_H\Lambda}} \left(\log\frac1{\delta}\right).$$
  
\end{theorem}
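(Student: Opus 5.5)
The statement is quoted from \cite[Theorem 2.1]{MikeNatali}. I would prove the first part (the non-sharp two-sided bound) as a corollary of Theorem \ref{Th:MN_main_theorem}, and the sharp lower bound by a separate coupon-collector argument.

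\textbf{Step 1: the non-sharp bounds.} Write $d=\dim_H\Lambda$. Ahlfors regularity gives, for every $r>0$,
$$c^{-1}r^{d}\le M_\mu(r)\le c\,r^{d}.$$
Substituting $r=\varepsilon\delta$ into the upper bound of Theorem \ref{Th:MN_main_theorem} gives
$$\E_\mu(\tau_\delta)\le C c\,\varepsilon^{-d}\,\delta^{-d}\log\tfrac1\delta.$$
Substituting $r=\delta/\varepsilon$ into the lower bound gives
$$\E_\mu(\tau_\delta)\ge \tfrac{c'}{c}\,\varepsilon^{d}\,\delta^{-d}.$$
Renaming constants yields the first display. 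Only the dependence of $\varepsilon$ on the system is used, not on $\delta$, so this step is routine.

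\textbf{Step 2: reducing the sharp lower bound to simultaneous hitting.} The upper bound is already proved in Step 1, so only the lower bound $\E_\mu(\tau_\delta)\ge c\,\delta^{-d}\log\frac1\delta$ remains. I would build a family of $N\asymp\delta^{-d}$ pairwise disjoint cylinders $U_1,\dots,U_N$ using words over the two full branches, with the following properties.
\begin{enumerate}
\item Each $U_j$ contains a ball $B(x_j,\delta)$ and has $\mu(U_j)\asymp\delta^{d}$. This uses Ahlfors regularity and bounded distortion; the depth of each word is $\mathcal{O}(\log\frac1\delta)$, as in condition (U)(d).
\item The balls $B(x_j,2\delta)$ are pairwise disjoint, so any $\delta$-dense orbit segment must enter every $B(x_j,\delta)\subset U_j$.
\end{enumerate}
Consequently, if $\tau_\delta(x)\le n$ then every $U_j$ is visited by the orbit of $x$ before time $n$. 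It therefore suffices to show that, for $n=\eta\,\delta^{-d}\log\frac1\delta$ with $\eta$ small,
$$\mu\bigl(\text{all }U_j\text{ visited before time }n\bigr)\le\tfrac12.$$
Markov's inequality then gives $\E_\mu(\tau_\delta)\ge n/2$.

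To prevent short returns from distorting hitting-time statistics, I would choose the words to be non-self-overlapping, for example of the form $1\,2^{k}\,w\,1$. Full branches allow free concatenation, so this restriction costs only a constant factor in $N$.

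\textbf{Step 3: a second-moment estimate (the main obstacle).} Let $Z$ be the number of $U_j$ not visited by time $n$. For the first moment, the $\psi$-mixing condition \eqref{Equ:psi mixing}, applied to time blocks separated by gaps of length $g\asymp\log\frac1\delta$, should give
$$\mu(\tau_{U_j}>n)\ge \tfrac12 e^{-Cn\mu(U_j)}\ge\tfrac12\delta^{C'\eta}.$$
Hence $\E Z\gtrsim\delta^{-d+C'\eta}\to\infty$ once $\eta<d/C'$.

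For the variance I need
$$\mu(\tau_{U_i}>n,\ \tau_{U_j}>n)\le(1+o(1))\,\mu(\tau_{U_i}>n)\,\mu(\tau_{U_j}>n)$$
uniformly over pairs $i\neq j$. Chebyshev's inequality then gives $\mu(Z=0)\to0$, which is stronger than the bound required in Step 2. The hard part is making these decorrelation estimates uniform over roughly $\delta^{-2d}$ pairs while $n$ is much larger than the mixing scale.

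My first attempt would be a blocking scheme. Cut $[0,n]$ into blocks of length $L\asymp\delta^{-d/2}$, separated by gaps of length $g$. Then:
\begin{itemize}
\item use quasi-Bernoulli and $\psi$-mixing (which is exponential for Gibbs measures) to factor the events across blocks, with multiplicative error $(1+\psi(g))^{n/L}=1+o(1)$;
\item inside each block, use inclusion–exclusion to get $\mu(\text{block hits }U_i\cup U_j)\approx L(\mu(U_i)+\mu(U_j))$, where the non-overlap choice of words suppresses clustering;
\item control the visits lost in the gaps by a union bound of size $(n/L)\,g\,\delta^{d}=o(1)$.
\end{itemize}
If uniformity fails for some pairs, for example those sharing long prefixes, I would discard them by choosing the $U_j$ to differ in their first symbols.
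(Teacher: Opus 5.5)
The paper gives no proof of this statement. It quotes it from \cite[Theorem 2.1]{MikeNatali}, and its only hint at the argument is Remark~\ref{Remark: No Sharp Bound}: the sharp lower bound there comes from an expectation-level estimate $c\min_{U\in\U_\delta}\mu(U)^{-1}\log\frac1\delta\le\E_\mu(\tau_\delta)$ over a cover as in condition (U), which Ahlfors regularity converts into $c\,\delta^{-\dim_H\Lambda}\log\frac1\delta$.

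Your Step~1 is exactly the intended deduction of the first display from Theorem~\ref{Th:MN_main_theorem}.

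For the sharp lower bound you take a genuinely different route. You run a second-moment (Chebyshev) argument on the number $Z$ of unvisited targets at time $n=\eta\delta^{-d}\log\frac1\delta$ and then apply Markov's inequality, instead of bounding $\E_\mu(\tau_\delta)$ directly through expected hitting times. Your route gives more: it shows $\mu(\tau_\delta\le\eta\delta^{-d}\log\frac1\delta)\to0$, not just a bound in mean. It also makes clear that all you need is a family with $\log N\gtrsim\log\frac1\delta$, $\mu(U_j)\lesssim\delta^d$ and controlled cross-overlaps. The price is a multi-scale blocking argument with errors controlled relative to $\delta^{C'\eta}$, which is heavier than what an expectation-level bound needs. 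The route does close, but several details need correcting.

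First, suppose the words use only the two full branches and $f$ has further branches. Then disjoint cylinders with $\mu\asymp\delta^d$ number only about $\delta^{-s}$, where $s<d$ is the dimension of the two-branch repeller; their union has measure about $\delta^{d-s}\to0$. So $N\asymp\delta^{-d}$ is false in general. This is harmless: $N$ enters only through $\E Z\gtrsim N\delta^{C'\eta}$, so $N\ge\delta^{-s}$ with $\eta<s/C'$ suffices.

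Second, $1\,2^k\,w\,1$ is not non-self-overlapping, since it begins and ends with $1$. Also, forbidding $2^k$ in $w$ costs more than a constant factor unless $k$ grows like $\log\log\frac1\delta$. Neither point matters. The first moment only uses $\mu(\tau_{U_j}\le L)\le L\mu(U_j)$, which ignores self-overlaps, and polynomial losses in $N$ are harmless. What the pair estimate actually needs is control of \emph{cross}-overlaps, i.e.\ a suffix of $w_i$ equal to a prefix of $w_j$. Your marker gives exactly this: two distinct words of the family overlap in at most the letter $1$. Your fallback of making the $U_j$ differ in their first symbols targets the wrong thing, since common prefixes are harmless (the $U_j$ are disjoint), and it would leave only boundedly many targets.

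Third, Step~3 needs no inclusion--exclusion inside blocks. For $V=U_i\cup U_j$,
\[
\mu(\tau_V>L)\le\mu(\tau_{U_i}>L)\,\mu(\tau_{U_j}>L)+\mu(\tau_{U_i}\le L,\ \tau_{U_j}\le L).
\]
By quasi-Bernoulli plus the cross-overlap control, the last term is at most $CL^2\delta^{2d}$. Over $n/L$ blocks this accumulates to $O(nL\delta^{2d})=O(\eta\delta^{d/2}\log\frac1\delta)\to0$, uniformly over all pairs. The gap losses $(n/L)g\delta^d$ must be $o(\mu(\tau_{U_j}>n))=o(\delta^{C'\eta})$, not merely $o(1)$. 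With $L=\delta^{-d/2}$ and $g\asymp\log\frac1\delta$ they are about $\eta\delta^{d/2}\log^2\frac1\delta$, which is fine for small $\eta$. Chebyshev then gives $\mu(Z=0)\le 1/\E Z+o(1)\to0$.
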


\section{Setting and main results}\label{Sec: CFS and main Thm}
Now, we turn to the dynamical system that will be our study's focus.

We define the symbolic space by 
$$\Sigma:=\left\{\ulx=(x_0,x_1,\ldots)\in\N^{\N_0}:a_{x_nx_n+1}=1,\ \forall n\in {\N_0}\right\},$$
for $A=(a_{ij})$, an $\N \times\N$ matrix of $0$s and $1$s, where $\N_0=\N\cup\{0\}$. The shift map $\sigma:\Sigma\to\Sigma$ is defined by $\sigma(\ulx)=(x_1, x_2, \ldots)$, and the system $(\Sigma,\sigma)$ is called a \textit{one-sided countable Markov shift} (CMS). If all the entries of the matrix are one, then $\Sigma$ includes all sequences over the $\N$, and it is called a \textit{one-sided countable full shift} (CFS).

For $n\in\N$, the subset $$Z_n=[y_0,\ldots,y_{n-1}]:=\left\{\ulx\in\Sigma : y_0=x_0,\ldots,y_{n-1}=x_{n-1}\right\}\subset\Sigma,$$ is called an \textit{$n$-cylinder}. Denote the collection of all $n$-cylinders by $\mathcal{Z}^n$, then $\Sigma$ has a topology generated by the cylinder sets \ie $\left\{Z_n:Z_n\in \mathcal{Z}^n, \forall n\in\N\right\}$.

We denote the set of all words in $\Sigma$ with length $n$ by $\Sigma_n$ and $\Sigma^*:=\cup_{k\ge 0}\Sigma_k$ denotes the set of all finite words in $\Sigma$. 

Note that depending on $A$, $\Sigma$ may be locally compact \cite[Observation 7.2.3]{Kitchens}, but the system $(\Sigma,\sigma)$ is never compact. However, for the cover time definition to make sense, the system must be a subspace of a compact space. In, for example \cite{Zargar} and \cite[Section 4.1]{IommiTodd}, by defining a metric associated with a totally bounded metric, a completed space can be constructed for CMS that is compact. As this metric is employed throughout this paper, we begin by introducing it.

Recall that a metric space $(X,\rho)$ is totally bounded if for all $\varepsilon>0,$ there exists a finite set $\{x_1,\ldots,x_k: x_i\in X, \forall\ 1\le i\le k \}$ \st $X\subset\bigcup_{i=1}^k B_{\varepsilon}(x_i)$.

\begin{remark}
   Define $\rho:\N\times\N\to\R$ by $$\lim_{n\to\infty}\sup_{i,j\ge n} \rho(i,j)=0,$$ then $(\N,\rho)$ is a totally bounded metric space. Therefore, one can consider totally bounded metrics with different asymptotic behaviours. An example of a polynomial behaviour of a totally bounded metric on $\N$ is $\rho_1(x, y) = \left| \frac{1}{x} - \frac{1}{y} \right|,$ which was used in \cite{Zargar}. Also, the metric $\rho_2(x, y) = \left| \frac{1}{e^{\alpha x}} - \frac{1}{e^{\alpha y}} \right|,$ for some $\alpha>0$ exhibits exponential behaviour. The discrete metric is not totally bounded.
\end{remark}

Now, let $\theta\in(0,1)$, and $\rho:\N\times\N\to[0,1]$ be a totally bounded metric, then 
$$ d(\ulx,\underline{y})=\sum_{n\ge 0} \theta^n \rho(x_n,y_n),$$ is a metric on $\Sigma$ with the following properties (see e.g. \cite[Section 4.1]{IommiTodd}):

\begin{itemize}
    \item $(\overline{\Sigma}_\rho,\overline{d}
)$, the completion of $\Sigma$ with respect to totally bounded metric $\rho$, is compact,
    \item $d$ generates the cylinder topology and $\sigma$ is a uniformly continuous on $\Sigma$ where it extends to $\overline{\Sigma}$ as a continuous map.
\end{itemize}

    One of the key features of the metric $d$ is that it is totally bounded. How this result is obtained is shown in Proposition \ref{Lemma:Totally bounded in Full shift}.

We next explain our dynamics and measures.

\begin{definition}
    Given $\phi:\Sigma \to \R$ be the potential on $\Sigma$, the \textit{$n$-th variation of $\phi$} is,
    $$ var_n(\phi):=\sup \left\{|\phi(x)-\phi(y)|:x_0=y_0,\ldots,x_{n-1}=y_{n-1}\right\}.$$
\end{definition}
We say $\phi$ has \textit{summable variations} if $\sum_{n\ge 2} var_n(\phi)<\infty$.

Similarly, Definition \ref{Def:BIP} for countable Markov shifts can be rewritten as follows.
\begin{definition}\label{Def:BIP_CMS}
    Let $A$ be the transition matrix. $A$ has the \textit{big images property (BIP)} if
    $$ \exists\ b_1,\ldots,b_N \in \N : \forall t\in\N, \quad \exists\ i,j \quad \text{such that}  \quad a_{b_it}a_{tb_j}=1.$$
\end{definition}
Note that CFS naturally has this condition.  

\begin{definition}
    Let $\phi$ be the potential on a countable full shift. The \textit{Gurevich pressure} of $\phi$ is, 
    $$ P_G(\phi)=\lim_{n\to\infty} \frac{1}{n}\log\sum_{\sigma^n \underline{x}=\underline{x}}e^{S_n\phi(\underline{x})} .$$
\end{definition}

Note that it can be shown that Gurevich pressure exists \cite{Sarig1}. We can rewrite Definition \ref{Def:Gibbs} for countable Markov shifts.

\begin{definition}\label{Definition:Gibbs measure in CMS}
A \textit{Gibbs measure} for $\phi$ on a countable Markov shift is an invariant probability measure $m$ such that for some constants $K>1$ and $P$ and every $Z_n$,
$$K^{-1}\le\frac{m(Z_n)}{e^{S_n\phi(\underline{x})-nP}}\le K, \quad \forall \underline{x} \in Z_n.$$
\end{definition}
Note that by \cite[Proposition 2.2]{MauldinUrbanski01}, $P$ equals the pressure $P_G(\phi)$. For convenience in calculations, we usually normalise the potential by $\phi-P_G(\phi)$, which ensures that $P=0$.

From this point onwards, we assume $(\Sigma,\sigma)$ represents the countable full shift space and $\phi$ be the potential \st $\phi\big|_{[n]}\le -\kappa\log (n+1)$ for some $\kappa>1$, then $\phi$ has Gurevich pressure and by \cite[Theorem 1]{Sarig2}, implies that $(\Sigma,\sigma)$ has a Gibbs measure. It is well known that any Gibbs measure is a $\psi$-mixing measure as well (see \cite[Lemma 2.4]{MelbournNicol05}). 

We consider the following metrics and, endowing the CFS with each of them, examine the asymptotic behaviour of $\E(\tau_\delta)$,

\begin{equation}\label{Def:Metric d}
d_1(\ulx,\underline{y}):=\sum_{n\ge 0} \theta^n \left| \frac{1}{x_n} - \frac{1}{y_n} \right|,\end{equation}
\begin{equation}\label{Def:Metric d_2}
    d_2(\ulx,\underline{y}):=\sum_{n\ge 0} \theta^n  \left| \frac{1}{e^{\alpha x_n}} - \frac{1}{e^{\alpha y_n}} \right|,
\end{equation}where $\alpha>0$, $\theta\in(0,1)$ and $\ulx,\uly$ are any elements that belong to the full shift.

Now, we can state our main results.
\begin{theorem}\label{Theorem: main theorem}
   Assume that $(\Sigma,\sigma)$ is the countable full shift equipped with the metric $d_1$ as in \eqref{Def:Metric d}. Let $\phi:\Sigma\to\R$ be a potential \st $\phi\big|_{[n]}\le -\kappa\log (n+1)$ for some $\kappa>1$, which admits the Gibbs measure  $\mu$. Then there exist $0<c<C<\infty$ and $\varepsilon>0$ such that for all $\delta>0$,
$$\frac{c}{M_\mu(\delta/\varepsilon)}\le\E(\tau_\delta)\le \frac{C}{M_\mu(\varepsilon\delta)}\left(\log\frac1{\delta}\right)^2. $$
\end{theorem}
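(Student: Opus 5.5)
The plan follows the strategy of \cite{MikeNatali}. We reduce cover time to hitting times of the elements of a finite disjoint cover of $\Sigma$ by unions of cylinders, then use $\psi$-mixing to estimate those hitting times uniformly.

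\textbf{Step 1 (the cover).} For each small $\delta$, I will build a finite collection $\U_\delta$ of pairwise disjoint sets covering $\Sigma$, each a finite union of cylinders, playing the role of Condition (U). Under $d_1$, the digits $x_n\ge N$ all lie within $1/N$ of one another in $\rho_1$. So the natural candidates are sets of the form $[w_0,\ldots,w_{k-1}, \{\ge N_k\}]$, or more precisely "tail" cylinders where each coordinate $j$ is either fixed, or lumped into $\{i : i\ge N_j(\delta)\}$ with $N_j\approx \theta^{j}/\delta$. We stop at depth $n(\delta)\approx \log(1/\delta)/\log(1/\theta)$, since $\sum_{j\ge n}\theta^j<\delta$.

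For each $U\in\U_\delta$ I will check two properties. First, $B(\ulx,t\delta)\subseteq U\subseteq B(\ulx,T\delta)$ for suitable $\ulx$ and uniform $0<t<1<T$; this should be Proposition \ref{Lemma:Totally bounded in Full shift}. Second, $U$ is a union of cylinders of common depth $\mathcal{O}(\log\frac1\delta)$. The sets are not literally cylinders, only finite unions, because of the lumped symbols. The cardinality $\#\U_\delta$ grows like $\prod_{j<n}N_j$, which is polynomial-type in $1/\delta$. The argument only needs $\log\#\U_\delta=\mathcal{O}(\log\frac1\delta)$, and this holds since the product has about $\log(1/\delta)$ factors, each at most $1/\delta$. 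This gives $\log\#\U_\delta=\mathcal{O}((\log\frac1\delta)^2)$, which I expect is the source of the squared logarithm. The key estimate is therefore $\log \#\U_\delta \lesssim (\log\frac1\delta)^2$.

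\textbf{Step 2 (hitting times).} Next I will prove Theorem \ref{Lemma: upper bound for E(hitting time)}: for $U$ a union of depth-$n$ cylinders, $\E_\mu(\tau_U)\le C/\mu(U)$ with $C$ uniform. The approach is to show, via $\psi$-mixing \eqref{Equ:psi mixing} with a gap of fixed size $g$ (where $\psi(g)<1/2$), that
\[
\mu(\tau_U>k(n+g))\le (1-c\,\mu(U))^k .
\]
Because $U$ is a union of cylinders of depth $n$, the events "not in $U$ at times $0,(n+g),2(n+g),\ldots$" are approximately independent. Summing the tail gives $\E_\mu(\tau_U)\lesssim (n+g)/\mu(U)$. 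The factor $n\sim\log\frac1\delta$ is a problem here, so to get a bound of order $1/\mu(U)$ I will instead compare with Kac's lemma as the paper indicates. Even if that comparison fails, accepting an extra $\log\frac1\delta$ still fits the target bound.

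\textbf{Step 3 (assembling).} For the upper bound, note that $\tau_\delta\le\max_{U\in\U_{\varepsilon\delta}}\tau_U$ whenever each $U$ has diameter at most $\delta$. By the exponential tail from Step 2, $\mu(\tau_U>s)\le e^{-cs\mu(U)}$ up to constants. A union bound then gives
\[
\E\max_U\tau_U\lesssim \frac{\log\#\U}{\min_U\mu(U)}.
\]
Since $\mu(U)\ge\mu(B(\ulx,t\varepsilon\delta))\ge M_\mu(t\varepsilon\delta)$, combining with Step 1 yields $\frac{C}{M_\mu(\varepsilon\delta)}(\log\frac1\delta)^2$.

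For the lower bound, pick $\ulx$ attaining (nearly) $M_\mu$ at scale $\delta/\varepsilon$ and a cover element $U\supseteq B(\ulx,\delta)$ with $U\subseteq B(\ulx,T\delta)$. Covering requires hitting $B(\ulx,\delta)$, so $\E\tau_\delta\ge\E_\mu\tau_U$. The mixing lower estimate $\mu(\tau_U\le s)\le s\mu(U)$, which is just invariance, then gives $\E\tau_U\ge c/\mu(U)\ge c/M_\mu(\delta/\varepsilon)$, after using the Gibbs property to compare nearby balls. These are the cover-time/hitting-time lemmas of \cite{MikeNatali}.

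The main obstacle is Step 1: producing disjoint sets that are comparable to balls in $d_1$ with uniform $t,T$ despite the lumped tails, while keeping the cardinality bound.
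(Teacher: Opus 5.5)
The gap is in Step~1. The cover you describe fails both containments of (U)(a), and Step~3 needs both.

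\textbf{Inner ball.} Fixing each digit below $N_j$ individually does not work. Suppose $U$ prescribes the digit $a$ at coordinate $j$. Changing only that digit to $a+1$ moves a point by $\theta^j\rho_1(a,a+1)=\theta^j/(a(a+1))$. This is below $t\delta$ once $a\gtrsim(\theta^j/t\delta)^{1/2}$, which covers most $a<N_j\approx\theta^j/\delta$. Such $U$ contain no ball of radius $t\delta$, so the inequality $\mu(U)\ge M_\mu(t\varepsilon\delta)$ in Step~3 is unjustified. By the Gibbs bound, the element with $a_j=N_j-1$ for all $j<n$ has $\mu(U)\le K\prod_{j<n}(\delta\theta^{-j})^{\kappa}$. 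For Bernoulli weights $\mu([a])\asymp a^{-\kappa}$ my own estimate suggests this is smaller than $M_\mu(\varepsilon\delta)$ by a factor $e^{-c(\log\frac1\delta)^2}$. In that example, changing the constant $\varepsilon$ only moves $\log M_\mu(\varepsilon\delta)$ by $o((\log\frac1\delta)^2)$, so no choice of $\varepsilon$ repairs this.

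\textbf{Outer ball.} A lumped coordinate $\{i\ge N_j\}$ contributes $\theta^j/N_j\approx\delta$ to the $d_1$-diameter. The element with all $n$ coordinates lumped therefore has diameter $\asymp\delta\log\frac1\delta$. There is no uniform $T$, and you cannot get $\tau_\delta\le\max_U\tau_U$ for a cover at scale $\varepsilon\delta$ with $\varepsilon$ constant.

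\textbf{What the paper does.} It handles the outer bound by using one threshold $N=\lceil 1/\delta\rceil$ at every coordinate $j<k$. Lumped coordinates then contribute only $\theta^j\delta$, so $\mathrm{diam}\,U\le T\delta$. It handles the fine digits by grouping the digits between about $\sqrt N$ and $N$ into roughly $\sqrt N$ consecutive blocks $\{\lceil N/(i+1)\rceil,\ldots,\lceil N/i\rceil-1\}$ of $\rho_1$-diameter about $\delta$, with singletons only below about $\sqrt N$. This gives $K_\delta\approx 2\delta^{-1/2}$ symbols per coordinate and $\log\#\U_\delta=k\log K_\delta\asymp(\log\frac1\delta)^2$, the same order you found.

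Be warned that the paper infers $B_\delta(\ulx)\subset U$ only from a comparison of diameters, which is not a valid inference. Because the paper uses the same blocks at every coordinate, the inner ball genuinely fails at coordinates with $\theta^j<t$. For example, in the all-tail element, replacing $x_j$ by $N-1$ costs only about $\theta^j\delta$.

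A robust way to get (U)(a)--(d) with uniform constants is a Voronoi partition:
\begin{itemize}
\item take a maximal $2\delta$-separated set $\{\ulx_i\}$;
\item choose $n$ with $\theta^n/(1-\theta)\le\delta/4$;
\item assign each $n$-cylinder to the cell of the centre nearest to a chosen point of that cylinder.
\end{itemize}
Then $B(\ulx_i,\delta/2)\subseteq U_i\subseteq B(\ulx_i,3\delta)$. Comparing with a cover by product boxes whose $j$-th coordinate has $\rho_1$-diameter $\delta\theta^{-j}/n$ still gives $\log\#\U=O((\log\frac1\delta)^2)$. Your Step~2 gap argument applies verbatim to any union of $n$-cylinders.

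\textbf{Tail bound in Step~3.} The union bound needs $\mu(\tau_U>s)\le Ce^{-cs\mu(U)}$. Step~2 only gives rate $\mu(U)/(n+g)$, and Kac's lemma controls only the mean. Since $\log\#\U\asymp(\log\frac1\delta)^2$ already, the extra factor $n$ gives $(\log\frac1\delta)^3$, so ``still fits the target bound'' is not right as written. It can be rescued by a scaling fact specific to this setting. Prepend a huge digit to a near-minimiser and use the Gibbs property with $\phi|_{[b]}\le-\kappa\log(b+1)$; this gives $M_\mu(\theta r)\le Cr^{\kappa-1}M_\mu(r)$. Any polylogarithmic factor can then be absorbed by shrinking $\varepsilon$, and neither the subpartition lemma nor the Kac comparison is needed.

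\textbf{Lower bound.} Your lower bound is correct and simpler than the paper's, which goes through Kac's lemma and the asymptotic $\mu(\tau_U\ge n)\approx e^{-\mu(U)n}$. Apply $\mu(\tau_A\le s)\le s\mu(A)$ directly to $A=\overline{B}(\ulx,\delta)$, with $\varepsilon<1$ and $\ulx$ nearly minimising $\mu(B(\cdot,\delta/\varepsilon))$. Do not route it through a cover element containing $B(\ulx,\delta)$: for a partition, such an element need not exist for a prescribed $\ulx$.
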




By replacing the metric with $d_2$, we obtain:
\begin{theorem}\label{Theorem: main theorem 2}
   Assume that $(\Sigma,\sigma)$ is the countable full shift equipped with the metric $d_2$ as in \eqref{Def:Metric d_2}. Let $\phi:\Sigma\to\R$ be a potential \st $\phi\big|_{[n]}\le -\kappa\log (n+1)$ for some $\kappa>1$, which admits the Gibbs measure $\mu$. Then there exist $0<c<C<\infty$ and $\varepsilon>0$ such that for all $\delta>0$, we have
$$\frac{c}{M_\mu(\delta/\varepsilon)}\le\E(\tau_\delta)\le \frac{C}{M_\mu(\varepsilon\delta)}\left(\log\frac1{\delta}\right)\left(\log\left(\log\frac1{\delta}\right)\right).$$

\end{theorem}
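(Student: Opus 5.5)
The plan follows the scheme of \cite{MikeNatali} adapted to the metric $d_2$. The key difference from Theorem \ref{Theorem: main theorem} is the size of the finite disjoint cover.

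\textbf{Lower bound.} Take a point $\ulx$ realising (up to a constant) the minimum $M_\mu(\delta/\varepsilon)$, and set $B=B(\ulx,\delta/\varepsilon)$. For $\tau_\delta(\uly)\le n$, the orbit segment of $\uly$ must enter $B(\ulx,\delta)\subset B$. Hence
$$\mu(\tau_\delta\le n)\le \mu\Big(\bigcup_{k=0}^n\sigma^{-k}B\Big)\le (n+1)\mu(B)$$
by invariance. Taking $n=\lfloor 1/(2\mu(B))\rfloor$ gives $\mu(\tau_\delta>n)\ge 1/2$, and so $\E(\tau_\delta)\ge c/M_\mu(\delta/\varepsilon)$. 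This part is metric-independent. The only point to check is that balls in $d_2$ are comparable to unions of cylinders, so that $\mu(B)$ is finite and meaningful.

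\textbf{Cover construction.} Next I build an analogue of condition \textbf{(U)} for $d_2$, i.e.\ a version of Proposition \ref{Lemma:Totally bounded in Full shift} for the exponential metric. Fix $\delta$.
\begin{itemize}[label={}]
\item Symbols $j$ with $e^{-\alpha j}\lesssim\delta$ all lie within $\delta$ of each other. So the tail $\{j\ge N_\delta\}$, with $N_\delta\approx\alpha^{-1}\log(1/\delta)$, can be lumped together.
\item At depth $k$ the weight $\theta^k$ shrinks the required resolution. Only coordinates $k\le L_\delta\approx\log(1/\delta)/\log(1/\theta)$ matter, and at coordinate $k$ only symbols $j\lesssim N_\delta-k\log(1/\theta)/\alpha$ need to be distinguished.
\end{itemize}
The cover $\mathcal U_\delta$ then consists of pairwise disjoint sets $U$, each a union of cylinders of the common depth $L_\delta=\mathcal O(\log 1/\delta)$. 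Each $U$ satisfies $B(\ulx,t\delta)\subseteq U\subseteq B(\ulx,T\delta)$, and hence $\mu(U)\ge M_\mu(\varepsilon\delta)$ for suitable $\varepsilon$. I expect $\#\mathcal U_\delta$ to be of order $\prod_{k\le L_\delta}(N_\delta-ck)$, which is at most polynomial in $1/\delta$, with
$$\log\#\mathcal U_\delta=\mathcal O\big(\log(1/\delta)\,\log\log(1/\delta)\big)$$
in the worst case. The precise quantity needed is only $\log\#\mathcal U_\delta$.

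\textbf{Upper bound.} Apply Theorem \ref{Lemma: upper bound for E(hitting time)}: for each $U\in\mathcal U_\delta$, $\E_\mu(\tau_U)\le C/\mu(U)\le C/M_\mu(\varepsilon\delta)$, with constants uniform in $\delta$. Combine this with the \cite{MikeNatali} lemma bounding the cover time by hitting times. Using $\psi$-mixing with a gap of $g$ steps, we get
$$\mu(\tau_U>m(s+g))\le\big(1-c\,s\,\mu(U)\big)^m,$$
uniformly in $U$. A union bound over $\mathcal U_\delta$ then gives
$$\E(\tau_\delta)\lesssim \frac{\log\#\mathcal U_\delta}{M_\mu(\varepsilon\delta)}+\text{(gap terms)},$$
where the gap length must be comparable to the cylinder depth $L_\delta$. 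To reach the stated $(\log\frac1\delta)(\log\log\frac1\delta)$, I have to control $\log\#\mathcal U_\delta$ and the gap/depth factor together. The plan is to show that one factor contributes $\log(1/\delta)$, and that the other only contributes $\log\log(1/\delta)$. The $\log\log$ factor comes from the exponential metric: the relevant alphabet at scale $\delta$ has size only $\approx\log(1/\delta)$. By contrast, for $d_1$ the alphabet has size $\approx1/\delta$, which produces the extra $\log$ in Theorem \ref{Theorem: main theorem}.

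\textbf{Main obstacle.} The hardest step is the counting and geometry of $\mathcal U_\delta$ under $d_2$. Two things must be verified: the sandwich $B(\ulx,t\delta)\subseteq U\subseteq B(\ulx,T\delta)$ with uniform $t,T$ despite the lumped tail symbols, and the sharp estimate on $\log\#\mathcal U_\delta$. I would first try taking $U$ to be the sets of the form $[w]\cap\{x_k\ge N_\delta-ck\}$ and verifying both properties directly from the geometric series $\sum\theta^k e^{-\alpha x_k}$.
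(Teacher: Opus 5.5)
Your lower bound is correct, and cleaner than the paper's. The paper routes it through its cover, its ``property (2)'' and the lower half of Theorem~\ref{Lemma: upper bound for E(hitting time) d_2}. Your estimate $\mu(\tau_\delta\le n)\le(n+1)\mu(B)$ needs nothing about covers: take $\varepsilon<1$ so the closed $\delta$-ball lies in $B$, and a centre with $\mu(B)\le 2M_\mu(\delta/\varepsilon)$.

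The gap is in the upper bound, exactly at the step you flag, and it cannot be closed as proposed. Write $\beta:=\log(1/\theta)/\alpha$ (your $c$). With your thresholds, the tail factor at depth $j$ contributes $\theta^je^{-\alpha(N_\delta-\beta j)}=e^{-\alpha N_\delta}\approx\delta$ to the diameter. So the element with every coordinate in the tail has diameter $\asymp L_\delta\,\delta\asymp\delta\log(1/\delta)$.

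No choice of thresholds repairs this. Take any finite partition into product sets $U=\{\uly: y_j\in A_j,\ j<L\}$. Some element contains $(M,M,\dots)$ for infinitely many $M$, so all its $A_j$ are unbounded. If $B(\uly,t\delta)\subseteq U$, then $A_j\supseteq\{n:\rho_2(y_j,n)<t\delta\theta^{-j}\}$. Together with unboundedness this gives $\theta^j\,\mathrm{diam}_{\rho_2}(A_j)\ge e^{-\alpha}t\delta/2$ for each of the $\asymp\log(1/\delta)$ indices with $\theta^j\ge e^{\alpha}t\delta$. Since $\mathrm{diam}(U)=\sum_j\theta^j\mathrm{diam}_{\rho_2}(A_j)$ (with $A_j=\N$ for $j\ge L$), we get $\mathrm{diam}(U)\gtrsim t\delta\log(1/\delta)$. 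So the sandwich with uniform $t,T$ is impossible for product covers, and your route to $\mu(U)\ge M_\mu(\varepsilon\delta)$ goes with it.

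You cannot import this step from the paper either. Its cover uses the fixed threshold $N$ at every depth, which keeps diameters $\mathcal O(\delta)$. It then obtains $\max_U\mu(U)^{-1}\le M_\mu(\delta/2T)^{-1}$ from a ``property (2)'' justified only by comparing diameters. That property is false: two points differing only at one coordinate $j<k$, with symbols $N-1$ and $N$ and $\theta^j(e^\alpha-1)<1$, are $\delta$-close but lie in different elements.

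The measure bound itself also fails for that cover. Take the Bernoulli measure $p_n=(n+1)^{-\kappa}/Z$ and $\alpha>\gamma$, so the pieces below $N$ are singletons.
\begin{enumerate}
\item The element $\{y_j=N-1,\ j<k\}$ has measure $p_{N-1}^k$.
\item For fixed $\varepsilon$, every $\varepsilon\delta$-ball contains a product set whose $j$-th factor is either $\{x_j\}$ or a tail, with mass at least $p_{n_j}$ where $n_j\le\beta(k-j)+\mathcal O(\log(k-j+2))$.
\item Stirling then gives $M_\mu(\varepsilon\delta)\ge e^{\kappa k-\mathcal O(\log^2k)}\,p_{N-1}^k$, and $e^{\kappa k}$ is a power of $1/\delta$.
\end{enumerate}
Your own union-bound argument gives $\E_\mu(\tau_{\U})\ge c/\min_U\mu(U)$. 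Hence no argument via $\tau_\delta\le\tau_{\U}$ with that cover reaches the stated estimate.

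For the geometry, the way out is to drop disjointness; the union bound you sketch does not need it, unlike Lemma~\ref{Lemma:subpartition}. Cover $\Sigma$ by the depth-$L$ truncations $\{\uly:\sum_{j<L}\theta^j\rho_2(x_{i,j},y_j)<\delta\}$ of the $\delta$-balls centred at a maximal $\delta$-separated set $\{\ulx_i\}$.
\begin{itemize}
\item These sets are unions of depth-$L$ cylinders, of diameter at most $3\delta$.
\item Each contains $B(\ulx_i,\delta)$, so it has measure at least $M_\mu(\delta)$ for free.
\item There are no more of them than elements of any cover by sets of diameter $<\delta$ (e.g.\ the paper's product cover at a smaller scale). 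Hence $\log\#\lesssim\log\frac1\delta\log\log\frac1\delta$.
\end{itemize}

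Two side remarks. First, $\prod_{j\le L_\delta}(N_\delta-\beta j)\approx\beta^{L_\delta}L_\delta!$ is super-polynomial in $1/\delta$, not polynomial, although its logarithm is what you wrote. Second, once $\log\#$ is known the depth enters only additively, because $L_\delta\ll1/M_\mu(\varepsilon\delta)$; nothing remains to be split between ``factors''.

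What is left, and this is the real work, is the uniform estimate $\mu(\tau_U\le s)\ge c\,s\,\mu(U)$ (for $s\mu(U)\le1$) for these non-product $U$. By a second-moment argument this reduces to a uniform short-return bound such as $\sum_{r\le L+g}\mu_U(\sigma^{-r}U)\le C$. Theorem~\ref{Lemma: upper bound for E(hitting time) d_2} does not supply it. It is stated only for the product elements of the paper's cover, and its short-return step controls returns to the same cylinder rather than to $U$.
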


\begin{remark}\label{Remark: Conclusion after Main Thm}
\begin{enumerate}
    \item In these results, the upper bounds depend on the metric. This is due to the use of a finite open cover on $(\Sigma,\sigma)$ to derive these upper bounds, since the cardinality of the cover directly affects the bounds. In fact, the cover is chosen with properties that allow us to reduce the problem of estimating the expected cover time to that of estimating the expected hitting time of the elements of the cover. Then, these properties also allow us to derive an upper bound for this expected hitting time in terms of the number of sets in the cover. In each metric setting, the cardinality of the finite open cover is compared in exact correspondence with the bounds discussed above.
    \item In view of the obtained results and those established in \cite{MikeNatali}, we conclude that the upper bound of the cover time is sensitive to changes in the metric. Moreover, the bounds obtained for the metric $d_1$ are larger than those corresponding to the metric $d_2$, and both exceed the bounds for the interval maps (i.e. results in \cite{MikeNatali}). That is, one expects that the orbits in an interval map cover the system rapidly in comparison to the orbits in a CFS with metrics $d_1$ and $d_2$. Furthermore, the orbits in a CFS equipped with a metric with exponential behaviour cover the system faster than when it is equipped with a polynomial metric, but still slower than the interval maps.
\end{enumerate}

\end{remark}

\begin{lemma}\label{Lemma: No Ahlfors exists}
     Assume that $(\Sigma,\sigma)$ is the countable full shift equipped with the metric $d_1$ as in \eqref{Def:Metric d}. Let $\phi:\Sigma\to\R$ be a potential \st $\phi\big|_{[n]}\to-\infty$ as $n\to \infty$, which admits the Gibbs measure  $\mu$. Then, $\dim_M\mu=\infty$. Consequently, Ahlfors regularity does not hold for $\mu$.
\end{lemma}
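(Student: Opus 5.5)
We show that $M_\mu(\delta)=\min_{\ulx}\mu(B(\ulx,\delta))$ decays faster than any power of $\delta$. Then $\log M_\mu(\delta)/\log\delta\to\infty$, so $\dim_M\mu=\infty$. The idea is to choose a ball centred at a point whose first symbol is very large, so that in the $d_1$ metric a ball of radius $\delta$ is trapped inside a single $1$-cylinder of tiny measure.

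\emph{Step 1: the ball is trapped in a cylinder.} For $\ulx\in\Sigma$ with $x_0=k$ and any $\uly\in\Sigma$, we have $d_1(\ulx,\uly)\ge |1/k-1/y_0|$. If $y_0\neq k$, then
\[
\left|\frac1k-\frac1{y_0}\right|\ge \frac1k-\frac1{k+1}=\frac1{k(k+1)}
\]
when $y_0>k$, and the bound $\ge\frac{1}{k(k-1)}$ for $y_0<k$ is larger still. Hence $B(\ulx,\delta)\subseteq[k]$ whenever $\delta<\frac1{k(k+1)}$.

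\emph{Step 2: choose $k$ as a function of $\delta$.} Given small $\delta$, take $k=k(\delta)$ maximal with $k(k+1)<1/\delta$. Then $k\asymp\delta^{-1/2}$, and in particular $k\to\infty$ as $\delta\to0$.

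\emph{Step 3: measure of the cylinder.} By the Gibbs property (Definition~\ref{Definition:Gibbs measure in CMS}, normalised so $P=0$), $\mu([k])\le K e^{\sup_{[k]}\phi}$. The hypothesis $\phi|_{[n]}\to-\infty$ gives $\epsilon_k:=\sup_{[k]}\phi\to-\infty$, so $\mu([k])\to0$.

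\emph{Step 4: the delicate point.} We need more than $\mu([k])\to 0$; we need $\mu([k])$ to be eventually smaller than every power of $\delta$. Under only the stated hypothesis, $\epsilon_k$ may tend to $-\infty$ slowly, and then one cylinder is not enough.

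The remedy is to use deeper cylinders. Take the centre $\ulx=(k,k,\dots,k,\dots)$ with $k$ large but fixed. A point $\uly$ within distance $\delta$ of $\ulx$ must agree with it in every coordinate $n$ with $\theta^n/(k(k+1))>\delta$. This holds for $n\le N(\delta)\approx \log(1/(\delta k^2))/\log(1/\theta)$. Hence $B(\ulx,\delta)\subseteq[k^{N}]$, the cylinder on the word $k$ repeated $N$ times. By the Gibbs property, $\mu([k^N])\le K e^{N\epsilon_k}$, so
\[
\frac{\log M_\mu(\delta)}{\log\delta}\gtrsim \frac{N\,|\epsilon_k|}{\log(1/\delta)}\to\frac{|\epsilon_k|}{\log(1/\theta)}\quad\text{as }\delta\to0 .
\]
Since $k$ was arbitrary and $|\epsilon_k|\to\infty$, the $\liminf$ is infinite, so $\dim_M\mu=\infty$ in the sense that the defining ratio diverges. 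This deeper-cylinder version is the main step.

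\emph{Step 5: consequence for Ahlfors regularity.} Suppose $\mu$ were Ahlfors regular. Then $M_\mu(\delta)\ge \delta^{D}/c$ with $D=\dim_H\Lambda<\infty$, where $D$ is finite because $\Sigma$ is covered by finitely many $\delta$-balls in a controlled way. This would force $\dim_M\mu\le D$, contradicting Step 4.
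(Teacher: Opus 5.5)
Your proof is correct, and its decisive step takes a different and more careful route than the paper's. The paper fixes $\delta$ and takes $k=\lceil\log(\delta(1-\theta))/\log\theta\rceil$. It then identifies $\mu(B_\delta(\ulx))$ with $\mu([x_0,\ldots,x_{k-1}])$ for every $\ulx$, bounds this by the Gibbs property, and minimises over $\ulx$ by sending the symbols to infinity. You instead fix the symbol $k$ and centre at the fixed point $(k,k,\ldots)$. The separation $|1/k-1/y|\ge 1/(k(k+1))$ for $y\neq k$ gives a genuine inclusion $B(\ulx,\delta)\subseteq[k^N]$ with $N\sim\log(1/\delta)/\log(1/\theta)$. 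Letting $\delta\to0$ gives $\liminf_{\delta\to0}\log M_\mu(\delta)/\log\delta\ge|\epsilon_k|/\log(1/\theta)$, and only then do you let $k\to\infty$.

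That order of limits is what makes the argument work. A $k$-cylinder of diameter at most $\delta$ sits inside the $\delta$-ball, so the paper's identification really only gives $\mu(B_\delta(\ulx))\ge\mu([x_0,\ldots,x_{k-1}])$. That is the wrong direction for an upper bound on $M_\mu(\delta)$. Moreover, for centres with very large symbols, the $\delta$-ball contains every $\uly$ whose first $j$ symbols are all $\gtrsim 1/\delta$ (with $\theta^j\lesssim\delta$). That set has positive measure independent of the centre, so at fixed $\delta$ the ball measures stay bounded below. Read literally, the paper's chain would give $M_\mu(\delta)\le 0$. Your double limit is the rigorous version of the paper's heuristic. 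Steps 1--3 are, as you say, superseded by Step 4 and can be dropped.

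The one thing to repair is Step 5. The claim that $D=\dim_H\Lambda<\infty$ because $\Sigma$ is covered ``in a controlled way'' is false for $d_1$. The covering numbers of $(\Sigma,d_1)$ grow superpolynomially in $1/\delta$ (compare $\#\U_\delta$ in Proposition~\ref{Lemma:Totally bounded in Full shift}). In fact $\dim_H(\Sigma,d_1)=\infty$. Apply your Step 4 separation argument to the uniform Bernoulli measure on $\{1,\ldots,M\}^{\N_0}$, where distinct symbols are at $\rho_1$-distance at least $1/(M(M-1))$; this gives dimension at least $\log M/\log(1/\theta)$ for every $M$.

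The conclusion is unaffected once you drop that claim. If the Ahlfors inequalities held with some finite exponent $s$, then $M_\mu(\delta)\ge\delta^{s}/c$ would give $\dim_M\mu\le s$, contradicting Step 4. If instead the exponent is $\dim_H\Lambda=\infty$, the upper inequality $\mu(B_r(\ulx))\le c\,r^{\dim_H\Lambda}$ fails trivially for $r<1$.
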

\begin{proof}
    We first evaluate the diameter of the cylinders with respect to $d_1$. The greatest distance between elements of $Z_k=[u_0,\ldots,u_{k-1}]\in \mathcal{Z}^k$ is approached for $\ulx$ and $\uly$ when they take the following forms:
$$\ulx=(u_0,\ldots,u_{k-1},\!\!\!\!\!\!\!\!\!\!\!\!\stackrel{\scriptstyle k\text{-th component}}{\stackrel{\small{\downarrow}}M}\!\!\!\!\!\!\!\!\!\!\!\!,M,\ldots),$$
$$\uly=(u_0,\ldots,u_{k-1},1,1,\ldots),$$
where $M$ is a large number. Then we have, $$ d_1(\ulx,\underline{y})=\theta^k\left(\sum_{n\ge 0} \theta^n \left| \frac{1}{M} - 1 \right|\right),$$
and as $M\to \infty$, $d_1(\ulx,\underline{y})=\frac{\theta^k}{1-\theta}$. Hence,
\begin{equation}\label{Diam_Cylinder_Full}
    \text{diam}(Z_k)=\frac{\theta^k}{1-\theta}. 
\end{equation}

Now, let $\delta>0$. If $\text{diam}(Z_k)=\frac{\theta^k}{1-\theta}=\delta$, then a cylinder with the length of $k=\left\lceil\frac{\log(\delta(1-\theta))}{\log\theta}\right\rceil$ has diameter $\delta$. This yields for all $\ulx\in \Sigma$, $\mu\left(B_\delta(\ulx)\right)=\mu \left([x_0,\ldots,x_{k-1}]\right)$, where $k=\left\lceil\frac{\log(\delta(1-\theta))}{\log\theta}\right\rceil$. To show $\dim_M\mu=\infty$, we first approximate the quantity $M_\mu (\delta)=\min_{\ulx}\mu\left(B_\delta(\ulx)\right)$ using the Gibbs property, and then use this approximation to compute $\dim_M\mu$. By the Gibbs property, there exists some $K>1$ \st 
$$\mu\left(B_\delta(\ulx)\right)=\mu (\left[x_0,\ldots,x_{k-1}]\right)\le K e^{S_k\phi(\ulx)}= K \prod_{i=0}^{k-1}e^{\phi(\sigma^i(\ulx))}\le K \left(e^{\sup_{{x\in[x_i], 0\le i\le k-1}}(\phi(x))}\right)^k,$$
it follows that \begin{align*}
\min_{\ulx\in \Sigma}\left(\mu\left(B_\delta(\ulx)\right)\right)&\le K \inf_{\ulx\in \Sigma}\left(e^{\sup_{{x\in[x_i], 0\le i\le k-1}}(\phi(x))}\right)^k\le K \left(\inf_{\ulx\in \Sigma}e^{\sup_{{x\in[x_i], 0\le i\le k-1}}(\phi(x))}\right)^k\\&= K \left(e^{\inf_{\ulx\in \Sigma}\left(\sup_{{x\in[x_i], 0\le i\le k-1}}(\phi(x))\right)}\right)^k =K \left(e^{\limsup_{x\in [n],n\in\N}\left(\phi(x)\right)}\right)^k.\end{align*}
Therefore,
$$\log \left(\min_{\ulx\in \Sigma}\left(\mu\left(B_\delta(\ulx)\right)\right)\right)\le \log\left(K\left(e^{\limsup_{x\in [n],n\in\N}\left(\phi(x)\right)}\right)^k \right), $$
after dividing by $\log(\delta)$ and taking $\lim_{\delta\to0}$ on both sides, we obtain 
\begin{align*}
    \dim_M\mu&=\lim_{\delta\to0}\frac{\log M_{\mu}(\delta)}{\log\delta}\ge \lim_{\delta\to0}\frac{\log \left(K\left(e^{\limsup_{x\in [n],n\in\N}\left(\phi(x)\right)}\right)^k\right)}{\log\delta}\\&=\lim_{\delta\to0}\frac{\log K}{\log\delta}+ \lim_{\delta \to 0}\frac{\frac{\log(\delta(1-\theta))}{\log\theta} \left(\log e^{\limsup_{x\in [n],n\in\N}\left(\phi(x)\right)}\right)}{\log \delta}\\& =\lim_{\delta \to 0}\frac{\frac{\log\delta}{\log\theta} \left({\limsup_{x\in [n],n\in\N}\left(\phi(x)\right)}\right)}{\log \delta}+\lim_{\delta \to 0}\frac{\frac{\log(1-\theta)}{\log\theta} \left( {\limsup_{x\in [n],n\in\N}\left(\phi(x)\right)}\right)}{\log \delta}\\&=\frac{{\limsup_{x\in [n],n\in\N}\left(\phi(x)\right)}}{\log \theta}. 
\end{align*}
Since $\phi$ on $[n]$ tends to $-\infty$  as $n\to \infty$, we obtain $\dim_M\mu= \infty$.

Since there is no chance to get $\min_{\ulx\in \Sigma}\left(\mu\left(B_\delta(\ulx)\right)\right)\le \delta^{c}$ for some constant $c>0$, the Ahlfors regularity condition also fails to hold.
\end{proof}
\begin{remark}\label{Remark: No Ahlfors exists for d_2}
    An analogous result to Lemma \ref{Lemma: No Ahlfors exists} holds for $(\Sigma,\sigma)$ with respect to metric $d_2$.
\end{remark}
\begin{remark}\label{Remark: No Sharp Bound}
    In view of Lemma \ref{Lemma: No Ahlfors exists} and Remark \ref{Remark: No Ahlfors exists for d_2}, an analogous statement to Theorem \ref{Th:MN_sharp_theorem} and the second part of Theorem \ref{Th:MN_main_theorem} fail to hold in our setting. In fact, regarding the lower bound obtained under Ahlfors regularity in \cite{MikeNatali}, they use certain techniques (see \cite[Section 5]{MikeNatali}) to derive $$c\min_{U\in\U_\delta}\frac{1}{\mu(U)}\log\left(\frac{1}{\delta}\right)\le \E(\tau_\delta), $$ for some $c\in(0,1)$, and then use Ahlfors regularity to get the sharp lower bounds (Theorem \ref{Th:MN_sharp_theorem}). In our setting, we are able to obtain $c\min_{U\in\U_\delta}\frac{1}{\mu(U)}(\#\U_\delta)\le \E(\tau_\delta),$ for some open cover $\U_\delta$ satisfying condition (\textbf{U}) with respect to the metrics $d_1$ or $d_2$. However, the above bound is worse than $\frac{c}{M_\mu\left(\frac{\delta}{\varepsilon}\right)}$.  
\end{remark}

\section{Cover time under a polynomial metric}\label{Section: Propositions}

In this section, we prove the result related to the metric $d_1$, that is, Theorem \ref{Theorem: main theorem}. First, we present one proposition and one theorem that play a key role in the proof of our result. The first establishes a finite open cover satisfying condition \hyperref[Condition (U)]{\textbf{(U)}} for the full shift with respect to metrics $d_1$, the second introduces a connection between cover time and hitting time.

We divide this section into two subsections. First, we present the propositions mentioned above and in the second subsection, we prove Theorem \ref{Theorem: main theorem}. 

\subsection{Preliminary Propositions for $d_1$}\label{subsection: PP for d_1}

We start with a proposition which gives a pairwise disjoint cover of the union of cylinders on $\Sigma$ with respect to the metric $d_1$, which satisfies the condition \hyperref[Condition (U)]{\textbf{(U)}}.  
\begin{proposition}\label{Lemma:Totally bounded in Full shift}
    Let $\delta>0$ and assume that $(\Sigma,\sigma)$ is a countable full shift with the metric $d_1$ as in \eqref{Def:Metric d}. Then, there exists a pairwise disjoint finite open cover $\U_{\delta}$ for $(\Sigma,d_1)$ with $\#(\U_{\delta})=\left\lceil\left(2\sqrt{\frac{1}{\delta}}\left(1+o\left(\sqrt{\delta}\right)\right)\right) ^ { \frac{\log(\delta(1-\theta))}{\log\theta}}\right\rceil$ \st
\begin{enumerate}
    \item for all $U\in \U_{\delta}$, we have $U=\bigcup_{\mathbf{i}\in \mathrm{U^*}}[\mathbf{i}]$ \st $|\mathbf{i}|= \left\lceil\frac{\log(\delta(1-\theta))}{\log\theta}\right\rceil$, where $\mathrm{U^*}\subset\Sigma^*$ is a set of finite words of length $|\mathbf{i}|$ with respect to $U$,
    \item there exists $1<T<\infty$ \st for all $\ulx\in \Sigma$, there exists $U\in\U_\delta$ \st $B_{\delta}(\ulx)\subset U\subset B_{T\delta}(\ulx)$.
\end{enumerate}

\end{proposition}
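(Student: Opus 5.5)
The plan is to build $\U_\delta$ as a product of one-dimensional partitions of the alphabet $\N$, with respect to $\rho_1(a,b)=|1/a-1/b|$, applied to the first $k:=\left\lceil\frac{\log(\delta(1-\theta))}{\log\theta}\right\rceil$ coordinates. By \eqref{Diam_Cylinder_Full} this $k$ is exactly the depth at which $\text{diam}(Z_k)\le\delta$.

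\emph{Step 1 (partition of $\N$ at scale $\delta$).} The map $n\mapsto 1/n$ sends $\N$ into $(0,1]$, and consecutive gaps are $\frac1n-\frac1{n+1}=\frac1{n(n+1)}$. Let $N_\delta$ be the largest $n$ with $\frac1{n(n+1)}\ge\delta$, so $N_\delta=\delta^{-1/2}(1+o(\sqrt\delta))$. For $n\le N_\delta$ I take singleton blocks $\{n\}$. The tail $\{n>N_\delta\}$ has image in $(0,1/N_\delta]$, an interval of length about $\sqrt\delta$. I cut it into consecutive half-open intervals of length $\delta$ and take as blocks their preimages, which are sets of consecutive integers. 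This gives about $\sqrt\delta/\delta=\delta^{-1/2}$ tail blocks. The total number of blocks is therefore $2\delta^{-1/2}(1+o(\sqrt\delta))$. Each block $I$ satisfies $\text{diam}_{\rho_1}(I)\le\delta$, and the blocks partition $\N$.

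\emph{Step 2 (the cover and property (1)).} For each choice of blocks $(I_0,\ldots,I_{k-1})$ set $U:=\bigcup\{[i_0,\ldots,i_{k-1}]: i_j\in I_j\}$. Each such $U$ is a union of $k$-cylinders with $|\mathbf i|=k$, which is property (1). It is open because cylinders are open. Different choices give disjoint sets, and all choices together cover $\Sigma$, because the blocks partition $\N$ in each coordinate. The count is (number of blocks)$^k$, which gives the stated $\#\U_\delta$ after rounding the exponent to $\frac{\log(\delta(1-\theta))}{\log\theta}$ and taking the ceiling.

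\emph{Step 3 (property (2)).} For the upper inclusion, fix $\ulx$ and let $U\ni\ulx$. For $\uly\in U$,
$$d_1(\ulx,\uly)\le\sum_{j<k}\theta^j\delta+\frac{\theta^k}{1-\theta}\le\frac{\delta}{1-\theta}+\delta .$$
Hence $U\subset B_{T\delta}(\ulx)$ with $T=1+\frac1{1-\theta}$.

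The lower inclusion $B_\delta(\ulx)\subset U$ is where I expect the real difficulty. In the singleton range it is fine: if $x_0\le N_\delta$ and $d_1(\ulx,\uly)<\delta$, then $\rho_1(x_0,y_0)<\delta\le\frac1{x_0(x_0+1)}$ forces $y_0=x_0$. The problems come in two places. First, in the tail blocks a point near a block boundary may have $\delta$-close neighbours in the adjacent block. Second, for $j\ge1$ the coordinate constraint is only $\rho_1(x_j,y_j)<\delta\theta^{-j}$, which is weaker than $\delta$.

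My first attempt is to make the partition depend on the coordinate. In coordinate $j$ I would use singletons as long as the gap exceeds $\delta\theta^{-j}$, and tail blocks of $\rho_1$-length comparable to $\delta\theta^{-j}$. The aim is to check that the product count still has leading behaviour $(2\delta^{-1/2}(1+o(\sqrt\delta)))^k$ up to the implied constants. If boundary points still escape, I would exploit the freedom to choose the block endpoints. Each boundary can be placed inside a gap of $\{1/n\}$. I would then verify whether $B_\delta(\ulx)$ at a boundary point is nevertheless contained in the union of cylinders of the \emph{same} $U$, using the $\theta^j$ weighting across coordinates. Failing that, I would fall back to proving the inclusion for the ball of radius $t\delta$ with a uniform $t$ and absorb the constant into $\varepsilon$ in Theorem \ref{Theorem: main theorem}. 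This boundary issue is the step I expect to be the main obstacle.
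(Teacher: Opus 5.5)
Your Steps 1--2 and the upper inclusion in Step 3 reproduce the paper's construction. It uses about $\delta^{-1/2}$ singletons and about $\delta^{-1/2}$ blocks of $\rho_1$-diameter at most $\delta$, one of them the infinite tail, takes their product over the first $k$ coordinates, and sets $T=1+\frac{1}{1-\theta}$. The genuine gap is the one you flag, the inclusion $B_\delta(\ulx)\subset U$. It cannot be closed, because for any pairwise disjoint cover property (2) is false once $\delta$ is small.

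Since $\ulx\in B_\delta(\ulx)\subset U$, the set $U$ must be the element containing $\ulx$. So any two points at distance less than $\delta$ lie in the same element. Put $\ulx^{(n)}:=(n,1,1,\ldots)$, and let $n_0$ be the least integer with $n_0(n_0+1)>1/\delta$. For $n\ge n_0$ we have $d_1(\ulx^{(n)},\ulx^{(n+1)})=\frac{1}{n(n+1)}<\delta$. By chaining, all $\ulx^{(n)}$ with $n\ge n_0$ lie in a single element. That element must lie in $B_{T\delta}(\ulx^{(n_0)})$, which gives $\frac{1}{n_0}-\frac{1}{n}<T\delta$ for every $n$, hence $\frac{1}{n_0}\le T\delta$. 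But minimality of $n_0$ gives $n_0<1+\delta^{-1/2}$, so $\frac{1}{n_0}>\frac{\sqrt\delta}{1+\sqrt\delta}$. This contradicts $\frac{1}{n_0}\le T\delta$ as soon as $\sqrt\delta+\delta\le 1/T$.

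This rules out each of your remedies:
\begin{itemize}
\item The chain only moves coordinate $0$, where the weight is $\theta^0=1$. So coordinate-dependent blocks and the $\theta^j$-weighting cannot affect it.
\item Coordinate-dependent blocks of $\rho_1$-length $\asymp\delta\theta^{-j}$ in coordinate $j$ would, in addition, give elements of diameter of order $k\delta$. That destroys the upper inclusion.
\item All gaps $\frac{1}{n(n+1)}$ with $n\ge n_0$ are below $\delta$, so there is nowhere in the tail to place a boundary.
\item Shrinking the radius to $t\delta$ only moves $n_0$ to about $(t\delta)^{-1/2}$, and the same contradiction follows.
\end{itemize}

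You are not missing an idea from the paper either. Its proof infers $B_\delta(\ulx)\subset U$ from the diameter comparison $\text{diam}(B_\delta(\ulx))=\delta<\text{diam}(U)$, and this does not imply inclusion. Concretely, $(N,x_1,x_2,\ldots)$ and $(N-1,x_1,x_2,\ldots)$ are at distance $\frac{1}{N(N-1)}<\delta$, yet they lie in different elements of its $\U_\delta$.

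What your argument does prove is property (1), the count, and $U\subset B_{T\delta}(\ulx)$ for every $\ulx\in U$. A weaker per-element version in the spirit of (U)(a) would ask for some centre $\ulx_U$ with $B_{t\delta}(\ulx_U)\subset U$. Together with the upper inclusion, that is all the later bounds use, but it also fails for this product cover. Suppose the coordinate-$(k-1)$ block of $U$ is the tail $\{M,M+1,\ldots\}$ with $1/M\le\delta$. Replacing $x_{k-1}$ by $M-1$ moves any point of $U$ out of $U$, by a distance less than $\frac{\theta^{k-1}}{M-1}\le\frac{(1-\theta)\delta}{\theta(M-1)}$. For small $\delta$ this is below $t\delta$. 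So the cover itself would have to change, not just the argument.
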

\begin{proof}
    
We divide the proof into two steps. In the first step, we are looking for a finite collection of cylinders that efficiently cover $\Sigma$. In the second step, we then verify properties (1) and (2) for this collection.

\textit{Step 1}. To obtain the desired cover, the idea is to take advantage of the total boundedness of $\rho_1$, since this allows us to find a finite cover of $\N$ with respect to $\rho_1$. For this reason, let $\delta> 0$, and start by letting $\mathcal{V}_\delta$ be the empty set, to which elements are then added step by step. Let $N$ be the minimal natural number  \st $\frac{1}{N}\le \delta$. In fact, $N$ provides a convenient approximation of the diameter of the neighbourhoods, which simplifies the computations. Moreover, for
\begin{align}\label{Countable set of N}
    \mathcal{N}_{\rho_1}(N):=\{N,N+1,\ldots\},
\end{align}
if $n\in\mathcal{N}_{\rho_1}(N)$, then ${\rho_1}(N,n)=\frac{1}{N}-\frac{1}{n}<\delta$. We add $\mathcal{N}_{\rho_1}(N)$ to $\mathcal{V}_\delta$.

 Now, we need to find neighbourhoods that cover all $n<N$ for the desired finite open cover. We divide the argument into two parts. Some of the smallest natural numbers have the property that their $\frac{1}{N}$-neighbourhood contains no other natural numbers. We first identify them and then discuss the remaining elements.
 
We can obtain the largest number $n<N$ \st $\mathcal{N}_{\rho_1}(n)$ is a singleton using the following inequality 
$$\frac{1}{n-1}-\frac{1}{n}<\frac{1}{N},$$
after simplification, we have
$n^2-n-N>0$, which admits the following solutions
$$n>\frac{1+ \sqrt{1+4N}}{2}\ge \frac{1+ \sqrt{1+\frac{4}{\delta}}}{2},$$

therefore, $N_s:=\left\lfloor\frac{1+ \sqrt{1+\frac{4}{\delta}}}{2}\right\rfloor$ is the largest $n<N$ with a singleton neighbourhood. We add all $\mathcal{N}_{\rho_1}(n)$ for $n<N_s$ to $\mathcal{V}_\delta$.

To obtain an optimal finite open cover for $\N$, we are looking for $\delta$-neighbourhoods for all $n$, \st $N_s<n<N$. To this end, let $\mathcal{N}_{\rho_1}(N-1)$ be the last neighbourhood before $\mathcal{N}_{\rho_1}(N)$ whose elements satisfy: 
$${\rho_1}(N-1-i,N-1)=\frac{1}{N-1-i}-\frac{1}{N-1}\le \frac{1}{N},$$
for some $i$. As a result, we have $Ni\le N^2-2N+1+i(1-N).$ Consequently, $i\le\left\lceil\frac{N^2-2N+1}{2N-1}\right\rceil$ that is $i\le \left\lceil \frac{N}{2}\left(1+o\left(\frac{1}{N}\right)\right)\right\rceil$. Thus, we obtain the following neighbourhood for large $N$:
\begin{equation}\label{Def: Nei N-1}
\mathcal{N}_{\rho_1}(N-1) =\left\{ \left\lceil\frac{N}{2}\right\rceil,\ldots,N-1  \right\},\end{equation}
which is the largest finite $\delta$-neighbourhood. Next, the neighbourhood before $\mathcal{N}_{\rho_1}\left(N-1\right)$ is $\mathcal{N}_{\rho_1}\left(\left\lceil\frac{N}{2}\right\rceil-1\right)$ in which the elements satisfy
$${\rho_1}\left(\frac{N}{2}-1-i,\frac{N}{2}-1\right)=\frac{1}{\frac{N}{2}-1-i}-\frac{1}{\frac{N}{2}-1}\le\frac{1}{N},$$
we obtain $i\le\left\lceil\frac{N^2-4N-4}{6N-4}\right\rceil$, and in fact we have $i\le \left\lceil \frac{N}{6}\left(1+o\left(\frac{1}{N}\right)\right)\right\rceil$. Hence, for large $N$

$$\mathcal{N}_{\rho_1}\left(\left\lceil\frac{N}{2}\right\rceil-1\right)=\left\{ \left\lceil\frac{N}{3}\right\rceil,\ldots,\left\lceil\frac{N}{2}\right\rceil-1  \right\}.$$ 
Similarly, for large $N$, these neighbourhoods can be represented in the following form:

\begin{equation}\label{Def: Nei N-j}
\mathcal{N}_{\rho_1}\left(\left\lceil\frac{N}{j}\right\rceil-1\right)=\left\{ \left\lceil\frac{N}{j+1}\right\rceil,\ldots,\left\lceil\frac{N}{j}\right\rceil-1  \right\},\end{equation}

with $\#\mathcal{N}_{\rho_1}\left(\left\lceil\frac{N}{j}\right\rceil-1\right)=\left\lceil\frac{N}{j(j+1)}\left(1+o\left(\frac1{N}\right)\right)\right\rceil$.  To determine $j$, representing the number of such neighbourhoods based on $\delta$, we apply the following condition:

$$ \left\lceil\frac{N}{j+1}\right\rceil-1=N_s\left(=\left\lfloor\frac{1+ \sqrt{1+\frac{4}{\delta}}}{2}\right\rfloor\right),$$
because $\left\lceil\frac{N}{j+1}\right\rceil$ is the smallest natural number among the multi-element neighbourhoods, so $\left\lceil\frac{N}{j+1}\right\rceil-1$ should be equal to $N_s$ which is the largest natural number that has a singleton neighbourhood.
It follows that $j=\left\lceil\frac{2N}{1+\sqrt{1+4N}+2}-1\right\rceil$, where we can rewrite it as $$j=\left\lceil\sqrt{N}\left(1+o\left(\frac{1}{\sqrt{N}}\right)\right)\right\rceil.$$
Now we also add all such neighbourhoods in form of $\mathcal{N}_{\rho_1}\left(\left\lceil\frac{N}{i}\right\rceil-1\right)$ for $1\le i\le \left\lceil\sqrt{N}\right\rceil$ to $\mathcal{V}_\delta$. Hence, $\mathcal{V}_\delta$ is the optimal finite open cover for $\N$ with respect to ${\rho_1}$.

To find the number of open neighbourhoods in $\mathcal{V}_\delta$, for large $N$ we have 

\begin{align*}
K_\delta:=1+ j+N_s&=1+\left\lceil \sqrt{N}+\frac{1+ \sqrt{1+4N}}{2}\left(1+o\left(\frac{1}{\sqrt{N}}\right)\right)\right\rceil\\&=1+\left\lceil \frac{2\sqrt{N}+\sqrt{1+4N}+1}{2}\left(1+o\left(\frac{1}{\sqrt{N}}\right)\right)\right\rceil \\&=1+\left\lceil 2\sqrt{N}\left(1+o\left(\frac{1}{\sqrt{N}}\right)\right)\right\rceil
=\left\lceil2\sqrt{\frac{1}{\delta}}\left(1+o\left(\sqrt{\delta}\right)\right)\right\rceil.
\end{align*}
So, for $\delta>0$, there exists the finite set $$W_\delta:=\left\{1,\ldots, \left\lceil\sqrt{N}\right\rceil-1,\left\lceil\sqrt{N}\right\rceil,\ldots,N-1,N\right\},$$ with $K_\delta$ elements, such that $\N\subset\bigcup_{i=1}^{K_\delta}\mathcal{N}_{\rho_1}(v_i)$, where $v_i\in W_\delta$. We set $V_i:=\mathcal{N}_{\rho_1}(v_i)$ for $1\le i\le K_\delta $, then \begin{equation}\label{Def:V_delta}
    \mathcal{V}_\delta=\{V_i\}_{i=1}^{K_\delta}
\end{equation} is the open cover of $\N$, with respect to which ${\rho_1}$ is totally bounded.

We can now construct, using the collection $\mathcal{V}_\delta=\{V_i\}_{i=1}^{K_\delta}$, a finite collection of cylinders for $\Sigma$. Let $k\ge 1$ and suppose $1\le i_0,\ldots,i_{k-1}\le K_\delta$, then we consider the following sets.

\begin{equation}\label{Def: Z_i_0}
(i_0,\ldots,i_{k-1})_{\mathcal{V}_\delta}:=\left\{ (x_0,\ldots,x_{k-1},\ldots): x_0\in V_{i_0},\ldots,x_{k-1}\in V_{i_{k-1}}  \right\}.\end{equation}

In fact, these sets are a union of cylinders, i.e., $(i_0,\ldots,i_{k-1})_{\mathcal{V}_\delta}=\bigcup_{\i\in \mathrm{U^*}}[\i]$, where $$\mathrm{U^*}=\left\{ (x_0,\ldots,x_{k-1}):  x_0\in V_{i_0},\ldots,x_{k-1}\in V_{i_{k-1}}    \right\}.$$ We are interested in finding $\text{diam}\left((i_0,\ldots,i_{k-1})_{\mathcal{V}_\delta}\right)$. To this end, note that the diameter is determined by elements of different cylinders, since the metric between two points within a single $k$-cylinder from this collection is zero in the first $k$ components, whereas this metric between two members from distinct cylinders is nonzero. These two elements are of the following form:

$$\ulx=(l_{i_0},\ldots,l_{i_{k-1}},\!\!\!\!\!\!\!\!\!\!\!\!\stackrel{\scriptstyle k\text{-th component}}{\stackrel{\small{\downarrow}}1}\!\!\!\!\!\!\!\!\!\!\!\!,1,\ldots),$$
$$\uly=(s_{i_0},\ldots,s_{i_{k-1}},M,M,\ldots),$$
where $M$ is a large number and for all $i_0\le i\le i_{k-1}$ if $V_i\neq\mathcal{N}_{\rho_1}(N)$, $l_i,s_i\in V_i$ are \st $\text{diam}(V_i)={\rho_1}(l_i,s_i)$, and if $V_i=\mathcal{N}_{\rho_1}(N)$, $s_i=N$ and $l_i=l$ \st $l\to\infty$. Then, $$ d_1(\ulx,\underline{y})=\sum_{n=0}^{k-1} \theta^n \text{diam}(V_i)+\sum_{n\ge k} \theta^n \left| 1 - \frac{1}{M} \right|= \delta\sum_{n=0}^{k-1} \theta^n  +\sum_{n\ge k} \theta^n \left| 1 - \frac{1}{M} \right|,$$
where, as $M\to\infty$, we have:
$$ d_1(\ulx,\underline{y})=\delta \sum_{n=0}^{k-1} \theta^n +\sum_{n\ge k} \theta^n = \delta\left(\frac{1-\theta^k}{1-\theta}\right)+\frac{\theta^k}{1-\theta}.$$
Hence, the result is 
\begin{equation} \label{Diam_Union_Full}
\text{diam}\left((i_0,\ldots,i_{k-1})_{\mathcal{V}_\delta}\right)=\delta\left(\frac{1-\theta^k}{1-\theta}\right)+\frac{\theta^k}{1-\theta}.
\end{equation} 

We then define $\U_{\delta}:=\left\{(i_0,\ldots,i_{k-1})_{\mathcal{V}_\delta}: 1\le i_l\le K_{\delta}, \ \forall \ 0\le l\le k-1 \right\}$, which is a finite collection with $\#(\U_{\delta})=(K_\delta)^k$ that covers $\Sigma$.

\textit{Step 2.} We now prove that the cover obtained has properties (1) and (2). In the first step for each $k\ge1$, we obtain a finite collection $\U_{\delta}=\left\{(i_0,\ldots,i_{k-1})_{\mathcal{V}_\delta}: 1\le i_l\le K_{\delta}, \ \forall \ 0\le l\le k-1 \right\}$. For this $\U_{\delta}$ to satisfy properties (1) and (2), we assume that $k=\left\lceil \frac{\log(\delta(1-\theta))}{\log(\theta)}\right\rceil$. Note that $\U_{\delta}$ is a pairwise disjoint cover for $\Sigma$.

We can rewrite $(i_0,\ldots,i_{k-1})_{\mathcal{V}_\delta}$ as an element of $\U_{\delta}$ in the form $(i_0,\ldots,i_{k-1})_{\mathcal{V}_\delta}=\bigcup_{\i\in \mathrm{U^*}}[\textbf{i}]$ where $\mathrm{U^*}=\left\{ (x_0,\ldots,x_{k-1}):  x_0\in V_{i_0},\ldots,x_{k-1}\in V_{i_{k-1}}    \right\}\subset \Sigma^*$ is a countable set such that $|\textbf{i}|= \left\lceil\frac{\log(\delta(1-\theta))}{\log(\theta)}\right\rceil$, which implies property (1).

To obtain property (2), let $\ulx\in\Sigma$. Since $\U_{\delta}$ covers $\Sigma$, then there exists $(i_0,\ldots,i_{k-1})_{\mathcal{V}_\delta}$ \st $\ulx\in(i_0,\ldots,i_{k-1})_{\mathcal{V}_\delta}$. Since $k=\left\lceil \frac{\log(\delta(1-\theta))}{\log(\theta)}\right\rceil$, by what we calculated in \eqref{Diam_Cylinder_Full}, we have $\text{diam}(B_\delta(\ulx))\ge\frac{\theta^k}{1-\theta}$. Now, according to \eqref{Diam_Union_Full}, we have \begin{align*}
\text{diam}\left((i_0,\ldots,i_{k-1})_{\mathcal{V}_\delta}\right)&=\delta\left(\frac{1-\theta^{\left\lceil \frac{\log(\delta(1-\theta))}{\log(\theta)}\right\rceil}}{1-\theta}\right)+\frac{\theta^{\left\lceil \frac{\log(\delta(1-\theta))}{\log(\theta)}\right\rceil}}{1-\theta}\le\delta\left(\frac{1-\theta^{\left\lceil \frac{\log(\delta(1-\theta))}{\log(\theta)}\right\rceil}}{1-\theta}\right)+\delta\\&= \delta \left( \frac{1-\theta^{\left\lceil \frac{\log(\delta(1-\theta))}{\log(\theta)}\right\rceil}}{1-\theta}+1\right).
\end{align*} Since $1\le\frac{1-\theta^k}{1-\theta}<\frac{1}{1-\theta}$ for all $k>0$, we obtain $$\text{diam}(B_{\delta}(\ulx))=\delta\le \delta\left(\frac{1-\theta^{\left\lceil \frac{\log(\delta(1-\theta))}{\log(\theta)}\right\rceil}}{1-\theta}\right)< \text{diam}\left((i_0,\ldots,i_{k-1})_{\mathcal{V}_\delta}\right),$$ and consequently $B_{\delta}(\ulx)\subset (i_0,\ldots,i_{k-1})_{\mathcal{V}_\delta}.$
Setting $1<T:= \frac{1}{1-\theta}+1<\infty$, we have 
$$\text{diam}(B_{T\delta}(\ulx))=\delta \left(\frac{1}{1-\theta}+1\right)> \delta\left(\frac{1-\theta^{\left\lceil \frac{\log(\delta(1-\theta))}{\log(\theta)}\right\rceil}}{1-\theta}+1\right)\ge \text{diam}\left((i_0,\ldots,i_{k-1})_{\mathcal{V}_\delta}\right),$$

we conclude that
$(i_0,\ldots,i_{k-1})_{\mathcal{V}_\delta} \subset B_{T\delta}(\ulx).$

Hence, for $k=\left\lceil \frac{\log(\delta(1-\theta))}{\log(\theta)}\right\rceil$, we found $T=\frac{1}{1-\theta}+1$, such that for all $\ulx\in \Sigma$, there exists $(i_0,\ldots,i_{k-1})_{\mathcal{V}_\delta}\in\U_{\delta}$ \st $\ulx\in(i_0,\ldots,i_{k-1})_{\mathcal{V}_\delta}\in\U_{\delta}$ and we have  $B_{\delta}(\ulx)\subset(i_0,\ldots,i_{k-1})_{\mathcal{V}_\delta}\subset B_{T\delta}(\ulx)$.

Notice that, because $k=\left\lceil \frac{\log(\delta(1-\theta))}{\log(\theta)}\right\rceil$ and $K_\delta=\left\lceil2\sqrt{\frac{1}{\delta}}\left(1+o\left(\sqrt{\delta}\right)\right)\right\rceil$, the total number of elements in $\U_{\delta}$ is consequently given by $$\#(\U_{\delta})=(K_\delta)^k=\left\lceil\left(2\sqrt{\frac{1}{\delta}}\left(1+o\left(\sqrt{\delta}\right)\right)\right) ^ { \frac{\log(\delta(1-\theta))}{\log(\theta)}}\right\rceil.$$

Here, it should be noted that $\U_{\delta}$ is minimal. In fact, for any collection of cylinders of length less than $k$ in $\Sigma$, this collection can not be obtained. So, the desired conclusion of the proposition is obtained, and the proof is complete.
\end{proof}

\begin{remark}
Note that since $\U_{\delta}$ is a partition, property (2) for $\U_\delta$ is equivalent to (U)(a) in condition \hyperref[Condition (U)]{\textbf{(U)}}. In addition, property (1) is the same as (U)(d) in condition \hyperref[Condition (U)]{\textbf{(U)}}. (U)(b) and (U)(c) in condition \hyperref[Condition (U)]{\textbf{(U)}} are also the natural properties for $\U_{\delta}$.
\end{remark}

Note that at the first step of proving Proposition \ref{Lemma:Totally bounded in Full shift}, we actually showed how to use the totally boundedness of $\rho_1$ for $\N$ to show that $d_1$ is totally bounded for $\Sigma$.

There exists a connection between the expected hitting time (\ref{Def: hitting time}) and the expected cover time, which plays an important role in the proof of our results. To this end, we first present a remark that we use in the proof of the following theorem.



\begin{remark}\label{Remark: thida}
The collection $\U_\delta$ inherits the $\psi$-mixing property. In fact, for each $(i_0,\ldots,i_{k-1})_{\mathcal{V}_\delta}$ and $(z_0,\ldots,z_{k-1})_{\mathcal{V}_\delta}$ in $\U_\delta$ we can consider
$$ \frac{{\mu}\left(\cup\left\{[x_{i_0},\ldots,x_{i_{k-1}},y_0,\ldots,y_{j-1},x_{z_0},\ldots,x_{z_{k-1}}]:x_{i_l}\in V_{i_l},x_{z_l}\in V_{z_l},\ 0\le l\le k-1\right\}\right) }{{\mu}((i_0,\ldots,i_{k-1})_{\mathcal{V}_\delta}){\mu}((z_0,\ldots,z_{k-1})_{\mathcal{V}_\delta})},$$
where the union is taken over $\Sigma_j$, all words $y_0, \ldots, y_{j-1}$ of length $j$, and since $(i_0,\ldots,i_{k-1})_{\mathcal{V}_\delta}$ and $(z_0,\ldots,z_{k-1})_{\mathcal{V}_\delta}$ are disjoint, this can be rewritten as 
\begin{align*}
     &\frac{\sum_{\substack{x_{i_l}\in V_{i_l} \\ 0\le l\le k-1}} \ \ \sum_{{\substack{x_{j_l}\in V_{j_l} \\ 0\le l\le k-1}}}{\mu}(\cup_{y_0, \ldots, y_{j-1}\in \Sigma_j}[x_{i_0},\ldots,x_{i_{k-1}},y_0,\ldots,y_{j-1},x_{z_0},\ldots,x_{z_{k-1}}])}{\sum_{\substack{x_{i_l}\in V_{i_l} \\ 0\le l\le k-1}} \ {\mu}([x_{i_0},\ldots,x_{i_{k-1}}])\sum_{{\substack{x_{j_l}\in V_{j_l} \\ 0\le l\le k-1}}} {\mu}([x_{z_0},\ldots,x_{z_{k-1}}])}\\& \le \frac{\sum_{\substack{x_{i_l}\in V_{i_l} \\ 0\le l\le k-1}} \ \ \sum_{{\substack{x_{j_l}\in V_{j_l} \\ 0\le l\le k-1}}}{\mu}(\cup_{y_0, \ldots, y_{j-1}\in \Sigma_j}[x_{i_0},\ldots,x_{i_{k-1}},y_0,\ldots,y_{j-1},x_{z_0},\ldots,x_{z_{k-1}}])}{\sum_{\substack{x_{i_l}\in V_{i_l} \\ 0\le l\le k-1}} \sum_{{\substack{x_{j_l}\in V_{j_l} \\ 0\le l\le k-1}}}\ {\mu}([x_{i_0},\ldots,x_{i_{k-1}}]) {\mu}([x_{z_0},\ldots,x_{z_{k-1}}])}\\& \le \frac{\sum_{\substack{x_{i_l}\in V_{i_l} \\ 0\le l\le k-1}} \ \ \sum_{{\substack{x_{j_l}\in V_{j_l} \\ 0\le l\le k-1}}}{\mu}([x_{i_0},\ldots,x_{i_{k-1}}]) {\mu}([x_{z_0},\ldots,x_{z_{k-1}}])\left(\psi(j)+1\right)}{\sum_{\substack{x_{i_l}\in V_{i_l} \\ 0\le l\le k-1}} \sum_{{\substack{x_{j_l}\in V_{j_l} \\ 0\le l\le k-1}}}\ {\mu}([x_{i_0},\ldots,x_{i_{k-1}}]) {\mu}([x_{z_0},\ldots,x_{z_{k-1}}])}
     =\psi(j)+1,
\end{align*}
where in the last inequality we applied the upper bound of $\psi$-mixing property for cylinders. Similarly, we can find the lower bound, so we have the $\psi$-mixing property for elements in $\U_\delta$.

\end{remark}
\begin{notation*}
We introduce some notation that helps us for the sake of brevity in the proof of the next theorem. Assume that $U=(i_0,\ldots,i_{k-1})_{\mathcal{V}_\delta}, W=(j_0,\ldots,j_{k-1})_{\mathcal{V}_\delta}\in\U_\delta$ and $n, m\in\N_0$ \st $n\ge m$. If we are interested in points in $U$ whose their $m$-th to $n$-th iterates under the shift map have no intersection with the set $W$, we denote the set of all such points by
$$\F_{m-1}^{n-1}(U:W):=\left\{ \ulx\in U:W\cap  \bigcup_{i=m-1}^{n-1} [x_{i+1},\ldots,x_{i+k}]=\varnothing  \right\},$$ 
so, in the case we seek the same property for points in $\Sigma$, we use $\F_{m-1}^{n-1}(\Sigma:W)$, and the set of all points in $U$, where their all $n$ initial iterates of the shift map do not hit $W$ is $\F_0^{n-1}(U:W)$.
Moreover, we use ${\F_{m-1}^{n-1}(U:W)}^c$ for the complement of $\F_{m-1}^{n-1}(U:W)$, i.e. $${\F_{m-1}^{n-1}(U:W)}^c:=\left\{\ulx\in U:W\cap  \bigcup_{i=m-1}^{n-1} [x_{i+1},\ldots,x_{i+k}]\neq \varnothing  \right\}.$$
\end{notation*}

The following theorem not only contributes to the proof of the main result but is also of independent interest.
In \cite[Theorem 4.1]{MikeNatali}, bounds for $\E(\tau_U)$ were obtained, where $U$ is a subinterval i.e., for some $0<c<C<\infty$,
$$\frac{c}{\mu( U)} \le\E_\mu\left(\tau_{U}\right)\le \frac{C}{\mu( U)}.$$
We now present, in the following theorem, the corresponding result for $\E_\mu\left(\tau_{U}\right)$ depending on metric $d_1$, where $U\in\U_\delta$. The key point is that the bounds for this result involve uniform constants as in the interval case.

\begin{theorem}\label{Lemma: upper bound for E(hitting time)}
Assume that $(\Sigma,\sigma)$ is the countable full shift equipped with the metric $d_1$ as in \eqref{Def:Metric d}. Let $\phi:\Sigma\to\R$ be a potential \st $\phi\big|_{[n]}\le -\kappa\log (n+1)$ for some $\kappa>1$. Suppose that $ U=(i_0,\ldots,i_{k-1})_{\mathcal{V}_\delta}$ is an element of finite open cover $\mathcal{U}_\delta$ as defined in Proposition \ref{Lemma:Totally bounded in Full shift}. Then there exists $0<c<1<C<\infty$ such that 
\begin{equation}\label{Equ:Prop of bounds for E(tau)}
\frac{c}{\mu( U)} \le\E_\mu\left(\tau_{U}(\ulx)\right)\le \frac{C}{\mu( U)}.\end{equation}
\end{theorem}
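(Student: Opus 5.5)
The plan is to prove the two bounds separately. Write $U=(i_0,\ldots,i_{k-1})_{\mathcal{V}_\delta}$, where $k=\left\lceil\frac{\log(\delta(1-\theta))}{\log\theta}\right\rceil$. The main tool is the $\psi$-mixing property for elements of $\U_\delta$ established in Remark \ref{Remark: thida}. Since $\mu$ is Gibbs, we have $\psi(j)\to0$, so we can fix a gap $g\in\N$, independent of $\delta$ and $U$, with $\psi(g)\le\frac12$. Because $U$ is a union of $k$-cylinders, the event $\sigma^n\ulx\in U$ depends only on the coordinates $x_n,\ldots,x_{n+k-1}$. The uniformity of the constants will come from the fact that $\psi$ does not depend on the word lengths.

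\emph{Lower bound.} We use $\E_\mu(\tau_U)=\sum_{n\ge0}\mu(\tau_U>n)$ together with $\mu(\tau_U\le n)\le\sum_{i=1}^n\mu(\sigma^{-i}U)=n\mu(U)$, which holds by invariance. This gives $\mu(\tau_U>n)\ge1-n\mu(U)$. Summing over $0\le n\le \lfloor 1/(2\mu(U))\rfloor$ yields $\E_\mu(\tau_U)\ge \frac{c}{\mu(U)}$ with, for instance, $c=1/8$, at least once $\mu(U)\le 1/2$. The few cases with large $\mu(U)$ are handled trivially, since $\tau_U\ge1$. No mixing is needed for this direction.

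\emph{Upper bound.} We use a block argument. Set $L:=k+g$ and consider the times $t_m:=mL$. Let $E_m:=\{\sigma^{t_i}\ulx\notin U,\ 1\le i\le m\}$, written in the notation $\F$ introduced above. Each $E_m$ is a union of cylinders of length $mL+k$. We claim that
$$\mu(E_{m+1})\le \mu(E_m)\bigl(1-\tfrac12\mu(U)\bigr).$$
To see this, note that $E_{m+1}=E_m\setminus \sigma^{-t_{m+1}}U$, and there is a gap of $g$ free symbols between the cylinder word determining $E_m$ and the $U$-block at time $t_{m+1}$. Applying the lower $\psi$-mixing inequality cylinder by cylinder, exactly as in Remark \ref{Remark: thida} but with a general cylinder of arbitrary length on the left, gives $\mu(E_m\cap\sigma^{-t_{m+1}}U)\ge(1-\psi(g))\mu(E_m)\mu(U)$. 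Iterating, $\mu(E_m)\le(1-\tfrac12\mu(U))^m$. Since $\tau_U>t_m$ implies $\ulx\in E_m$, we get
$$\E_\mu(\tau_U)\le L\sum_{m\ge0}\mu(E_m)\le \frac{2L}{\mu(U)}.$$

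\emph{Main obstacle.} The factor $L=k+g\approx\log(1/\delta)$ is not uniform, so the estimate above only gives $\E_\mu(\tau_U)\lesssim \log(1/\delta)/\mu(U)$. To remove this factor, I would follow \cite[Theorem 4.1]{MikeNatali} via Kac's lemma. Write $\E_\mu(\tau_U)\le \E_\mu(\tau_U\mathbbm 1_{\tau_U\le L})+\E_\mu(\tau_U\mathbbm 1_{\tau_U>L})$. The first term is at most $L$, which is at most $C/\mu(U)$ because the Gibbs bound gives $\mu(U)\le \max_{\text{words}}\mu\lesssim\eta^k$ with $\eta<1$, hence $k\ll1/\mu(U)$. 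On $\{\tau_U>L\}$ I would compare the law of $\sigma^{L}\ulx$ with $\mu$ by $\psi$-mixing. This reduces the estimate to controlling the probability of short returns, $\mu_U(\tau_U\le L)$, and to the consecutive-gap structure, $\mu(\tau_U>n+L)\le(1+\psi(g))\,\mu(\tau_U>n)\,\mu(\tau_U>L-g\mid\cdot)$. That in turn gives geometric decay at rate $1-c\mu(U)$ per step of length $O(1)$ rather than $O(k)$, provided short returns are rare. I expect the proof that $\mu_U(\tau_U\le k)$ is bounded away from $1$ uniformly in $\delta$ (a non-periodicity estimate for the cover elements, obtained from the Gibbs property with $\phi\le-\kappa\log(n+1)$) to be the hardest step. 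Combining this with Kac's lemma $\E_{\mu_U}(\tau_U)=1/\mu(U)$ then gives the uniform constant $C$.
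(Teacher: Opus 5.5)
The upper bound has a genuine gap. Your block argument is correct, but it only gives $\E_\mu(\tau_U)\le 2(k+g)/\mu(U)$ with $k\asymp\log(1/\delta)$. The step that removes this factor is announced in your last paragraph rather than carried out.

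What is actually needed is a uniform comparison. You need constants $c_0>0$ and $n_0=\mathcal{O}(k)$, independent of $\delta$ and of $U\in\U_\delta$, such that $\mu_U(\tau_U\ge n)\ge c_0\,\mu(\tau_U\ge n)$ for all $n\ge n_0$. By invariance, $\mu(\tau_U>n-1)-\mu(\tau_U>n)=\mu(U)\,\mu_U(\tau_U>n-1)$. So your ``geometric decay at rate $1-c\mu(U)$ per step of length $O(1)$'' is equivalent to this inequality. Summing over $n$ and applying Kac's lemma then gives $\E_\mu(\tau_U)\le n_0+1/(c_0\mu(U))$, and $n_0\lesssim k\ll 1/\mu(U)$.

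This is exactly the paper's route:
\begin{enumerate}
\item It writes $\mu_U(\tau_U\ge n)=\mu_U(\F_0^{n-2}(U:U))$ as the measure of points of $U$ with no visit in the window $[k+m,n-2]$, minus those that also have an early visit.
\item It bounds the first term below by $(1-\psi(m))\,\mu(\tau_U\ge n-k-m)$, using $\psi$-mixing across a gap $m$.
\item It bounds the second term above by $\tilde\theta(1+\psi(m))^2\,\mu(\tau_U\ge n-k-m)$, using $\mu_U(\F_0^{k+m-1}(U:U)^c)\le\tilde\theta<1$.
\end{enumerate}

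Your proposal supplies neither of the two ingredients this needs. First, it does not prove the uniform short-return bound $\mu_U(\tau_U\le k+m)\le\tilde\theta<1$. Second, it does not decouple that early-return event from the later window; the two events share coordinates, so this needs its own argument. The displayed inequality with the factor $\mu(\tau_U>L-g\mid\cdot)$ is also not a well-defined statement.

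Note that the short-return bound must hold for the union $U$, not just for single cylinders. A point of $[\mathbf{i}]\subset U$ can re-enter $U$ through a different word of $\mathrm{U}^*$. The cylinder-by-cylinder estimate the paper imports from \cite[p16]{MikeNatali} only controls returns of $[\mathbf{i}]$ to itself. You have located the real difficulty correctly, but it is the heart of the theorem and it remains unproved in your proposal.

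Your lower bound, by contrast, is correct and simpler than the paper's. The union bound $\mu(\tau_U\le n)\le n\mu(U)$, summed over $0\le n\le\lfloor 1/(2\mu(U))\rfloor$, gives $\E_\mu(\tau_U)\ge 3/(8\mu(U))$ for every $U$ of positive measure, using invariance alone. The paper instead combines several ingredients:
\begin{itemize}
\item Kac's lemma and the upper $\psi$-mixing bound;
\item the exponential law $\mu(\tau_U\ge n)=e^{-\mu(U)n}(1+o(1))$ from \cite{Saeed}, whose error term would have to be uniform in $U$;
\item the exponential decay of $\mu(U)$ in $k$, to keep $e^{\mu(U)(m+k)}$ bounded.
\end{itemize}
Your argument needs none of this.

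One small correction: $\mu(U)\le\max_{\text{words}}\mu$ is false, since $U$ is a union of many $k$-cylinders. Use instead the paper's product bound $\mu(U)\le K\prod_{j=0}^{k-1}\sum_{u\in V_{i_j}}(u+1)^{-\kappa}$ to get $k\ll 1/\mu(U)$.
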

\begin{proof}
 The proof of this theorem relies on the use of Kac's lemma \cite[Lemma 2$'$]{Kac47}. This lemma states that for every measurable set $A$, the mean of $\mu_A \left( \left\{\ulx\in A: \tau_A(\ulx)\ge n \right\}\right)$ in an ergodic system is equal to $\frac{1}{\mu(A)}$. Therefore, the approach taken in this proof is to establish an inequality between $\mu_U \left( \left\{\ulx\in U: \tau_U(\ulx)\ge n \right\}\right)$ and $\mu \left( \left\{\ulx\in U: \tau_U(\ulx)\ge n \right\}\right)$, which was obtained earlier in \cite[Proposition 3.1]{MikeNatali} for cylinders. Then, by summing both sides, we obtain their respective mean. As a result, one side of the inequality corresponds to $\E_\mu(\tau_U(\ulx))$, while the other side is expressed in terms of $\frac{1}{\mu(U)}$, which serves as a bound. We first present some preliminaries used in both bounds, then establish the upper bound, and finally derive the lower bound on \eqref{Equ:Prop of bounds for E(tau)}.

Recall that $k=\left\lceil \frac{\log(\delta(1-\theta))}{\log(\theta)}\right\rceil$ for some $\delta>0$ and $0<\theta<1$. Fix $m\in \N $ and assume that $n\ge k+m$. We have the following expression
\begin{align*}
&\sigma^{-1}\left(\left\{\ulx\in U: \tau_U(\ulx)\ge n\right\}\right)\\&\quad\qquad\quad \qquad\qquad= \sigma^{-1}\left(\left\{ \ulx\in U: \ulu\notin \bigcup_{i=0}^{n-1} [x_{i+1},\ldots,x_{i+k}],\ \forall \ulu\in U \right\}\right) \\&\quad\qquad\quad \qquad\qquad= \left\{ \ulx\in U: \ulu\notin \bigcup_{i=0}^{n-2} [x_{i+1},\ldots,x_{i+k}],\ \forall \ulu\in U \right\} = \F_0^{n-2}(U:U).\end{align*} 
So, since the system is measure preserving then,
\begin{equation} 
\label{Eq: Same measure for sigma inverse}
\mu_U \left(\left\{\ulx\in U: \tau_U(\ulx)\ge n \right\}\right)=\mu_U\left(\F_0^{n-2}(U:U)\right). 
\end{equation}
To use the $\psi$-mixing property, given $m\in\N$, we can rewrite $\mu_U\left(\F_0^{n-2} (U:U)\right)$ as 
\begin{equation}
\mu_U\left(\F_{k+m}^{n-2}(U:U)\right)  -\mu_U\left( {\F_0^{k+m-1}(U:U)}^c\cap \F_{k+m}^{n-2}(U:U) \right).\label{Terms I&II}\end{equation}
We can consider $\F_{k+m}^{n-2}(U:U)$ as follows
\begin{align*}
    &\F_{k+m}^{n-2}(U:U)\\&\ \ = \bigcup\left\{[u_0,\ldots,u_{k-1},x_k,\ldots,x_{k+m},x_{k+m+1},\ldots,x_{n+k-2}]:\ulu\notin \bigcup_{i=k+m}^{n-2} [x_{i+1},\ldots,x_{i+k}], \forall \ulu\in U\right\},
\end{align*}
where the union is taken over all words $(x_k,\ldots,x_{k+m})\in\Sigma_m$ of length $m$ such that $$(u_0,\ldots,u_{k-1},x_k,\ldots,x_{k+m},x_{k+m+1},\ldots,x_{n+k-2})\in\Sigma_{n+k-2}.$$ 

The elements in ${\F_{k+m}^{n-2}(U:U)} $ have a nice property: on each cylinder $$[u_0,\ldots,u_{k-1},x_k,\ldots,x_{k+m},x_{k+m+1},\ldots,x_{n+k-2}],$$ a part $(x_k,\ldots,x_{k+m})$ which can be considered as a gap (with length $m$) between two subsets, $U$ at the beginning and at the end of the cylinder, all $(x_{k+m+1},\ldots,x_{n+k-2})\in \Sigma_{n-m-3}$ where $\ulu\notin \bigcup_{i=k+m}^{n-2} [x_{i+1},\ldots,x_{i+k}],\ \forall \ u_0\in V_{i_0},\ldots,u_{k-1}\in V_{i_{k-1}}$, which has equal measure to $\F_{0}^{n-k-m-2}(\Sigma:U)$. Hence, we can apply the property of $\psi$-mixing \eqref{Equ:psi mixing} on ${\F_{k+m}^{n-2}(U:U)} $ by Remark \ref{Remark: thida},

\begin{equation}\label{Inequality: psi-mixing of U}
    1-\psi(m)\le\frac{\mu\left( {\F_{k+m}^{n-2}(U:U)} \right)}{\mu(U)\mu\left( \F_{0}^{n-k-m-2}(\Sigma:U) \right)} \le \psi(m)+1.\end{equation}
    \textit{Proof of upper bound on \eqref{Equ:Prop of bounds for E(tau)}.} We divide the proof into two steps. First, we establish a lower bound for \eqref{Eq: Same measure for sigma inverse}, and in the second part, we obtain an upper bound for \eqref{Equ:Prop of bounds for E(tau)}. 
    
    \textbf{First step.} To obtain a lower bound on \eqref{Eq: Same measure for sigma inverse}, using \eqref{Terms I&II} we derive a lower bound for $\mu_U\left(\F_0^{n-2} (U:U)\right)$, which in turn provides a lower bound for $\mu_U \left(\left\{\ulx\in U: \tau_U(\ulx)\right\}\right)$ by \eqref{Eq: Same measure for sigma inverse}. To find a lower bound for $\mu_U\left(\F_0^{n-2} (U:U)\right)$, we first find a lower bound for $\mu_U\left(\F_{k+m}^{n-2}(U:U)\right)$ in terms of $k$ and $m$ and then an upper bound for $\mu_U\left( {\F_0^{k+m-1}(U:U)^c}\cap \F_{k+m}^{n-2}(U:U) \right)$. We can find a lower bound for $\mu_U\left(\F_{k+m}^{n-2}(U:U)\right)$ by \eqref{Inequality: psi-mixing of U} as below,
\begin{align*}
     &\mu_U\left(\F_{k+m}^{n-2}(U:U)\right)=\frac{1}{\mu(U)} \mu\left(\F_{k+m}^{n-2}(U:U)\right)\ge \frac{1}{\mu(U)} \mu(U)\mu\left( \F_{0}^{n-k-m-2}(\Sigma:U) \right)(1-\psi(m))\\&= \mu \left(\left\{\ulx\in \Sigma: \tau_U(\ulx)\ge n-m-k \right\}\right)(1-\psi(m)).
\end{align*}
 Now, to find an upper bound for $\mu_U\left( {\F_0^{k+m-1}(U:U)}^c\cap \F_{k+m}^{n-2}(U:U) \right)$, we are dealing with three almost independent conditions (due to the $\psi$-mixing property) on $U$. In addition to requiring the elements of the set to belong to $U$, there are two additional conditions. 

To find an upper bound for $\mu_U\left( {\F_0^{k+m-1}(U:U)^c}\right)$, we rewrite this
\begin{align*}
&\mu_U\left( {\F_0^{k+m-1}(U:U)^c}\right)\\& \ \ \ \ =\mu_U\left(\left\{ \ulx\in U: \ \exists\ u_0\in V_{i_0},\ldots,u_{k-1}\in V_{i_{k-1}},\ [u_0,\ldots,u_{k-1}]\in \left\{[x_{i+1},\ldots,x_{i+k}]\right\}_{i=0}^{k+m-1} \right\}\right).\end{align*}

As $U=\bigcup_{\mathbf{i}\in \mathrm{U^*}}[\mathbf{i}]$ by property (1) in Proposition \ref{Lemma:Totally bounded in Full shift} for $\U_\delta$, where this union is pairwise disjoint, we have

\begin{align*}
   & \mu_U\left(\left\{ \ulx\in U:\ \exists\ u_0\in V_{i_0},\ldots,u_{k-1}\in V_{i_{k-1}}, \ [u_0,\ldots,u_{k-1}]\in \left\{[x_{i+1},\ldots,x_{i+k}]\right\}_{i=0}^{k+m-1} \right\}\right)\\&\ \ \ \ =\frac{ \mu\left(\left\{ \ulx\in U:\ \exists\ u_0\in V_{i_0},\ldots,u_{k-1}\in V_{i_{k-1}}, \ [u_0,\ldots,u_{k-1}]\in \left\{[x_{i+1},\ldots,x_{i+k}]\right\}_{i=0}^{k+m-1} \right\}\right)}{\mu(U)}\\& \ \ \ \  = \frac{\mu\left(\bigcup_{\mathbf{i}\in \mathrm{U^*}}\left\{ \ulx\in [\i]:\ [\i]\in \left\{[x_{i+1},\ldots,x_{i+k}]\right\}_{i=0}^{k+m-1} \right\}\right)}{\sum_{\mathbf{i}\in \mathrm{U^*}}\mu([\textbf{i}]) }\\& \ \ \ \ \le \frac{\sum_{\mathbf{i}\in \mathrm{U^*}}\mu\left(\left\{ \ulx\in [\i]:\ [\i]\in \left\{[x_{i+1},\ldots,x_{i+k}]\right\}_{i=0}^{k+m-1} \right\}\right)}{\sum_{\mathbf{i}\in \mathrm{U^*}}\mu([\textbf{i}]) }\\&
   \ \ \ \ = \frac{\sum_{\mathbf{i}\in \mathrm{U^*}}\mu_{[\textbf{i}]}\left(\left\{ \ulx\in [\i]:\ [\i]\in \left\{[x_{i+1},\ldots,x_{i+k}]\right\}_{i=0}^{k+m-1} \right\}\right) \mu([\textbf{i}])}{\sum_{\mathbf{i}\in \mathrm{U^*}}\mu([\textbf{i}]) } \le \tilde{\theta},
\end{align*}

where in the last inequality we use a result in \cite[p16]{MikeNatali}, where they proved for each $\i$ there exists a uniform $\tilde{\theta}<1$ such that
$ \mu_{[\i]}\left( \left\{ \ulx\in [\i]: [\i]\in \left\{[x_{i+1},\ldots,x_{i+k}]\right\}_{i=0}^{k+m-1} \right\}\right)\le \tilde{\theta}.$ Hence, we conclude that $\mu_U\left( {\F_0^{k+m-1}(U:U)^c}\right) \le \tilde{\theta}$. 


 
To find an upper bound for $\mu_U(\F_{k+m}^{n-2}(U:U))$, we use the upper bound in \eqref{Inequality: psi-mixing of U}, hence:

\begin{align*}
    &\mu_U\left( {\F_0^{k+m-1}(U:U)^c}\cap \F_{k+m}^{n-2}(U:U) \right)  = \mu_U\left( {\F_0^{k+m-1}(U:U)^c}\right)\mu_U\left(\F_{k+m}^{n-2}(U:U)\right)(1+\psi(m))\\&\qquad \qquad \qquad\qquad\ \le \left(\tilde{\theta}\cdot \frac{1}{\mu(U)} \cdot\mu(U) \mu \left(\left\{\ulx\in \Sigma: \tau_U(\ulx)\ge n-m-k \right\}\right)\left( \psi(m)+1\right)\right)(1+\psi(m))\\&\qquad\qquad\qquad\qquad  \ = \left(\tilde{\theta} \mu \left(\left\{\ulx\in \Sigma: \tau_U(\ulx)\ge n-m-k \right\}\right) \left( \psi(m)+1\right)^2\right),
\end{align*}
where in the first equality, we use Remark \ref{Remark: thida} with the error term $(1+\psi(m))$. Hence, we can find the upper bound for $\E(\tau_U(\ulx))$ as follows, by \eqref{Terms I&II} and the above estimates,
\begin{align*}
    &\mu_U \left( \left\{\ulx\in U: \tau_U(\ulx)\ge n \right\}\right) \ge \bigg(\mu \left\{\ulx\in \Sigma: \tau_U(\ulx)\ge n-m-k \right\}(1-\psi(m)) \\& \qquad \quad \qquad \qquad \qquad \qquad \qquad \qquad \ \ \ \ \ - \tilde{\theta}\left( \psi(m)+1\right)^2 \mu \left(\left\{\ulx\in \Sigma: \tau_U(\ulx)\ge n-m-k \right\}\right)\bigg)\\& \ \ \qquad \quad \qquad \qquad \ \ \ \ \ \ \ \ \ \ = \left((1-\psi(m))- \tilde{\theta}\left( \psi(m)+1\right)^2\right) \mu \left(\left\{\ulx\in \Sigma: \tau_U(\ulx)\ge n-m-k \right\}\right).
\end{align*}
As $n\ge k+m$, we always have $\mu \left( \left\{\ulx\in \Sigma: \tau_U(\ulx)\ge n-m-k \right\} \right)\ge  \mu \left(\left\{\ulx\in \Sigma: \tau_U(\ulx)\ge n \right\} \right)$,  then,
$$\mu_U \left(\left\{\ulx\in U: \tau_U(\ulx)\ge n \right\}\right)\ge   \left((1-\psi(m))- \tilde{\theta}\left( \psi(m)+1\right)^2\right)   \mu \left(\left\{\ulx\in \Sigma: \tau_U(\ulx)\ge n \right\} \right).$$

We can fix some large enough $m$ such that $\psi(m)$ is small enough, then
\begin{equation}\label{Ineq: lwr bound for 1st step of upr bound for E(tau)}
\mu_U \left(\left\{\ulx\in U: \tau_U(\ulx)\ge n \right\}\right)\ge ({1-\tilde{\theta}}) \mu \left(\left\{\ulx\in \Sigma: \tau_U(\ulx)\ge n \right\} \right).\end{equation}

\textbf{Second step.} We now use the inequality \eqref{Ineq: lwr bound for 1st step of upr bound for E(tau)} obtained in the first step to derive an upper bound for \eqref{Equ:Prop of bounds for E(tau)}. By summing over $n$ in \eqref{Ineq: lwr bound for 1st step of upr bound for E(tau)}, for large $n$,

$$\frac{\sum_n \mu_U \left( \left\{\ulx\in U: \tau_U(\ulx)\ge n \right\}\right)}{1-\tilde{\theta}}\ge  \sum_n \mu \left( \left\{\ulx\in \Sigma: \tau_U(\ulx)\ge n \right\}\right)= \E(\tau_U(\ulx)).$$

Now, by Kac's lemma we have $\sum_n \mu_U \left( \left\{\ulx\in U: \tau_U(\ulx)\ge n \right\}\right)=\frac{1}{\mu(U)}$. Thus, we obtain
$$ \frac{1}{(1-\tilde{\theta})}\cdot\frac{1}{\mu(U)}\ge \E(\tau_U(\ulx)).$$

Since $\tilde{\theta}<1$, setting $C:=\frac{1}{1-\tilde{\theta}}>1$, $$\frac{C}{\mu(U)}\ge \E(\tau_U(\ulx)).$$
\textit{ Proof of lower bound on \eqref{Equ:Prop of bounds for E(tau)}}. To find a lower bound for $ \E(\tau_U(\ulx))$, we use \eqref{Terms I&II} and the lower bound of \eqref{Inequality: psi-mixing of U} to obtain
\begin{align}\label{Ineq: used in proof of lower bound of E}
   \notag \mu_U \left( \left\{\ulx\in U: \tau_U(\ulx)\ge n \right\}\right)& \notag=\mu_U\left(\F_{k+m}^{n-2}(U:U)\right)  -\mu_U\left( {\F_0^{k+m-1}(U:U)^c}\cap \F_{k+m}^{n-2}(U:U) \right)\\& \notag \le \mu_U\left(\F_{k+m}^{n-2}(U:U)\right)=\frac{1}{\mu(U)} \mu\left(\F_{k+m}^{n-2}(U:U)\right) \\&\notag \le \frac{1}{\mu(U)} \mu \left(\left\{\ulx\in \Sigma: \tau_U(\ulx)\ge n-m-k \right\}\right) \mu(U) \left( \psi(m)+1\right)\\&=  \mu \left(\left\{\ulx\in \Sigma: \tau_U(\ulx)\ge n-m-k \right\}\right) \left( \psi(m)+1\right).
\end{align}
Now, based on (\cite[Theorem 3.7]{Saeed}), which states that in any symbolic dynamical system with $\psi$-mixing measure, we have the estimate $\mu \left(\left\{\ulx\in \Sigma: \tau_U(\ulx)\ge n \right\}\right)=  e^{-\mu(U)(n)}(1+o(1))$, for large $n$, we conclude that
\begin{align*}
\mu \left(\left\{\ulx\in \Sigma: \tau_U(\ulx)\ge n-m-k \right\}\right)&=  e^{-\mu(U)(n-m-k)}(1+o(1))=e^{-\mu(U)n}e^{\mu(U)(m+k)}(1+o(1))\\& = \mu \left(\left\{\ulx\in \Sigma: \tau_U(\ulx)\ge n \right\}\right) e^{\mu(U)(m+k)} (1+o(1)).\end{align*}
Using this approximation, we can rewrite $\eqref{Ineq: used in proof of lower bound of E}$ as follows.
$$\frac{\mu_U \left( \left\{\ulx\in U: \tau_U(\ulx)\ge n \right\}\right)}{ \left( \psi(m)+1\right)}\le  \mu \left(\left\{\ulx\in \Sigma: \tau_U(\ulx)\ge n \right\}\right) e^{\mu(U)(m+k)}(1+o(1)).$$
Taking the summation over $n$ and using Kac's lemma, we conclude,
\begin{equation}\label{inequality: proof of lower bound for E_mu}
\frac{1}{ \mu(U)\left( \psi(m)+1\right)}\le e^{\mu(U)(m+k)} (\E_\mu(\tau_U(\ulx))+C'),\end{equation}
where $\sum_n \mu \left( \left\{\ulx\in \Sigma: \tau_U(\ulx)\ge n \right\}\right)o(1)=C'$. It remains to bound $\mu(U)(m+k)$. We first estimate $\mu(U)$. By Definition \ref{Definition:Gibbs measure in CMS} of the Gibbs measure there exists $K$ such that for all $u_0\in V_{i_0},\ldots,u_{k-1}\in V_{i_{k-1}}$ we have $\mu([u_0,\ldots,u_{k-1}])\le K e^{S_k\phi(\underline{x})}$ for all $\underline{x}\in [u_0,\ldots,u_{k-1}]$. As we assume that $\phi\big|_{[n]}\le -\kappa\log (n+1)<0$ for some $\kappa>1$, then for all $\underline{x}\in [u_0,\ldots,u_{k-1}]$, 
\begin{equation*}
    \mu([u_0,\ldots,u_{k-1}])\le K e^{S_k{\phi}(\underline{x})}\le K \prod_{j=0}^{k-1} e^{\log (u_j+1)^{-\kappa}}= K \prod_{j=0}^{k-1}(u_j+1)^{-\kappa}.
\end{equation*}
Now, to calculate $\mu(U)$, there are three general cases for $U\in\U_\delta$ based on the number of elements in $V_{i_j}$. Since one of the $V_{i_j}$ is infinite, i.e. $\mathcal{N}_{\rho_1}(N)$ which is defined in \eqref{Countable set of N}, first case can be considered as all $V_{i_j}$ for $U$ are finite, the second case is that all $V_{i_j}$ are $\mathcal{N}_{\rho_1}(N)$ and the last case is where $k'$ of $V_{i_j}$ are finite for $1<k'<k$ but the others are $\mathcal{N}_{\rho_1}(N)$. In the following, we derive an upper bound for $\mu(U)$ for each of these cases.

For the first case, we obtain
\begin{align}\label{calculating_mu}
\notag \mu(U)&=\mu\left(\bigcup_{u_j\in V_{i_j},\ 0\le j\le k-1} [u_0,\ldots,u_{k-1}]\right)= \sum_{u_j\in V_{i_j},\ 0\le j\le k-1}\mu([u_0,\ldots,u_{k-1}])\\&\le K \sum_{u_j\in V_{i_j},\ 0\le j\le k-1}\prod_{j=0}^{k-1}(u_j+1)^{-\kappa} \le K  \prod_{j=0}^{k-1}\sum_{u_j\in V_{i_j}} (u_j+1)^{-\kappa} \le K \prod_{j=0}^{k-1} \#V_{i_j} (\min_{n\in V_{i_j}} (n)+1)^{-\kappa}.
\end{align}

In this case, as $V_{i_j}$ is not $\mathcal{N}_{\rho_1}(N)$, we have two general behaviours for the term $\#V_{i_j} (\min_{n\in V_{i_j}} (n)+1)^{-\kappa}$. Either $V_{i_j}$ is a singleton, which implies that
$$\#V_{i_j} (\min_{n\in V_{i_j}} (n)+1)^{-\kappa}=  (\min_{n\in V_{i_j}} (n)+1)^{-\kappa}\le 2^{-\kappa},$$
or $1<\#V_{i_j}<\infty$, in which case, according to \eqref{Def: Nei N-j}, each $V_{i_j}$ has the form of  $\mathcal{N}_{\rho_1}\left(\left\lceil\frac{N}{n_j}\right\rceil-1\right)$ for some $1\le n_j\le \sqrt{N}$, \st $\#V_{i_j}=\#\mathcal{N}_{\rho_1}\left(\left\lceil\frac{N}{n_j}\right\rceil-1\right)=\left\lceil\frac{N}{n_j^2}\left(1+o\left(\frac1{N}\right)\right)\right\rceil$ and $\min_{n\in V_{i_j}}=\left\lceil\frac{N}{n_j+1}\right\rceil$. Then, 
\begin{align*}
    \#V_{i_j} (\min_{n\in V_{i_j}} (n)+1)^{-\kappa}&= \frac{N}{n_j^2}\left(1+o\left(\frac1{N}\right)\right) \left(\frac{N}{n_j+1}+1\right)^{-\kappa}\\&= \frac{N}{n_j^2}\left(1+o\left(\frac1{N}\right)\right)\left(\frac{N+n_j+1}{n_j+1}\right)^{-\kappa}\\& \le   N\left(1+o\left(\frac1{N}\right)\right)\left(\frac{N+\sqrt{N}+1}{2}\right)^{-\kappa} \\& \le   N\left(1+o\left(\frac1{N}\right)\right)\left(N\right)^{-\kappa}=  \frac{\left(1+o\left(\frac1{N}\right)\right)}{\left(N\right)^{\kappa-1}}<1,
 \end{align*}

where the last inequality is obtained since $\kappa>1$. Hence, it follows that $$0< \#V_{i_j} (\min_{n\in V_{i_j}} (n)+1)^{-\kappa}<1,$$ for all $0\le j\le k-1$. By considering $\eta_1(k):=\prod_{j=0}^{k-1} \#V_{i_j} (\min_{n\in V_{i_j}} (n)+1)^{-\kappa}$, consequently, for the first case we have $\mu(U)\le  K\eta_1(k)$, where $ \eta_1(k)$ decays exponentially to 0 as $k\to \infty$.

 Now, suppose the second case, where for all $0\le j\le k-1$, $V_{i_j}=\mathcal{N}_{\rho_1}(N)$. Using the same method as for \eqref{calculating_mu}, we have
\begin{align*}
     \mu(U)&\le K  \prod_{j=0}^{k-1}\sum_{u_j\in V_{i_j}} (u_j+1)^{-\kappa}=K  \prod_{j=0}^{k-1}\sum_{u_j=N}^\infty (u_j+1)^{-\kappa}=K  \prod_{j=0}^{k-1}\sum_{u_j=N+1}^\infty (u_j)^{-\kappa}.
\end{align*}
Now, since we have
\begin{equation}\label{Ineq: Case Infin. of Proof of lwbd of E(hit)}
    \sum_{u_j=N+1}^\infty (u_j)^{-\kappa}\le \int_N^\infty x^{-\kappa}dx=\left(\frac{N^{-\kappa+1}}{\kappa-1}\right)<1.
\end{equation}
Hence, also in this case we write $\eta_2(k):= \prod_{j=0}^{k-1}\sum_{u_j=N+1}^\infty (u_j)^{-\kappa}$ and $\mu(U)\le  K\eta_2(k)$, which decays exponentially to 0 as $k\to \infty$.



For the last case of $U$, if we consider the number of finite $V_{i_j}$ is $k'$ for some $1<k'<k$, so there are $k-k'$ infinite $V_{i_j}$ such that all are $\mathcal{N}_{\rho_1}(N)$. In this case we use the same method in \eqref{calculating_mu} to obtain the following.

\begin{align*}
    \mu(U)&\le K  \prod_{j=0}^{k-1}\sum_{u_j\in V_{i_j}} (u_j+1)^{-\kappa}= K  \prod_{j=0}^{k'-1}\sum_{u_j\in V_{i_j}} (u_j+1)^{-\kappa} \prod_{j=k'}^{k-1}\sum_{u_j\in V_{i_j}} (u_j+1)^{-\kappa}\\& \le K \prod_{j=0}^{k'-1} \#V_{i_j} (\min_{n\in V_{i_j}} (n)+1)^{-\kappa} \prod_{j=k'}^{k-1}\sum_{u_j=N+1}^\infty (u_j)^{-\kappa}=K \eta_1(k'-1)\eta_2(k-k').
\end{align*}

Since $\eta_1$ and $\eta_2$ decay exponentially to 0, and taking $\eta_3(k):=\eta_1(k).\eta_2(k)$, in the third case also we obtain $\mu(U)\le K\eta_2(k)$, where $\eta_2$ decays exponentially to 0.

This leads us to the conclusion that for all $U\in \U_\delta$, $\mu(U)=\mathcal{O}\left(\eta(k)\right)$, \st $\eta(k)$ has exponential behaviour converging to 0. Thus, we have $e^{\mu(U)(m+k)}\le e^{\eta(k)(m+k)}$.
  
Hence, using this inequality in \eqref{inequality: proof of lower bound for E_mu},

$$\frac{^{1}}{\mu(U)\left( \psi(m)+1\right)}\le \left(e^{\eta(k)(m+k)}\right)\E_\mu(\tau_U(\ulx))+C'.$$

We can fix some large enough $m$ such that $\psi(m)$ is small enough, then for large $k$ we have
$$\frac{c}{\mu(U)}\le\frac{1}{\mu(U)}-C'\le \E_\mu(\tau_U(\ulx)),$$ for some $c<1$. So, the proof of the theorem is complete.
\end{proof}

\begin{remark}
    A statement analogous to Theorem \ref{Lemma: upper bound for E(hitting time)} can be obtained for any cylinder in a countable Markov shift with a Gibbs measure.
\end{remark}
As we use the method used in \cite[Section 5]{MikeNatali} to prove our results, we state a lemma that was established in the proof of Theorem \ref{Th:MN_main_theorem}. Before stating the lemma, we consider the following definitions.

\begin{definition}
Let $\mathcal{U}$ be a finite set of subsets in $\Sigma$. We say $\mathcal{U}$ is a \emph{subpartition} if all elements in this set are pairwise disjoint and there exists a finite or countable set of finite words $\mathrm{U}^*\subset\Sigma^*$ \st $\mathcal{U}=\bigcup_{\mathbf{i}\in \mathrm{U}^*}[\textbf{i}]$.
\end{definition}
Note that a subpartition is not necessarily a partition. Recall that for $\textbf{i}\in \Sigma$, we have $\tau_{\mathcal{U}}(\textbf{i})=\inf\left\{n\ge 1:  \{\textbf{i},\ldots,\sigma^n(\textbf{i})\}  \ \text{hits each}\ U\in \mathcal{U}\right\}$.

We now state the following lemma. To see the proof of this lemma, see \cite[Proof of Theorem 2.2]{MikeNatali}.

\begin{lemma}\label{Lemma:subpartition}
    Assume that $\mathcal{U}=\left\{ \mathrm{U}_1, \ldots,\mathrm{U}_N \right\}$ is a finite subpartition on $\left(\Sigma,\sigma\right)$ equipped with a Gibbs measure $\mu$. Let $\mathcal{U}=\bigcup_{\mathbf{i}\in \mathrm{U}^*}[\mathbf{i}]$ \st $|\mathbf{i}|=L$. Then, there exist $C>1$ \st
    $$  \E_\mu(\tau_\mathcal{U})\le C\left(L+\max_{1\le m\le N}\E_\mu(\tau_{\mathrm{U}_m})\right)\left( \sum_{k=1}^N\frac{1}{k}\right).$$
\end{lemma}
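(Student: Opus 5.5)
The plan is a coupon-collector argument: the sets of $\mathcal{U}$ are visited one after another, the mean waiting time for each new set is controlled by $\max_m\E_\mu(\tau_{\mathrm{U}_m})$, and $\psi$-mixing restores approximate stationarity between consecutive stages. Order the sets by a uniformly random permutation $\pi$ of $\{1,\ldots,N\}$, chosen independently of the orbit. Let $T_j$ be the first time by which the orbit has visited $j$ distinct elements of $\mathcal{U}$, with $T_0=0$. Then $\tau_{\mathcal{U}}=T_N=\sum_{j=1}^N (T_j-T_{j-1})$.

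\textbf{Step 1: reduce to record times.} The harmonic factor arises through the standard Matthews-type trick. The time to cover everything is bounded by a sum over $j$: at stage $j$ we wait for the orbit to hit $\mathrm{U}_{\pi(j)}$, and this costs something only if $\mathrm{U}_{\pi(j)}$ is hit last among $\mathrm{U}_{\pi(1)},\ldots,\mathrm{U}_{\pi(j)}$. By symmetry of $\pi$, that event has probability $1/j$. Summing over $j$ gives the factor $\sum_{k=1}^N 1/k$, provided that, conditionally on the past, the extra waiting time at each stage has mean at most $C(L+\max_m\E_\mu(\tau_{\mathrm{U}_m}))$.

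\textbf{Step 2: uniform control of the extra waiting time, conditioned on the past.} The key estimate I would aim for is
\[
\mu\big(\tau_{\mathrm{U}}\circ\sigma^{n}>t \,\big|\, \mathcal{F}_n\big)\le C\,\mu(\tau_{\mathrm{U}}>t-L-g)+\text{small},
\]
uniformly over cylinder events in $\mathcal{F}_n$ of length $n+L$, where $g$ is a fixed gap. The argument would go as follows.
\begin{itemize}
\item The conditioning event (the orbit's history up to $T_{j-1}$ together with the ordering information) is a union of cylinders of depth $T_{j-1}+L$, because each $\mathrm{U}_m$ is a union of $L$-cylinders.
\item Skip a gap of $L+g$ symbols, where $g$ is fixed with $\psi(g)\le 1/2$.
\item By $\psi$-mixing, equation \eqref{Equ:psi mixing}, extended to unions of cylinders exactly as in Remark \ref{Remark: thida}, the future after the gap is distributed like $\mu$ up to a factor $1\pm\psi(g)$.
\end{itemize}
Integrating the estimate gives the stage cost $\le (L+g)+(1+\psi(g))\E_\mu(\tau_{\mathrm{U}_{\pi(j)}})$. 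This bound is uniform in the stage and in the history, which is exactly what Step 1 needs.

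\textbf{Step 3: assemble.} Combine Steps 1 and 2 via the strong Markov-like decomposition at the stopping times $T_{j-1}$. A stopping time can be written as a countable union over its values, so on each piece the mixing estimate applies with uniform constants. Summing over $j$ and averaging over $\pi$ yields the claim with $C=2(1+g)$, say; the fixed $g$ is absorbed because $L\ge1$.

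The main obstacle is Step 2. The conditioning is on a stopping time and a random ordering, not on a fixed cylinder, so $\psi$-mixing has to be applied to each value of the stopping time and the resulting pieces re-summed. In addition, Matthews' symmetry argument requires that conditioning on "$\mathrm{U}_{\pi(j)}$ not yet hit" does not bias the waiting time. I would handle this by always restarting the hitting clock after a gap, and by bounding the waiting time crudely by the unconditional $\max_m\E_\mu(\tau_{\mathrm{U}_m})$ plus $L$, as in \cite[Proof of Theorem 2.2]{MikeNatali}.
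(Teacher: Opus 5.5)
Your proposal is correct and is essentially the argument the paper defers to, namely Matthews' random-ordering bound as adapted in \cite[Proof of Theorem 2.2]{MikeNatali}. There, as in your plan, the harmonic sum comes from the probability $1/j$ that $\mathrm{U}_{\pi(j)}$ is the last of $\mathrm{U}_{\pi(1)},\ldots,\mathrm{U}_{\pi(j)}$ to be visited, and the additive $L$ pays for skipping the depth-$L$ cylinder that fixes the current position before mixing restores $\mu$ up to a uniform constant. One slip to fix: you must define $T_j$ as the first time by which all of $\mathrm{U}_{\pi(1)},\ldots,\mathrm{U}_{\pi(j)}$ have been visited, not the first time $j$ distinct sets have been visited, since only then is $T_j-T_{j-1}$ nonzero exactly on the event of probability $1/j$ used in Step 1.
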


We use this lemma in establishing the upper bounds in our results.

\subsection{Proof of Theorem \ref{Theorem: main theorem}} In this section we present the proof of Theorem \ref{Theorem: main theorem}.
\begin{proof}[ Proof of Theorem \ref{Theorem: main theorem}] We first prove the upper bound and then proceed to prove the lower bound. The idea of the proof for both bounds is to make use of property (2) for $\U_\delta$. Indeed, having a cover time for $\Sigma$ leads to having a hitting time for $\U_\delta$, and vice versa. This is the key point that, with the help of the propositions in Section \ref{Section: Propositions}, completes the proof.

\textit{ Proof of upper bound}. Let $\delta>0$. By Proposition \ref{Lemma:Totally bounded in Full shift}, there exists pairwise disjoint open cover $\U_{\delta}=\left\{(i_0,\ldots,i_{k-1})_{\mathcal{V}_\delta}: 1\le i_l\le K_{\delta}, \ \forall \ 0\le l\le k-1 \right\}$ satisfying properties (1) and (2) for $(\Sigma,\sigma)$, where $K_{\delta}=\left\lceil2\sqrt{\frac{1}{\delta}}\left(1+o\left(\sqrt{\delta}\right)\right)\right\rceil$. In particular, there exist $1<T<\infty$ \st for $\ulx\in\Sigma$, there is a set $(i_0,\ldots,i_{k-1})_{\mathcal{V}_\delta}$ with $B_{\delta}(\ulx)\subset (i_0,\ldots,i_{k-1})_{\mathcal{V}_\delta}\subset B_{T\delta}(\ulx)$. If for all $\ulx\in \Sigma$, $\{\ulx,\ldots,\sigma^n(\ulx)\}$ hits each element of $\U_{\delta/2T}$, then $\{\ulx,\ldots,\sigma^n(\ulx)\}$ is $\delta$-dense in $\Sigma$. Moreover, as $(i_0,\ldots,i_{k-1})_{\mathcal{V}_\delta}\subset B_{2\delta}(\ulx)$ for all $(i_0,\ldots,i_{k-1})_{\mathcal{V}_\delta}\in \U_{\delta/2T}$, we have $\tau_\delta\le \tau_{\U_{\delta/2T}}$. Hence, $$\E_\mu(\tau_\delta)\le \E_\mu(\tau_{\U_{\delta/2T}}).$$ 
For convenience in the computations, we fix $\U_{\delta/2T}=\U_{\delta}$. Since $\U_{\delta}=\bigcup_{\mathbf{i}\in \mathrm{U}^*}[\mathbf{i}]$ is a subpartition for $\Sigma$ \st $|\textbf{i}|= \left\lceil\frac{\log(\delta(1-\theta))}{\log(\theta)}\right\rceil$, where $ \mathrm{U}^*=\left\{(x_0,\ldots,x_{k-1}): \ 0\le l\le k-1, x_l\in V_j, \ 1\le j\le K_\delta\right\}\subset \Sigma^*$ is a countable set, therefore, by Lemma \ref{Lemma:subpartition} there exists constant $C>1$ \st 

  $$ \E_\mu(\tau_\mathcal{U_\delta})\le C\left(\left\lceil\frac{\log(\delta(1-\theta))}{\log(\theta)}\right\rceil+\max_{1\le i_0,\ldots,i_{k-1}\le K_{\delta}}\E_\mu\left(\tau_{(i_0,\ldots,i_{k-1})_{\mathcal{V}_\delta}}\right)\right)\left( \sum_{k=1}^{\#(\U_{\delta})}\frac{1}{k}\right).$$

Now, since $\left( \sum_{k=1}^{\#(\U_{\delta})}\frac{1}{k}\right)\le C' \log \#(\U_{\delta}) $ for some $C'>1$, and in view of Theorem \ref{Lemma: upper bound for E(hitting time)}, where we showed that $\E_\mu\left(\tau_{(i_0,\ldots,i_{k-1})_{\mathcal{V}_\delta}}(\ulx)\right)\le \frac{C}{\mu\left((i_0,\ldots,i_{k-1})_{\mathcal{V}_\delta}\right)}$ for some $C>1$, we conclude 

  \begin{align}
  \notag \E_\mu(\tau_\mathcal{U_\delta})&\le C\left(\left\lceil\frac{\log(\delta(1-\theta))}{\log(\theta)}\right\rceil+\max_{1\le i_0,\ldots,i_{k-1}\le K_{\delta}} \frac{1}{\mu\left((i_0,\ldots,i_{k-1})_{\mathcal{V}_\delta}\right)}\right)\left( \log \#(\U_{\delta}) \right)\\& \label{Ineq: before part of up bound for E_delta}
   \le C\left(\left\lceil\frac{\log(\delta(1-\theta))}{\log(\theta)}\right\rceil+ \frac{1}{ \min_{\ulx\in \Sigma} \mu(B_{\delta/2T}(\ulx))}\right)\left( \log \#(\U_{\delta}) \right)\\&\label{Ineq: part of up bound for E_delta}
    = C\left(\left\lceil\frac{\log(\delta(1-\theta))}{\log(\theta)}\right\rceil+ \frac{1}{ M_\mu(\delta/2T)}\right)\left( \log \#(\U_{\delta}) \right).
\end{align}

Recall that $\#(\U_{\delta})=\left\lceil\left(2\sqrt{\frac{1}{\delta}}\left(1+o\left(\sqrt{\delta}\right)\right)\right) ^ { \frac{\log(\delta(1-\theta))}{\log(\theta)}}\right\rceil$. Now, we have 
$\log \#(\U_{\delta})=C \left(\log \frac{1}{\delta}\right)^2+ o(1).$ Hence, we can replace $\log\#(\U_{\delta})$ by $\left(\log \frac{1}{\delta}\right)^2$ in \eqref{Ineq: part of up bound for E_delta}. Now, since we have $\frac{\log(\delta(1-\theta))}{\log(\theta)}\left(\log \frac{1}{\delta}\right)^2=\mathcal{O}\left(\left(\log \frac{1}{\delta}\right)^3\right)$, we obtain that 
$$ \E_\mu(\tau_\mathcal{U_\delta})\le \left(\frac{C}{ M_\mu(\delta/2T)}\right)\left(\log \frac{1}{\delta}\right)^2.$$

\textit{ Proof of lower bound}. We now find a lower bound for $\E_\mu(\tau_\delta)$. If the set $\{\ulx,\ldots,\sigma^n(\ulx)\}$ is $\delta$-dense in $\Sigma$, then there exists an element of that set in each ball centred at a point in $\Sigma$ of radius $\delta$ or larger, which by property (2) for $\U_{2\delta}$, for all $\ulx\in \Sigma$, there exists $(i_0,\ldots,i_{k-1})_{\mathcal{V}_\delta}\in \U_{2\delta}$ \st $B_{2\delta}(\ulx)\subset (i_0,\ldots,i_{k-1})_{\mathcal{V}_\delta}$, therefore,  $\E_\mu(\tau_{\U_{2\delta}})\le \E_\mu(\tau_\delta)$. 

Now, by the definition, $\E_\mu(\tau_{(i_0,\ldots,i_{k-1})_{\mathcal{V}_\delta}})\le\E_\mu(\tau_{\U_{2\delta}})$, for all $(i_0,\ldots,i_{k-1})_{\mathcal{V}_\delta}\in\U_{2\delta}$. We use the lower bound of Theorem \ref{Lemma: upper bound for E(hitting time)}, where for some $0<c<1$ and for each  $(i_0,\ldots,i_{k-1})_{\mathcal{V}_\delta}\in\U_{2\delta}$ we have
$$ \frac{c}{\mu((i_0,\ldots,i_{k-1})_{\mathcal{V}_\delta})} \le\E_\mu(\tau_{(i_0,\ldots,i_{k-1})_{\mathcal{V}_\delta}}).$$
Since there is some $(i_0,\ldots,i_{k-1})_{\mathcal{V}_\delta}\in \U_{2\delta} $ \st $\mu((i_0,\ldots,i_{k-1})_{\mathcal{V}_\delta})\le \mu (\min_{\ulx\in \Sigma}(B_{2T\delta}(\ulx))=M_\mu({2T\delta})$, therefore, there exists $0<c<1$ \st $$\frac{c}{M_\mu({2T\delta})}\le \E_\mu(\tau_\delta).$$
Therefore, the proof of Theorem \ref{Theorem: main theorem} is completed.
\end{proof}

\section{Cover time under an exponential metric}\label{Section: exponential metric}

In this section, we examine the behaviour of the cover time in $(\Sigma, \sigma)$ equipped with the metric \begin{align*}
    d_2(\ulx,\underline{y})=\sum_{n\ge 0} \theta^n  \left| \frac{1}{e^{\alpha x_n}} - \frac{1}{e^{\alpha y_n}} \right|,
\end{align*}where $\alpha>0$, $\theta\in(0,1)$ and $\ulx,\uly$ are any elements that belong to the full shift. We first derive similar results with respect to metric $d_2$ in the same order as in Section \ref{Section: Propositions}, and then prove Theorem \ref{Theorem: main theorem 2}.
\subsection{Preliminary Propositions for $d_2$} We first find the diameter of the cylinders associated with the metric $d_2(\ulx,\underline{y})=\sum_{n\ge 0} \theta^n  \left| \frac{1}{e^{\alpha x}} - \frac{1}{e^{\alpha y}} \right|,$ where $\alpha>0$ and $\theta\in(0,1)$. 

Considering the cylinder $Z_k$, as described at the beginning of Subsection \ref{subsection: PP for d_1}, we obtain the diameter of the cylinder where $\ulx$ and $\uly$ have the form  
$\ulx=(u_0,\ldots,u_{k-1},M,\ldots)$ and $\uly=(u_0,\ldots,u_{k-1},1,1,\ldots),$ where $M$ is a large number. Then we obtain
$$ d_2(\ulx,\underline{y})=\theta^k\left(\sum_{n\ge 0} \theta^n \left| \frac{1}{e^{\alpha M}} - \frac{1}{e^{\alpha }} \right|\right),$$
and as $M\to \infty$, $d_2(\ulx,\underline{y})=\frac{1}{e^\alpha} \cdot \frac{\theta^k}{1-\theta}$. Hence, the diameter of the cylinder with respect to metric $d_2$ is
\begin{equation}\label{Diam_Cylinder_Full_d_2}
    \text{diam}(Z_k)=\frac{1}{e^\alpha} \cdot \frac{\theta^k}{1-\theta}. 
\end{equation}

\begin{remark}\label{Remark: golden ration}
   We note the behaviour of the two maps that will be used in the proof of the following proposition. The map $h(x)=\frac{1}{x}\log\left(e^{-x}+1\right)$ for $x\in(0,\infty)$ is decreasing, and supposing $ \frac{1}{x}\log\left(e^{-x}+1\right)=1$, we obtain $e^x-e^{-x}-1=0$, which has root $x=\log\varphi$, where $\varphi:=\frac{1+\sqrt{5}}{2}$, the golden ratio. Hence, we observe that $h(x)$ is always less than 1 if $x\ge \log \varphi\approx 0.48$. To simplify, we use the notation $\gamma:=\log \varphi$.
    In addition, the mapping $h'(x)=\frac{1}{x}\log(1-e^{-x})$ is strictly increasing, and for $x\in(0,\gamma]$, $h'(x)<-2$.
\end{remark}

Now, analogously to Proposition \ref{Lemma:Totally bounded in Full shift}, we can construct a finite open cover for $\Sigma$ with respect to $d_2$, which satisfies condition \hyperref[Condition (U)]{\textbf{(U)}}.

\begin{proposition}\label{Lemma:Totally bounded in Full shift d_2}
   
     Let $\delta>0$ and assume that $(\Sigma,\sigma)$ is a countable full shift with the metric $d_2$ as in \eqref{Def:Metric d_2}. Then, for $\alpha>0$, there exists a finite open cover $\U_{\delta}^{\alpha}$ for $(\Sigma,\sigma)$ with $$\#(\U_{\delta}^{\alpha})=\left\lfloor\left( \frac{1}{\alpha} \log \left(\frac{1}{\delta}\right)+\mathcal{O}_\alpha(1)\right) ^ { \log_\theta{\delta e^\alpha(1-\theta)}}\right\rfloor.$$
    Moreover, $\U_{\delta}^{\alpha}$ satisfies the following two conditions
\begin{enumerate}
    \item for all $U\in \U_{\delta}^{\alpha}$, we have $U=\bigcup_{\mathbf{i}\in \mathrm{U^*}}[\mathbf{i}]$ \st $|\mathbf{i}|= \left\lceil\frac{\log(e^{\alpha}\delta(1-\theta))}{\log(\theta)}\right\rceil$, where $\mathrm{U^*}\subset\Sigma^*$ is a set of finite words of length $|\mathbf{i}|$ with respect to $U$,
    \item there exist $1<T<\infty$ \st for all $\ulx\in \Sigma$, there exist  $U\in \U_{\delta}^{\alpha}$ \st $B_{\delta}(\ulx)\subset U \subset B_{T\delta}(\ulx)$.
\end{enumerate}
    
\end{proposition}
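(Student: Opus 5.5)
I will mirror the two-step proof of Proposition \ref{Lemma:Totally bounded in Full shift}. First I build a finite cover $\mathcal{V}^\alpha_\delta$ of $\N$ for the metric $\rho_2(x,y)=|e^{-\alpha x}-e^{-\alpha y}|$. Then I take products of its elements over the first $k$ coordinates to get $\U^\alpha_\delta$.

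\emph{Step 1: the cover of $\N$.} Let $N$ be the least natural number with $e^{-\alpha N}\le\delta$, so $N=\frac{1}{\alpha}\log\frac1\delta+\mathcal{O}(1)$. The tail $\mathcal{N}_{\rho_2}(N)=\{N,N+1,\ldots\}$ has $\rho_2$-diameter $e^{-\alpha N}\le\delta$, and it is put in $\mathcal{V}^\alpha_\delta$. For $n<N$ I ask whether $n$ and $n+1$ can share a neighbourhood, that is, whether $e^{-\alpha n}(1-e^{-\alpha})\le\delta$. Remark \ref{Remark: golden ration} enters here through the maps $h$ and $h'$. I will write the threshold as $n\ge \frac{1}{\alpha}\log\frac1\delta+\frac1\alpha\log(1-e^{-\alpha})$. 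Since $h'(\alpha)=\frac1\alpha\log(1-e^{-\alpha})$ is a negative constant, bounded below by $-2$ when $\alpha>\gamma$, this threshold is $N-\mathcal{O}_\alpha(1)$. I will also show that a block of consecutive integers $\{n,\ldots,n+i\}$ with $n$ near $N$ has diameter $e^{-\alpha n}(1-e^{-\alpha i})$. This forces each non-singleton block to have bounded length, so near $N$ only $\mathcal{O}_\alpha(1)$ non-trivial blocks occur.

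Below the threshold every integer is a singleton neighbourhood, contributing at most $N$ sets. Hence
\[
K^\alpha_\delta:=\#\mathcal{V}^\alpha_\delta=\frac1\alpha\log\frac1\delta+\mathcal{O}_\alpha(1).
\]
This is the step I expect to be the main obstacle. It requires bookkeeping of the block lengths near $N$, with the cases $\alpha\ge\gamma$ and $\alpha<\gamma$ separated using $h$ and $h'$. The goal is to get a constant depending only on $\alpha$ and to confirm that no multi-element block stretches far below $N$.

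\emph{Step 2: the cover of $\Sigma$.} Set $k=\lceil\log_\theta(\delta e^\alpha(1-\theta))\rceil$ and define $\U^\alpha_\delta$ as the collection of all sets $(i_0,\ldots,i_{k-1})_{\mathcal{V}^\alpha_\delta}$, exactly as in \eqref{Def: Z_i_0}. These sets are pairwise disjoint, cover $\Sigma$, and are unions of $k$-cylinders, which gives property (1). Their number is $(K^\alpha_\delta)^k$, which is the stated cardinality.

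For property (2) I repeat the diameter computation leading to \eqref{Diam_Union_Full}, now using \eqref{Diam_Cylinder_Full_d_2} for the tail. The first $k$ coordinates contribute at most $\delta\frac{1-\theta^k}{1-\theta}$. The tail contributes $e^{-\alpha}\frac{\theta^k}{1-\theta}\le\delta$ by the choice of $k$. So each element has diameter at most $\delta\bigl(\frac1{1-\theta}+1\bigr)$, and with $T=\frac1{1-\theta}+1$ this gives $U\subset B_{T\delta}(\ulx)$ for $\ulx\in U$. The inclusion $B_\delta(\ulx)\subset U$ is argued as in Proposition \ref{Lemma:Totally bounded in Full shift}, and it is routine once the diameters are in hand.
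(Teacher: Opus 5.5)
Your construction is the paper's own. It uses the same $N$ and tail block $\mathcal{N}_{\rho_2}(N)$, essentially the same threshold $\frac1\alpha\log\frac1\delta+\frac1\alpha\log(1-e^{-\alpha})$ with the split at $\gamma$, the same $k=\lceil\log_\theta(\delta e^\alpha(1-\theta))\rceil$, the same product partition, and the same $T=\frac1{1-\theta}+1$. Step 1, property (1), the count $(K^\alpha_\delta)^k$, and the diameter bound giving $U\subset B_{T\delta}(\ulx)$ for $\ulx\in U$ are all fine.

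The gap is your final sentence: $B_\delta(\ulx)\subset U$ does not follow from any diameter estimate. A bound on $\mathrm{diam}\,U$, or a comparison of $\mathrm{diam}\,B_\delta(\ulx)$ with $\mathrm{diam}\,U$, says nothing about whether points $\delta$-close to $\ulx$ stay in $U$. You would need $d_2(\ulx,\Sigma\setminus U)\ge\delta$, and for this partition that is false.

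Take $\delta$ small and any $\ulx$ with $x_{k-1}=N-1$, and let $\uly$ agree with $\ulx$ except $y_{k-1}=N$. Since $N-1\notin\mathcal{N}_{\rho_2}(N)$, these points lie in different elements of $\U^\alpha_\delta$. Yet, using $e^{-\alpha N}\le\delta$ and $\theta^k\le\delta e^\alpha(1-\theta)$,
\[
d_2(\ulx,\uly)=\theta^{k-1}e^{-\alpha N}(e^{\alpha}-1)\le\frac{e^{\alpha}(e^{\alpha}-1)(1-\theta)}{\theta}\,\delta^{2}<\delta
\]
for all sufficiently small $\delta$. The elements are pairwise disjoint, so the only possible $U\supset B_\delta(\ulx)$ is the element containing $\ulx$, and it misses $\uly$. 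The first inclusion in (2) therefore fails for this cover; it is not merely unproved.

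Appealing to Proposition \ref{Lemma:Totally bounded in Full shift} does not help. There the inclusion is deduced from $\mathrm{diam}\,B_\delta(\ulx)<\mathrm{diam}\,U$, which is the same non sequitur. The same example breaks it there too, with $\rho_1(N-1,N)=\frac{1}{N(N-1)}<\delta$ in the first coordinate. The paper's proof of the present proposition simply carries that step over, so your proposal inherits the flaw rather than fixing it.

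What your diameter computation does prove is the one-sided statement: every $\ulx$ lies in some $U\in\U^\alpha_\delta$ with $U\subset B_{T\delta}(\ulx)$. This is what the cover-time upper bound uses. Any claim that a $\delta$-ball sits inside a partition element has to be dropped or reformulated, not derived from diameters.
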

\begin{proof}
 The idea of the proof is analogous to that of Proposition \ref{Lemma:Totally bounded in Full shift}, i.e., dividing the proof into two steps. In the first step, we find a finite collection of cylinders that efficiently cover $\Sigma$, and in the second step, we then verify conditions (1) and (2) for this collection. 
 
\textit{Step 1}. Let $\alpha>0$ and assume that we have the metric $d_2(\ulx,\underline{y})=\sum_{n\ge 0} \theta^n  \rho_2(x_n, y_n)$ where $\theta\in(0,1)$ and $\rho_2(x_n, y_n) = \left| \frac{1}{e^{\alpha x_n}} - \frac{1}{e^{\alpha y_n}} \right|$. Here we use the total boundedness of the $\rho_2$, since this allows us to find a finite cover of $\N$ with respect to $\rho_2$. Let $\delta> 0$, and start by letting $\mathcal{V}_\delta^\alpha$ be the empty set, to which elements are then added step by step. Consider the minimal natural number $N$ \st $\frac{1}{e^{\alpha N}}\le \delta$, and as we have $\delta\le\frac{1}{e^{\alpha (N-1)}}$, this implies $\left\lceil\frac{1}{\alpha}\log\frac{1}{\delta}\right\rceil= N$. So, $\frac{1}{e^{\alpha N}}$ is an approximation of the diameter of the $\delta$-neighbourhoods. Moreover,
\begin{align}\label{Countable set of N d_2+}
    \mathcal{N}_{\rho_2}(N):=\{N,N+1,\ldots\},
\end{align}
is the set of all $n\ge N$ \st we have ${\rho_2}(N,n)<\delta$. We add $\mathcal{N}_{\rho_2}(N)$ to $\mathcal{V}_\delta^{\alpha}$.

We now estimate the number of singleton $\delta$-neighbourhoods in $\N$ with respect to the metric $\rho_2$. We use the following formula, where $n-1$ is the smallest element among the non-singleton neighbourhoods, and the preceding number is the largest natural number with the singleton $\delta$-neighbourhood

$$\rho_2(n-1,n)<\frac{1}{e^{\alpha N}}.$$
Now, since 
$$ \frac{1}{e^{\alpha(n-1)}}-\frac{1}{e^{\alpha n}}=\frac{e^{\alpha n}(1-e^{-\alpha})}{e^{\alpha (n-1)}e^{\alpha n}}=\frac{(1-e^{-\alpha})}{e^{\alpha (n-1)}}<\frac{1}{e^{\alpha N}},$$
we get $e^{\alpha N}(1-e^{-\alpha})<e^{\alpha (n-1)}$. Hence, we have $e^{\alpha N}<\frac{e^{\alpha (n-1)}}{(1-e^{-\alpha})}$. By taking the logarithm, we obtain
$$n>\left\lfloor N+\frac{1}{\alpha}\log(1-e^{-\alpha})+1  \right\rfloor.$$

 Therefore, depending on $\alpha$, the number of singleton neighbourhoods is determined. In fact, if $\frac{1}{\alpha}\log(1-e^{-\alpha})\ge-2$, by Remark \ref{Remark: golden ration} for $\alpha> \gamma$, all neighbourhoods preceding $\mathcal{N}_{\rho_2}(N)$ are singleton. Otherwise, the number of singleton neighbourhoods is smaller and there are some finite neighbourhoods with at least two elements. Consequently, we consider two cases; the first case, $\alpha>\gamma$, and the second case, $\alpha\le \gamma$, and find the number of neighbourhoods before $\mathcal{N}_{\rho_2}(N)$ in each case. 
 
 First, consider $\alpha>\gamma$. In this case, $N-1$ is the largest number whose corresponding neighbourhood is a singleton, i.e. $\mathcal{N}_{\rho_2}(n)=\left\{n\right\},$ for all $n\le N-1$. We add all these neighbourhoods to $\mathcal{V}_\delta^{\alpha}$. Hence, $\mathcal{V}_\delta^{\alpha}$ is the optimal finite open cover for $\N$ with respect to ${\rho_2}$ for ${\alpha>\gamma}$ that includes $N-1$ singleton neighbourhoods and one infinite neighbourhood $\mathcal{N}_{\rho_2}(N)$ of $\N$. Therefore, in this case, the total number of open neighbourhoods in $\mathcal{V}_\delta^{\alpha}$ is equal to $K_\delta^{\alpha}:=N-1+1=N=\left\lceil \frac{1}{\alpha}\log\frac{1}{\delta}\right\rceil$.

 Now, for case $\alpha\le \gamma$, since $\left\lfloor N+\frac{1}{\alpha}\log(1-e^{-\alpha}) +1 \right\rfloor$ is the greatest natural number admitting a singleton neighbourhood, we denote this number by $N_s^{\alpha}$, i.e. $$N_s^{\alpha}:=\left\lfloor N+\frac{1}{\alpha}\log(1-e^{-\alpha}) +1 \right\rfloor= \left\lfloor \frac{1}{\alpha}\log\frac{1}{\delta}+\frac{1}{\alpha}\log(1-e^{-\alpha})+1  \right\rfloor.$$

We add all $\mathcal{N}_{\rho_2}(n)$, for natural numbers $n<N_s^{\alpha}$ to $\mathcal{V}_\delta^{\alpha}$. We are now looking for non-singleton $\delta$-neighbourhoods in this case. To do this, we first compute the neighbourhood associated with $N-1$, i.e. $$\mathcal{N}_{\rho_2}(N-1),$$ the last neighbourhood before $\mathcal{N}_{\rho_2}(N)$, whose elements $i$ all satisfy: 
$${\rho_2}(N-1-i,N-1)=\frac{1}{e^{\alpha(N-1-i)}}-\frac{1}{e^{\alpha(N-1)}}\le \frac{1}{e^{\alpha N}}.$$
We simplify it as follows;

\begin{align*}
\frac{1}{e^{\alpha(N-1-i)}}-\frac{1}{e^{\alpha(N-1)}}&= \frac{e^{\alpha(N-1)}-e^{\alpha(N-1-i)}}{e^{\alpha(N-1-i)}e^{\alpha(N-1)}}=\frac{e^{\alpha(N-1)}(1-e^{-\alpha i})}{e^{\alpha(N-1)}e^{-\alpha i}e^{\alpha N}e^{-\alpha }}\\&= \left(\frac{1-e^{-\alpha i}}{e^{-\alpha i}e^{-\alpha }}\right)\frac{1}{e^{\alpha N}} \le \frac{1}{e^{\alpha N}}.\end{align*}
So, to find appropriate $i$, we have the following condition
$$ 1-e^{-\alpha i}\le e^{-\alpha i}e^{-\alpha },$$
which implies that $i\le\frac{1}{\alpha}\log\left(e^{-\alpha}+1\right)$. So, we set $i=\left\lfloor\frac{1}{\alpha}\log\left(e^{-\alpha}+1\right)\right\rfloor$. Now, in view of Remark \ref{Remark: golden ration}, in this case, as $\alpha\le\gamma$, our $i$ can take large values, which implies that there are several elements in $\mathcal{N}_{\rho_2}(N-1)$. Now, we compute the first element of $\mathcal{N}_{\rho_2}(N-1)$ using $i$,

$$N-1-i= N-1-\left\lfloor\frac{1}{\alpha}\log\left(e^{-\alpha}+1\right)\right\rfloor.$$
Therefore, we obtain

$$\mathcal{N}_{\rho_2}(N-1)=\left\{N-C_{\alpha,1},\ldots,N-1\right\},$$

where $C_{\alpha,1}:=1+\left\lfloor\frac{1}{\alpha}\log\left(e^{-\alpha}+1\right)\right\rfloor$. We add this to $\mathcal{V}_\delta^{\alpha}$ and assume that there exist other neighbourhoods similar to $\mathcal{N}_{\rho_2}(N-1)$, that contain more than one element, then estimate the number of singleton neighbourhoods and finally determine the total number of neighbourhoods. 

We now look at the neighbourhood before $\mathcal{N}_{\rho_2}(N-1)$, that is $\mathcal{N}_{\rho_2}\left(N-C_{\alpha,1}-1\right)$ in which the elements satisfy
$${\rho_2}\left( N-2-\frac{1}{\alpha}\log\left(e^{-\alpha}+1\right)-i, N-2-\frac{1}{\alpha}\log\left(e^{-\alpha}+1\right)\right)\le\frac{1}{e^{\alpha N}},$$
which simplifies to
\begin{align}\label{Inequality alpha<gamma (1)}
    \frac{e^{\alpha\left(N-2-\frac{1}{\alpha}\log\left(e^{-\alpha}+1\right)\right)}-e^{\alpha\left(N-2-\frac{1}{\alpha}\log\left(e^{-\alpha}+1\right)-i\right)}}{e^{\alpha\left(N-2-\frac{1}{\alpha}\log\left(e^{-\alpha}+1\right)-i\right)}e^{\alpha\left(N-2-\frac{1}{\alpha}\log\left(e^{-\alpha}+1\right)\right)}}=\frac{1-e^{-\alpha i}}{e^{\alpha\left(N-2-\frac{1}{\alpha}\log\left(e^{-\alpha}+1\right)-i\right)}}\le \frac{1}{e^{\alpha N}}.
\end{align}
In order to find $i$, that gives the cardinality of $\mathcal{N}_{\rho_2}\left(N-C_{\alpha,1}-1\right)$, we have the following condition by \eqref{Inequality alpha<gamma (1)},
\begin{align*}
    \frac{1-e^{-\alpha i}}{e^{\alpha(-2-\frac{1}{\alpha}\log\left(e^{-\alpha}+1\right)-i)}}\le 1,
\end{align*}
which implies $$1-e^{-\alpha i}\le e^{-2\alpha-\log\left(e^{-\alpha}+1\right)}e^{-i\alpha},$$ from which it follows that $e^{\alpha i}\le e^{-2\alpha-\log\left(e^{-\alpha}+1\right)}+1 $. Hence,
$i\le \frac{1}{\alpha}\log \left(e^{-2\alpha-\log\left(e^{-\alpha}+1\right)}\right)$, and we set $i=\left\lfloor \frac{1}{\alpha}\log \left(e^{-2\alpha-\log\left(e^{-\alpha}+1\right)}\right)\right\rfloor$. So, the smallest number in $\mathcal{N}_{\rho_2}\left(N-C_{\alpha,1}-1\right)$ is 
\begin{align*}
 N-2-\left\lfloor\frac{1}{\alpha}\log\left(e^{-\alpha}+1\right)\right\rfloor-i&=  N-2-\left\lfloor\frac{1}{\alpha}\log\left(e^{-\alpha}+1\right)\right\rfloor- \left\lfloor\frac{1}{\alpha}\log \left(e^{-2\alpha-\log\left(e^{-\alpha}+1\right)}\right)\right\rfloor
\end{align*}
Hence,
\begin{align*}
&\mathcal{N}_{\rho_2}\left(N-C_{\alpha,1}-1\right) =\left\{  N-C_{\alpha,2},\ldots,N-C_{\alpha,1}-1 \right\},\end{align*}
where $C_{\alpha,2}:= 2+\left\lfloor\frac{1}{\alpha}\log\left(e^{-\alpha}+1\right)\right\rfloor+ \left\lfloor\frac{1}{\alpha}\log \left(e^{-2\alpha-\log\left(e^{-\alpha}+1\right)}\right)\right\rfloor$. Similarly, by continuing this process, the first elements of these neighbourhoods can be represented in the form of $N-C_{\alpha,j}$, where $C_{\alpha,j}$ is a constant in terms of $\alpha$ in the $j$-th such neighbourhood. So, to compute the number of these neighbourhoods, by comparing the largest number that admits a singleton neighbourhood, namely $N_s^{\alpha}$, with the smallest number in the first finite neighbourhood after singleton neighbourhoods, we can determine the number of non-singleton finite neighbourhoods. So, if we denote $j'$ the number of finite neighbourhoods that are not singleton, then we have
$$N-C_{\alpha,j'}-1= \left\lfloor N+\frac{1}{\alpha}\log(1-e^{-\alpha}) +1\right\rfloor,$$
which implies that $C_{\alpha,j'}=\left\lfloor\frac{-1}{\alpha}\log(1-e^{-\alpha})-2\right\rfloor$. Hence, $j'$, the number of these neighbourhoods, depends on $\alpha$ and is a constant, which we denote by $C_{\alpha}$. Now we also add all such neighbourhoods to $\mathcal{V}_\delta^{\alpha}$. Hence, $\mathcal{V}_\delta^{\alpha}$ is the optimal finite open cover for $\N$ with respect to ${\rho_2}$.

To find the number of open neighbourhoods in $\mathcal{V}_\delta^{\alpha}$, we have
\begin{align*}
K_\delta^{\alpha}=1+j'+N_s^{\alpha}=1+C_\alpha+ \left\lfloor N+\frac{1}{\alpha}\log(1-e^{-\alpha}) +1\right\rfloor= \left\lfloor \frac{1}{\alpha} \log \left(\frac{1}{\delta}\right)\right\rfloor+\mathcal{O}_\alpha(1).
\end{align*}
Hence, in both cases of $\alpha$, we obtain an optimal finite open cover $\mathcal{V}_\delta^{\alpha}$ with $K_\delta^{\alpha}= \left\lfloor \frac{1}{\alpha} \log \left(\frac{1}{\delta}\right)\right\rfloor+\mathcal{O}_\alpha(1)$ elements.

So, for $\delta>0$, there exists the finite set $$W_\delta^{\alpha}:=\left\{1,\ldots,N-1,N\right\},$$ with $K_\delta^{\alpha}$ elements, such that $\N\subset\bigcup_{i=1}^{K_\delta^{\alpha}}\mathcal{N}_{\rho_2}(v_i)$, where $v_i\in W_\delta^{\alpha}$. We set $V_i^{\alpha}:=\mathcal{N}_{\rho_2}(v_i)$ for $1\le i\le K_\delta^{\alpha} $, then \begin{equation}\label{Def:V_delta d_2+}
    \mathcal{V}_\delta^{\alpha}=\{V_i^{\alpha}\}_{i=1}^{K_\delta^{\alpha}}
\end{equation} is the open cover of $\N$, with respect to which ${\rho_2}$ is totally bounded.

We can now construct, using the collection $\mathcal{V}_\delta^{\alpha}$, a finite collection of cylinders for $\Sigma$. Let $k\ge 1$ and suppose $1\le i_0,\ldots,i_{k-1}\le K_\delta^{\alpha}$, then we consider the following sets.
\begin{equation}\label{Def: Z_i_0 d_2+}
(i_0,\ldots,i_{k-1})_{\mathcal{V}_\delta^{\alpha}}:=\left\{ (x_0,\ldots,x_{k-1},\ldots): x_0\in V_{i_0}^{\alpha},\ldots,x_{k-1}\in V_{i_{k-1}}^{\alpha}  \right\}.\end{equation}
Hence, these sets are a union of cylinders, i.e., $(i_0,\ldots,i_{k-1})_{\mathcal{V}_\delta^{\alpha}}=\bigcup_{\i\in\mathrm{U^*}}[\i]$, where $$\mathrm{U^*}=\left\{ (x_0,\ldots,x_{k-1}):  x_0\in V_{i_0}^{\alpha},\ldots,x_{k-1}\in V_{i_{k-1}}^{\alpha}    \right\}.$$ 

To calculate $\text{diam}\left((i_0,\ldots,i_{k-1})_{\mathcal{V}_\delta^{\alpha}}\right)$, with the analogous arguments for the diameter of $(i_0,\ldots,i_{k-1})_{\mathcal{V}_\delta}$ in Proposition \ref{Lemma:Totally bounded in Full shift}, the diameter of $(i_0,\ldots,i_{k-1})_{\mathcal{V}_\delta^{\alpha}}$ is determined by elements of different cylinders, i.e. $\ulx=(l_{i_0},\ldots,l_{i_{k-1}},1,\ldots)$ and $\uly=(s_{i_0},\ldots,s_{i_{k-1}},M,\ldots),$
where $M$ is a large number and for all $i_0\le i\le i_{k-1}$ if $V_i^\alpha\neq\mathcal{N}_{\rho_2}(N)$, $l_i,s_i\in V_i^\alpha$ are \st $\text{diam}(V_i^\alpha)={\rho_2}(l_i,s_i)$, and if $V_i^\alpha=\mathcal{N}_{\rho_2}(N)$, $s_i=N$ and $l_i=l$ \st $l\to\infty$. So, $$ d_2(\ulx,\underline{y})=\sum_{n=0}^{k-1} \theta^n \text{diam}(V_i^{\alpha})+\sum_{n\ge k} \theta^n \left| \frac{1}{e^\alpha} - \frac{1}{e^{M\alpha}} \right|=  \delta\left(\frac{1-\theta^k}{1-\theta}\right)+\frac{\theta^k}{e^\alpha(1-\theta)},$$
where $M\to\infty$. 
Hence, 
\begin{equation} \label{Diam_Union_Full d_2+}
\text{diam}\left((i_0,\ldots,i_{k-1})_{\mathcal{V}_\delta^{\alpha}}\right)=\delta\left(\frac{1-\theta^k}{1-\theta}\right)+\frac{\theta^k}{e^\alpha(1-\theta)}.
\end{equation} 

Now, we define $\U_{\delta}^{\alpha}:=\left\{(i_0,\ldots,i_{k-1})_{\mathcal{V}_\delta^{\alpha}}: 1\le i_l\le K_{\delta}^{\alpha}, \ \forall \ 0\le l\le k-1 \right\}$, as a finite collection of open neighbourhoods with $\#(\U_{\delta}^{\alpha})=(K_\delta^{\alpha})^k$ that covers $\Sigma$.

\textit{Step 2.} Here we show that $\U_{\delta}^{\alpha}$ satisfies conditions (1) and (2). By the first step, we are able to derive a finite collection $$\U_{\delta}^{\alpha}=\left\{(i_0,\ldots,i_{k-1})_{\mathcal{V}_\delta^{\alpha}}: 1\le i_l\le K_{\delta}^{\alpha}, \ \forall \ 0\le l\le k-1 \right\},$$ for every $k\ge 1$. To ensure that this open cover satisfies properties (1) and (2), we assume that $k=\left\lceil \frac{\log(\delta e^{\alpha}(1-\theta))}{\log(\theta)}\right\rceil$.  Note that $\U_{\delta}^{\alpha}$ is a pairwise disjoint cover for $\Sigma$.

We can rewrite $(i_0,\ldots,i_{k-1})_{\mathcal{V}_\delta^{\alpha}}$ as an element of $\U_{\delta}^{\alpha}$ in form $(i_0,\ldots,i_{k-1})_{\mathcal{V}_\delta^{\alpha}}=\bigcup_{\i\in\mathrm{U^*}}[\textbf{i}]$ where $\mathrm{U^*}=\left\{ (x_0,\ldots,x_{k-1}):  x_0\in V_{i_0}^{\alpha},\ldots,x_{k-1}\in V_{i_{k-1}}^{\alpha}    \right\}\subset \Sigma^*$ is a countable set such that $|\textbf{i}|= \left\lceil\frac{\log(\delta e^\alpha(1-\theta))}{\log(\theta)}\right\rceil$, which implies condition (1).

To obtain condition (2), we note that the same number $ 1<T=\left( \frac{1}{1-\theta} \right)+1<\infty,$ that established for condition (2) of $\U_\delta$ in the Proposition \ref{Lemma:Totally bounded in Full shift} remain valid for the collection $\U_{\delta}^{\alpha}$ as well.
Hence, the finite collection $\U_{\delta}^{\alpha}$ satisfies condition (2).

Observe that, since $k=\left\lceil \frac{\log(\delta e^{\alpha}(1-\theta))}{\log(\theta)}\right\rceil$ and $K_\delta^{\alpha}= \left\lfloor \frac{1}{\alpha} \log \left(\frac{1}{\delta}\right)\right\rfloor+\mathcal{O}_\alpha(1)$, the total number of elements in $\U_{\delta}^{\alpha}$ is equal to 
\begin{align*}
    \#(\U_{\delta}^{\alpha})=(K_\delta^{\alpha})^k= \left\lfloor\left( \frac{1}{\alpha} \log \left(\frac{1}{\delta}\right)+\mathcal{O}_\alpha(1)\right) ^ { \log_\theta{\delta e^\alpha(1-\theta)}}\right\rfloor.
\end{align*}

We note that $\U_{\delta}^{\alpha}$ is minimal. In fact, for any collection of cylinders of length less than $k$ in $\Sigma$, this collection cannot have the property (2). So, the desired conclusion of the proposition is obtained.
\end{proof}

We now have a statement analogous to Theorem \ref{Lemma: upper bound for E(hitting time)} for $\mathcal{U}_\delta^{\alpha}$. In fact, this theorem shows that this result is uniform with respect to the different metrics.
\begin{theorem}\label{Lemma: upper bound for E(hitting time) d_2}
Assume that $(\Sigma,\sigma)$ is the countable full shift equipped with the metric $d_2$ as in \eqref{Def:Metric d_2}. Let $\phi:\Sigma\to\R$ be a potential \st $\phi\big|_{[n]}\le -\kappa\log (n+1)$ for some $\kappa>1$. Suppose that $ U=(i_0,\ldots,i_{k-1})_{\mathcal{V}_\delta}^{\alpha}$ is an element of finite open cover $\mathcal{U}_\delta^{\alpha}$ as defined in Proposition \ref{Lemma:Totally bounded in Full shift d_2}. Then there exists $0<c<1<C<\infty$ such that $$ \frac{c}{\mu( U)} \le\E_\mu\left(\tau_{U}(\ulx)\right)\le \frac{C}{\mu( U)}.$$
\end{theorem}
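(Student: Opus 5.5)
The plan is to repeat the proof of Theorem \ref{Lemma: upper bound for E(hitting time)} almost verbatim, replacing $\U_\delta$ by $\U_\delta^\alpha$ and $k$ by $k=\left\lceil \frac{\log(\delta e^{\alpha}(1-\theta))}{\log\theta}\right\rceil$. That argument used only four ingredients: (i) $U$ is a disjoint union of cylinders of a common length $k$ (property (1) of Proposition \ref{Lemma:Totally bounded in Full shift d_2}); (ii) the $\psi$-mixing inheritance of Remark \ref{Remark: thida}; (iii) Kac's lemma; and (iv) an exponential-in-$k$ decay of $\mu(U)$. The first three are metric-independent, so they carry over directly.

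For the upper bound, I fix $m$ and take $n\ge k+m$. I then write $\mu_U(\{\tau_U\ge n\})=\mu_U(\F_0^{n-2}(U:U))$ as the difference of $\mu_U(\F_{k+m}^{n-2}(U:U))$ and the term $\mu_U({\F_0^{k+m-1}(U:U)}^c\cap \F_{k+m}^{n-2}(U:U))$. The first term is bounded below via \eqref{Inequality: psi-mixing of U}. The second is bounded above by $\tilde\theta(1+\psi(m))^2\mu(\{\tau_U\ge n-m-k\})$, using the uniform short-return estimate $\tilde\theta<1$ from \cite{MikeNatali} applied cylinder by cylinder and averaged over $\mathrm{U}^*$. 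This estimate is independent of which neighbourhoods $V_i^\alpha$ make up $U$. Choosing $m$ large gives $\mu_U(\{\tau_U\ge n\})\ge(1-\tilde\theta)\mu(\{\tau_U\ge n\})$, and summing over $n$ with Kac's lemma yields $\E_\mu(\tau_U)\le C/\mu(U)$ with $C=(1-\tilde\theta)^{-1}$.

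For the lower bound, I bound $\mu_U(\{\tau_U\ge n\})\le(1+\psi(m))\mu(\{\tau_U\ge n-m-k\})$. I then use \cite[Theorem 3.7]{Saeed} to replace $n-m-k$ by $n$, at the cost of a factor $e^{\mu(U)(m+k)}(1+o(1))$, and sum. It then remains to show that $\mu(U)(m+k)$ stays bounded uniformly in $U\in\U_\delta^\alpha$; this is the only place where the metric enters, and I expect it to be the main obstacle.

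To control $\mu(U)(m+k)$, I use the Gibbs bound $\mu([u_0,\dots,u_{k-1}])\le K\prod_j(u_j+1)^{-\kappa}$, which gives $\mu(U)\le K\prod_{j}\sum_{u\in V^\alpha_{i_j}}(u+1)^{-\kappa}$. In the $d_2$ setting the factors are simpler than for $d_1$. For $\alpha>\gamma$, every finite $V_i^\alpha$ is a singleton, contributing at most $2^{-\kappa}<1$. For $\alpha\le\gamma$, there are only $C_\alpha$ finite non-singleton neighbourhoods, each of bounded cardinality $\mathcal{O}_\alpha(1)$ and located near $N$. Their contribution is therefore $\mathcal{O}_\alpha(N^{-\kappa})<1$ for small $\delta$. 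The infinite neighbourhood $\mathcal{N}_{\rho_2}(N)$ contributes at most $N^{1-\kappa}/(\kappa-1)<1$ for large $N$.

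There is one delicate point: the singleton $\{1\}$ gives the factor $2^{-\kappa}$, which is a constant less than $1$ and not tending to zero. This is still enough, because every factor is bounded by $\max\{2^{-\kappa},\,o(1)\}=:\eta<1$ uniformly in $\delta$. Hence $\mu(U)\le K\eta^k$, and so $e^{\mu(U)(m+k)}\le e^{K\eta^k(m+k)}$ is uniformly bounded. This gives $\frac{c}{\mu(U)}\le \E_\mu(\tau_U)$ with $c<1$ independent of $\delta$ and $\alpha$-dependent only through $K_\delta^\alpha$, which completes the proof.
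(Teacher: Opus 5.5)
Your outline follows the paper's argument: the proof of Theorem~\ref{Lemma: upper bound for E(hitting time)}, with the decay estimate redone for $\U_\delta^\alpha$. But it inherits from that proof steps which do not hold as written, and the metric-dependent input is needed in the opposite direction from where you place it.

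\textbf{Upper bound.} Applying $\tilde\theta$ to each $[\mathbf i]$, $\mathbf i\in\mathrm U^*$, controls returns of $[\mathbf i]$ to \emph{itself}. On $[\mathbf i]$, however, the event ${\F_0^{k+m-1}(U:U)}^c$ means re-entering $U$, which can happen through any other $[\mathbf j]$, $\mathbf j\in\mathrm U^*$. For example, if every coordinate set of $U$ is the tail $\mathcal N_{\rho_2}(N)$, a point of $[N,\ldots,N]$ re-enters $U$ at time $1$ iff $x_k\ge N$, but re-enters $[N,\ldots,N]$ iff $x_k=N$. So averaging over $\mathrm U^*$ does not bound $\mu_U({\F_0^{k+m-1}(U:U)}^c)$. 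The short-return bound must be proved for $U$ itself, e.g.\ by viewing $U$ as a $k$-cylinder over the coarse alphabet $\mathcal V_\delta^\alpha$ (for which $\mu$ is still quasi-Bernoulli with the same constant) and redoing the estimate there.

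Moreover, the factorisation producing $\tilde\theta(1+\psi(m))^2$ is not a $\psi$-mixing estimate. The short-return event depends on $x_0,\ldots,x_{2k+m-1}$, while $\F_{k+m}^{n-2}(U:U)$ constrains $x_{k+m+1},x_{k+m+2},\ldots$, so there is no gap between them. You can restore a gap by dropping the no-visit condition on a further window of length $k+m$. This replaces $\mu(\{\tau_U\ge n-m-k\})$ by $\mu(\{\tau_U\ge n-2k-2m\})$, and summing over $n$ then gives only $\E_\mu(\tau_U)\le C_1\bigl(\mu(U)^{-1}+k+m\bigr)$. Hence you need $k\mu(U)=\mathcal O(1)$ in the \emph{upper} bound: your decay estimate $\mu(U)\le K\eta^k$ is an input there too, and ingredients (i)--(iii) alone do not carry the upper bound over.

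\textbf{Lower bound.} The $(1+o(1))$ taken from \cite[Theorem 3.7]{Saeed} is not uniform in $U$. After summation it leaves an additive constant $C'=C'(U)$ as in \eqref{inequality: proof of lower bound for E_mu}. Bounding $\mu(U)(m+k)$ says nothing about the size of $C'\mu(U)$, so your $c$ is not shown to be independent of $U$ and $\delta$.

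The detour is unnecessary. You can simply reindex, $\sum_{n\ge k+m}\mu(\{\tau_U\ge n-m-k\})=1+\E_\mu(\tau_U)$. With Kac's lemma this gives $\mu(U)^{-1}\le k+m+(1+\psi(m))\bigl(1+\E_\mu(\tau_U)\bigr)$, which needs only that $(k+m)\mu(U)$ be small. More simply still, use invariance: $\mu(\{\tau_U\le n\})\le\sum_{j=1}^n\mu(\sigma^{-j}U)=n\mu(U)$, so $\E_\mu(\tau_U)\ge\sum_{0\le n<1/\mu(U)}\bigl(1-n\mu(U)\bigr)\ge\frac{1}{2\mu(U)}$ for every measurable $U$ of positive measure, with no mixing and no decay estimate.

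So the lower bound is the easy direction, and the place you flag as the main obstacle is not where the metric enters. The metric enters through the uniform short-return estimate for $U$ and the $k\mu(U)$ term in the upper bound.
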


\begin{proof}
The proof is analogous to the proof given in Theorem \ref{Lemma: upper bound for E(hitting time)}. In fact, since $\mathcal{U}_\delta^{\alpha}$ satisfies the $\psi$-mixing property (as explained in Remark \ref{Remark: thida}), the upper bound follows. For the lower bound, it suffices to show that $\mu(U)= \mathcal{O}(\eta(k))$ where $U$ is an element in $\mathcal{U}_\delta^{\alpha}$ and $\eta(k)$ is an exponentially decreasing to zero as $k\to 0$. To this end, let $U=(i_0,\ldots,i_{k-1})_{\mathcal{V}_\delta}^{\alpha}\in \mathcal{U}_\delta^{\alpha}$. We use the following inequality, which was obtained in \eqref{calculating_mu}:
$\mu(U)\le K \prod_{j=0}^{k-1} \#V_{i_j}^\alpha (\min_{n\in V_{i_j}^\alpha} (n)+1)^{-\kappa}.$ Now, since in the proof of Proposition \ref{Lemma:Totally bounded in Full shift d_2} the quantity of the sets $V_{i_j}^\alpha$ vary with respect to $\gamma$, we examine $\mu(U)$ in two cases $\alpha\le\gamma$ and $\alpha>\gamma$.

First, we consider $\alpha>\gamma$. In this case, since we have $N-1$ singletons and one infinite neighbourhood, we consider three cases. First, all the $V_{i_j}^\alpha$ are finite (singletons). Second, all the $V_{i_j}^\alpha$ are infinite. Finally, the third case is a combination of the previous two, namely, some are finite, and others are infinite.

 In first case, as all the $V_{i_j}^\alpha$ are singleton, we have 
$$ \#V_{i_j}^\alpha (\min_{n\in V_{i_j}^\alpha} (n)+1)^{-\kappa}=(\min_{n\in V_{i_j}^\alpha} (n)+1)^{-\kappa}\le 2^{-\kappa}.$$
Hence, 
$\mu(U)\le K \prod_{j=0}^{k-1} \#V_{i_j}^\alpha (\min_{n\in V_{i_j}^\alpha} (n)+1)^{-\kappa}\le K (2^{-\kappa})^k,$
 which tends to zero exponentially as $k\to \infty$. 
 For the case all the $V_{i_j}^\alpha$ are infinite i.e. for all $1\le j\le k-1$, $V_{i_j}^\alpha=\mathcal{N}_{\rho_2}(N)$, by the analogous method in \eqref{Ineq: Case Infin. of Proof of lwbd of E(hit)}, we see that $\mu(U)=\mathcal{O}(\eta_1(k))$, where $\eta_1(k)$ tends to zero as $k\to\infty$.
 For the third case, if we consider the number of finite $V_{i_j}^\alpha$ is $k'$ for some $1<k'<k$, so there are $k-k'$ infinite $V_{i_j}^\alpha$ such that all are $\mathcal{N}_{\rho_2}(N)$. In this case, we use the same method in \eqref{calculating_mu} to conclude
 \begin{align*}
    \mu(U)&\le K \prod_{j=0}^{k-1} \sum_{u_j\in V_{i_j}^\alpha} (u_j+1)^{-\kappa}\le K \prod_{j=0}^{k'-1} \#V_{i_j}^\alpha (\min_{n\in V_{i_j}^\alpha} (n)+1)^{-\kappa} \prod_{j=k'}^{k-1}\sum_{u_j= N+1} (u_j)^{-\kappa}\\& \le K (2^{-\kappa})^k \eta_1(k),
\end{align*}
 and if we consider $\eta_2(k):=(2^{-\kappa})^k \eta_1(k)$, then $\eta_2(k)\to 0$ exponentially as $k\to 0$. So, in the case $\alpha>\gamma$, for all $U\in \mathcal{U}_\delta^{\alpha}$, $\mu(U)=\mathcal{O}(\eta(k))$, where $\eta(k)$ has exponential behaviour decaying to 0. 

 Now, if we consider the second case, where $\alpha\le\gamma$, the only difference from the previous case is that there are some finite $V_{i_j}^\alpha$ that are not singletons. If we show that, when all $V_{i_j}^\alpha$ are finite, $\mu(U)$ decreases exponentially to zero, then the other cases follow as in Case $\alpha>\gamma$. Therefore, we assume that all $V_{i_j}^\alpha$ are finite and not singletons. Here, we recall that the number of elements in $V_{i_j}^\alpha$ is a constant depending on $\alpha$, which we denote here by $C_j(\alpha)$, and we denote the minimal element by $N-C_{\alpha,j}$ where $C_{\alpha,j}$ is a constant. Hence,
$$\mu(U)\le K \prod_{j=0}^{k-1} \#V_{i_j}^\alpha (\min_{n\in V_{i_j}^\alpha} (n)+1)^{-\kappa}\le K \prod_{j=0}^{k-1} C_j(\alpha)(N-C_{\alpha,j}+1)^{-\kappa}<1.$$
So, by considering $\eta_3(k):=\prod_{j=0}^{k-1} C_j(\alpha)(N-C_{\alpha,j}+1)^{-\kappa}$, $\eta_3(k)\to 0$ exponentially as $k\to \infty$. The other cases can be established analogously to Case $\alpha>\gamma$.

The remainder of the proof proceeds as in Theorem \ref{Lemma: upper bound for E(hitting time)}. Hence, the proof of the theorem is complete.
\end{proof}



\subsection{Proof of Theorem \ref{Theorem: main theorem 2}} We now turn to the proof of Theorem \ref{Theorem: main theorem 2}, which follows a similar approach to that of Theorem \ref{Theorem: main theorem}. 

\begin{proof}[ Proof of Theorem \ref{Theorem: main theorem 2}] The idea of the proof is entirely analogous to that of Theorem \ref{Theorem: main theorem}. The lower bound follows in the same way. For the upper bound, by Proposition \ref{Lemma:Totally bounded in Full shift d_2}, there exists a finite open cover $\U_{\delta}^{\alpha}$ for $(\Sigma,\sigma)$ with $$\#(\U_{\delta}^{\alpha})= \left\lfloor\left( \frac{1}{\alpha} \log \left(\frac{1}{\delta}\right)+\mathcal{O}_\alpha(1)\right) ^ { \log_\theta{\delta e^\alpha(1-\theta)}}\right\rfloor,$$
that is a subpartition. Therefore, by a similar argument, and using Lemma \ref{Lemma:subpartition}, we obtain the following inequality,
  $$ \E_\mu(\tau_\delta)\le  C\left(\left\lceil\frac{\log(\delta e^\alpha (1-\theta))}{\log\theta}\right\rceil+\max_{1\le i_0,\ldots,i_{k-1}\le K_{\delta}^{\alpha}}\E_\mu\left(\tau_{(i_0,\ldots,i_{k-1})_{\mathcal{V}_\delta^{\alpha}}}\right)\right)\left( \sum_{k=1}^{\#({\U_{\delta}^{\alpha}})}\frac{1}{k}\right).$$
We now obtain the following by the upper bound of Theorem \ref{Lemma: upper bound for E(hitting time) d_2},
  \begin{align}\label{Inequality: upper bound for E in the proof d_2+}
   \notag\E_\mu(\tau_\delta)&\le C\left(\left\lceil\frac{\log(\delta e^\alpha (1-\theta))}{\log\theta}\right\rceil+\max_{1\le i_0,\ldots,i_{k-1}\le K_{\delta}^{\alpha}}\frac{1}{\mu\left((i_0,\ldots,i_{k-1})_{\mathcal{V}_\delta^{\alpha}}\right)}\right)\log \left(\#(\mathcal{U}_\delta^{\alpha}) \right)\\& \le C\left(\left\lceil\frac{\log(\delta e^\alpha (1-\theta))}{\log\theta}\right\rceil+\frac{1}{M_{\mu}(\delta/2T)}\right)\log \left(\#({\U_\delta^{\alpha}}) \right).
   \end{align}
We now proceed to compute $\log \left(\#(\mathcal{U}_\delta^{\alpha}) \right)$,
\begin{align*}
    \log \left(\#(\mathcal{U}_\delta^{\alpha}) \right)&=\log \left(\left(\frac{1}{\alpha}\log\frac{1}{\delta}+\mathcal{O}_\alpha(1)\right) ^ { \frac{\log(\delta e^\alpha(1-\theta))}{\log(\theta)}}\right) \\&=\frac{\log\delta+\alpha+\log(1-\theta)}{\log \theta}\left(\log\frac{1}{\alpha}+\log\left(\log\frac{1}{\delta}\right)\right)+\frac{\log\delta+\alpha+\log(1-\theta)}{\log \theta}\left(\frac{\mathcal{O}_\alpha(1)}{\frac{1}{\alpha}\log\frac{1}{\delta}}\right) \\&\ 
    =C\left(\log\left(\frac{1}{\delta}\right)\log\left(\log\frac{1}{\delta}\right)+\alpha\mathcal{O}_\alpha(1)\right),
\end{align*}

for some constant $C>0$. Now, by using this in \eqref{Inequality: upper bound for E in the proof d_2+}, as $$\left\lceil\frac{\log(\delta e^\alpha (1-\theta))}{\log(\theta)}\right\rceil\log\left(\frac{1}{\delta}\right)\log\left(\log\frac{1}{\delta}\right)=\mathcal{O}\left(  \left(\log\frac{1}{\delta}\right)^2\left(\log\log\frac{1}{\delta}\right) \right),$$ we finally have,

$$\E_\mu(\tau_\delta)\le \frac{C}{M_{\mu}(\delta/2T)} \left(\log\frac{1}{\delta}\right)\left(\log\log\frac{1}{\delta}\right), $$
for some $C>0$.
\end{proof}


\section{Example}\label{Section: Examples}
In this section, we examine an example that illustrates the result we have obtained.
\textbf{Full shift with Bernoulli measure.} As an application of our results, we consider the full shift $(\Sigma,\sigma)$ equipped with the Bernoulli measure, which is defined as $\mu([x_0,\ldots,x_{n-1}])=\prod_{j=0}^{n-1} 2^{-x_j}$. This corresponds to the potential $\phi\big|_{[n]}=-n\log 2$ on $\Sigma$.  This system is also studied in \cite[Proposition 3.1]{Zhao} for a different metric. Our goal is to apply Theorem \ref{Theorem: main theorem} and Theorem \ref{Theorem: main theorem 2}. We begin with the metric $d_1$.

So, considering the metric $d_1$ which is defined in \eqref{Def:Metric d}. Since the Bernoulli measure is an example of a Gibbs measure, by Proposition \ref{Lemma:Totally bounded in Full shift}, for all $\delta>0$, we can construct $\U_\delta$ the same way here. So, by the definition of $(i_0,\ldots,i_{k-1})_{\mathcal{V}_{\delta}}\in \U_\delta$ and Bernoulli measure, we have 
\begin{align}\label{Eq: measure of U by Bernoulli}
 \notag \mu((i_0,\ldots,i_{k-1})_{\mathcal{V}_{\delta}})&=\mu\left(\bigcup_{u_j\in V_{i_j},\ 0\le j\le k-1} [u_0,\ldots,u_{k-1}]\right)= \sum_{u_j\in V_{i_j},\ 0\le j\le k-1}\mu([u_0,\ldots,u_{k-1}])\\&= \sum_{u_j\in V_{i_j},\ 0\le j\le k-1} \prod_{j=0}^{k-1} 2^{-u_j}.  
\end{align}
Now, we can find $\min_{(i_0,\ldots,i_{k-1})_{\mathcal{V}_{\delta}}\in \U_\delta}\mu((i_0,\ldots,i_{k-1})_{\mathcal{V}_{\delta}})$, which is obtained for $(i_0,\ldots,i_{k-1})_{\mathcal{V}_{\delta}}$ where for all $0\le j\le k-1$, $V_{i_j}=\mathcal{N}_{\rho_1}(N)$, which are defined in \eqref{Countable set of N}, where $N$ satisfies $\frac{1}{N}\le\delta$, and as $\delta<\frac{1}{N-1}$, we obtain $N=\left\lceil\frac{1}{\delta}\right\rceil$ . Then, by \eqref{Eq: measure of U by Bernoulli} for $(i_0,\ldots,i_{k-1})_{\mathcal{V}_{\delta}}$ \st  for all $0\le j\le k-1$, $V_{i_j}=\mathcal{N}_{\rho_1}(N)$ we have,
\begin{equation*}\label{approx: mu of N-tail in example}
    \mu((i_0,\ldots,i_{k-1})_{\mathcal{V}_{\delta}})=  \sum_{u_j\ge N}  2^{-ku_j}=\frac{\left(2^k\right)^{-N}}{1-2^{-k}}.
\end{equation*}
Recall that $k=\left\lceil \frac{\log(\delta(1-\theta))}{\log(\theta)}\right\rceil= B\log\left(\frac{1}{\delta}\right)+\mathcal{O}(1)$ for $\U_\delta$, where $B>0$ is a constant. As we have $\frac{1}{1-2^{-k}}=1+o(1)$, we conclude that 
$$\min_{(i_0,\ldots,i_{k-1})_{\mathcal{V}_{\delta}}\in \U_\delta}\mu((i_0,\ldots,i_{k-1})_{\mathcal{V}_{\delta}})= B2^{-\frac{1}{\delta}\log\frac{1}{\delta}}(1+o(1)).$$

Using property (2) of Proposition \ref{Lemma:Totally bounded in Full shift}, there exists $1< T<\infty$ \st  
$$ M_\mu(\delta)=\min_{\ulx\in \Sigma}\mu(B_{\delta}(\ulx))\le \min_{(i_0,\ldots,i_{k-1})_{\mathcal{V}_{\delta}}} \mu((i_0,\ldots,i_{k-1})_{\mathcal{V}_{\delta}})\le \min_{\ulx\in \Sigma}\mu(B_{T\delta}(\ulx))=M_\mu(T\delta).$$
Note that for each $r>0$, the infinite $\delta$-neighbourhood in $\U_{r\delta}$ is $\mathcal{N}_\rho(N)$ where $N$ is satisfied in $\frac{1}{N}\le r\delta$ and $k=\left\lceil \frac{\log(r\delta(1-\theta))}{\log(\theta)}\right\rceil $, which implies \begin{equation}\label{approx: min on epsilon}
\min_{(i_0,\ldots,i_{k-1})_{\mathcal{V}_{r\delta}}\in \U_{r\delta}}\mu((i_0,\ldots,i_{k-1})_{\mathcal{V}_{r\delta}})= B2^{-\frac{1}{r\delta}\log\frac{1}{\delta}}(1+o(1)).\end{equation}

In particular, if we consider $\varepsilon= \frac{1}{2T}$, for $U\in\U_{\frac{\varepsilon\delta}{T}}$, by \eqref{approx: min on epsilon}, we have $\min_{ U\in\U_{\frac{\varepsilon\delta}{T}}} \mu(U)= B2^{-\frac{T}{\varepsilon\delta}\log\frac{1}{\delta}}(1+o(1))$, and by property (2) of Proposition \ref{Lemma:Totally bounded in Full shift} for $\U_{\frac{\varepsilon\delta}{T}}$, we get $$ B2^{-\frac{T}{\varepsilon\delta}\log\frac{1}{\delta}}(1+o(1)) \le M_\mu\left(\varepsilon\delta\right) .$$
On the other hand, for $\U_{\frac{\delta}{\varepsilon}}$  by \eqref{approx: min on epsilon}, $\min_{U\in\U_{\frac{\delta}{\varepsilon}}}\mu(U)=  B2^{-\frac{\varepsilon}{\delta}\log\frac{1}{\delta}}(1+o(1))$, and then
$$ M_\mu\left(\frac{\delta}{\varepsilon}\right)\le B2^{-\frac{\varepsilon}{\delta}\log\frac{1}{\delta}}(1+o(1)).$$
Now, by Theorem \ref{Theorem: main theorem}, 
\begin{equation} \label{Result for example d_1}
    c 2^{\frac{\varepsilon}{\delta}\log\frac{1}{\delta}}\le \E_\mu(\tau_\delta)\le C 2^{\frac{T}{\varepsilon\delta}\log\frac{1}{\delta}} \left(\log\frac{1}{\delta}\right)^2,
\end{equation} 
for some $0<c<C<\infty$.

Now, we consider the metric $d_2$ as described in \eqref{Def:Metric d_2}. Let $\alpha>0$. Recall that, by Proposition \ref{Lemma:Totally bounded in Full shift d_2}, we have a finite open cover $\U_\delta^\alpha$ that includes an infinite $\delta$-neihbourhood i.e. $\mathcal{N}_{\rho_2}(N)$ which $N$ satisfies in $\left\lceil\frac{1}{\alpha}\log\frac{1}{\delta}\right\rceil= N$. Using an analogous argument as above for $d_1$, as in this case, $k=\left\lceil \frac{\log(e^\alpha\delta(1-\theta))}{\log(\theta)}\right\rceil=D\log\left(\frac{1}{\delta}\right)+\mathcal{O}(1)$ for some constant $D>0$, we then obtain
$$\min_{(i_0,\ldots,i_{k-1})_{\mathcal{V}_{\delta}^\alpha}\in \U_\delta^\alpha}\mu((i_0,\ldots,i_{k-1})_{\mathcal{V}_{\delta}^\alpha})=  D2^{-\left(\frac{1}{\alpha}\log\frac{1}{\delta}\right)\log\frac{1}{\delta}}(1+o(1)).$$

Note that for each $r>0$, we have $$ 
\min_{(i_0,\ldots,i_{k-1})_{\mathcal{V}_{r\delta}^\alpha}\in \U_{r\delta}^\alpha}\mu((i_0,\ldots,i_{k-1})_{\mathcal{V}_{r\delta}^\alpha})= D2^{-\left(\frac{1}{\alpha}\log\frac{1}{r\delta}\right)\log\frac{1}{\delta}}(1+o(1)).$$

Now, considering $\varepsilon=\frac{1}{2T}$ for $U\in\U_{\frac{\varepsilon\delta}{T}}^\alpha$ we have $\min_{ U\in\U_{\frac{\varepsilon\delta}{T}}^\alpha} \mu(U)= D2^{-\left(\frac{1}{\alpha}\log\frac{T}{\varepsilon\delta}\right)\log\frac{1}{\delta}}(1+o(1))$ and then $$D2^{-\left(\frac{1}{\alpha}\log\frac{T}{\varepsilon\delta}\right)\log\frac{1}{\delta}}(1+o(1))\le M_\mu\left(\varepsilon\delta\right).$$
Similarly for $\U_{\frac{\delta}{\varepsilon}}^\alpha$, we have $\min_{U\in\U_{\frac{\delta}{\varepsilon}}^\alpha}\mu(U)= D2^{-\left(\frac{1}{\alpha}\log\frac{\varepsilon}{\delta}\right)\log\frac{1}{\delta}}(1+o(1))$, and then
$$  M_\mu\left(\frac{\delta}{\varepsilon}\right)\le  D2^{-\left(\frac{1}{\alpha}\log\frac{\varepsilon}{\delta}\right)\log\frac{1}{\delta}}(1+o(1)).$$
 Hence, by Theorem \ref{Theorem: main theorem 2}, 
 \begin{equation}\label{Result for example d_2}
 c2^{\frac{1}{\alpha}\log\left(\frac{T}{\varepsilon\delta}\right)\log\frac{1}{\delta}}\le \E_{\mu}(\tau_\delta(\ulx))\le C 2^{\frac{1}{\alpha}\log\left(\frac{\varepsilon}{\delta}\right)\log\frac{1}{\delta}} \left(\log \frac{1}{\delta}\right)\left(\log\log \frac{1}{\delta}\right),\end{equation} 

for some $0<c<C<\infty$.

 As we mentioned earlier in Remark \ref{Remark: Conclusion after Main Thm}(2), in comparison to the results obtained for this example in \eqref{Result for example d_1} and \eqref{Result for example d_2} and those derived in \cite[Example 7.4]{MikeNatali}, where they found $2^{\sqrt{\frac{1}{c\delta}}}\le \E_{\mu}(\tau_\delta) $ on the interval map, this system can be covered faster than it is with respect to the metrics $d_1$ and $d_2$. Additionally, this system with respect to the metric $d_2$ has a faster covering time than with respect to $d_1$.


 \end{document}